\documentclass[12pt]{amsart}
\usepackage{inputenc, amsmath,amssymb,graphicx,verbatim, bbm, mathabx}
\usepackage[hidelinks]{hyperref}

\usepackage{amsthm}
\usepackage{tikz-cd, tikz, tikz-3dplot}
\usetikzlibrary{calc}
\usepackage{mathtools}
\usepackage{soul, scalerel}
\usepackage{multirow}
\usepackage[margin=1.25in]{geometry}

\newtheorem{theorem}{Theorem}
\newtheorem{lemma}[theorem]{Lemma}
\newtheorem{proposition}[theorem]{Proposition}
\newtheorem{corollary}[theorem]{Corollary}
\theoremstyle{definition}
\newtheorem{definition}[theorem]{Definition}

\theoremstyle{remark}
\newtheorem{remark}[theorem]{Remark}

\begin{document}
\title{Remarks on the Geometry of Sets in $\mathbb{Z}^d$ with Small Fourier $L^1$ Norm}

\author[Burgin]{Alex Burgin}
    \address{School of Mathematics, Georgia Institute of Technology, Atlanta GA 30332, USA}
    \email{aburgin6@gatech.edu}

\author[Mao]{Junzhe Mao}
    \address{School of Mathematics, Georgia Institute of Technology, Atlanta GA 30332, USA}
    \email{jmao87@gatech.edu}

\keywords{Inverse Littlewood conjecture, Wiener norm, decoupling, Freiman isomorphism, additive energy}

\subjclass[2020]{Primary 11P70; Secondary 11B30, 42A05}

\thanks{A.B. is supported in part by the Simons Foundation International through a Simons Dissertation Fellowship [SFI-MPS-SDF-00026777], the U.S. Department of Education through a GAANN fellowship, and the National Science Foundation [DMS-2247254]}

\begin{abstract}
    We study geometric inverse problems for finite sets $A\subset \mathbb{Z}^d$ whose Fourier $L^1$ norm, or Wiener norm, \begin{align*}
        \|A\|:=\int_{\mathbb{T}^d}\Big|\sum_{a\in A}e(a\cdot t)\Big|dt
    \end{align*} is subpolynomial in $|A|$. In particular, we show a variety of geometric phenomena are incompatible with small Fourier $L^1$ norm. Our principal application concerns spherical Freiman models. Suppose $\|A\|=|A|^{o(1)}$ and $D\subseteq A$ is Freiman isomorphic to a set $S$ of lattice points on a sphere; then, $|D|/|A|$ must be polynomially small in $|A|$. Moreover, if $|D|/|A|$ is not too small, almost all of $S$ lies in a small number of spherical caps, and almost all caps have rich additive structure. The proof uses a new mechanism for studying additive structure across subsets of $A$, along with decoupling results of Bourgain-Demeter.
\end{abstract}
\maketitle 

\tableofcontents 

\section{Introduction and motivation}\label{sec:intro}

Let $f: \mathbb{Z}^d\rightarrow \mathbb{C}$ be a compactly supported function on the integer lattice, and define the Fourier $L^1$ norm (also known as the Wiener norm, or the Fourier algebra norm) of $f$ by \begin{align*}
    \|f\|:=\int_{\mathbb{T}^d}|\widehat{f}(\xi)|d\xi,\quad \widehat{f}(\xi):=\sum_{m\in \mathbb{Z}^d}f(m)e(m\cdot \xi).
\end{align*} We are mostly interested in the case where $f=\mathbf{1}_A$, the indicator function for a finite set on the integer lattice. In such a case, we simply write $\|A\|$ for the associated norm. Via Cauchy-Schwarz and Parseval's identity, it is clear that $\|A\|\leq |A|^{1/2}$. As for the lower bound, Littlewood \cite{hardyLittlewood} conjectured that for any finite set $A\subseteq \mathbb Z$, \begin{align}\label{eq:intro:1}
    \|A\|\gg \log |A|.
\end{align} 
This was proved independently by McGehee-Pigno-Smith \cite{MPS} and Konyagin \cite{konyaginLittlewood}. Note that \eqref{eq:intro:1} is sharp up to the implied constant by taking $A$ to be an arithmetic progression.
One can also consider lower bounds for sets in higher dimensions $d\geq 2$. It is clear that some sort of ``multidimensional structure" must be taken into account; by considering a product set $A=E_1\times \cdots \times E_d$, one observes that by applying (\ref{eq:intro:1}) to each coordinate one can obtain the better bound $\|A\|\gg \prod_{i=1}^d \log |E_i|$. Whereas if a set in $\mathbb{Z}^d$ is zero on all coordinates other than the first, for example, the bound (\ref{eq:intro:1}) is sharp. One may view the former as being a ``genuinely $d$-dimensional" set, while the latter seems more one-dimensional; this idea was explored by Petridis \cite{Giorgis_2013} and Hanson \cite{Hanson_2021}. They provided lower bounds for $\|A\|$ in terms of rows (intersections of $A$ with lines) and slices (intersections of $A$ with planes).

In light of these lower bounds, one may ask what the extremizers are: that is, which sets $A$ realize a smaller-than-expected value of $\|A\|$? As mentioned above, an arithmetic progression realizes the bound (\ref{eq:intro:1}). Moreover, by H\"older's inequality and a bit of interpolation, a small value of $\|A\|$ forces $A$ to have a large additive energy; here we recall the additive energy of $A$ is defined as 
\begin{align*}
    E_2(A):=\#\{(a_1,a_2,a_3,a_4)\in A^4:a_1+a_2=a_3+a_4\}.
\end{align*} 
This inverse problem was studied recently by Bloom and Green \cite{Bloom-Green_2025}, who showed that, if $\|A\|\leq K\log |A|$, then $A$ contains arbitrarily long arithmetic progressions as $|A|\rightarrow\infty$, for fixed $K>0$. Some related results were used by Bedert \cite{bedert2025largesumfreesubsetssets}, which led to a breakthrough in sum-free subsets.

We are interested in a larger class of sets than those affected by these previous inverse results. In particular, we show that for $A\subset \mathbb{Z}^d$ with the condition $ \|A\|=|A|^{o(1)}$, one may still discern interesting geometric information about $A$.

High-dimensional geometry has played an important role in arithmetic combinatorics for many years. Of particular note is the work of Behrend \cite{behrend}, who used the geometry of the sphere in high dimensions to provide a construction of a set $B\subset [N]$ that avoids all nontrivial 3-term arithmetic progressions (is `3-AP free'), yet still satisfies $|B|\geq \frac{N}{\exp(C\log^{1/2}N)}$ for some constant $C>0$. This set takes the form $B=\{\sum_{i=0}^{n-1}a_i(2b)^i:a_i\in \{0,1,\ldots,b-1\},\sum_{i=0}^{n-1}a_i^2=k\}$ for $b = b(N), k=k(N)$, which is a projection of the sphere into the integers. Since the sphere is 3-AP free, and the projection is Freiman-isomorphic\footnote{Two sets are Freiman-isomorphic if they share similar additive structure: see Definition \ref{def:Freiman}.} to the sphere, one obtains that $B$ is 3-AP free. On the flip side, Freiman's theorem uses the image of a box or a centered convex body to characterize sets with rich additive structure. We refer to~\cite{TaoVu_2009} for more examples. These examples emphasize that the geometry of a high-dimensional additive model--be it curved, convex, or flat--encodes substantial information about the underlying set.

Let us highlight some classical results connecting geometry and the Fourier $L^1$ norm. For a set $A\subset \mathbb{R}^d$ and a scaling factor $R>0$, let $RA$ denote the scaled set \begin{align*}
    RA:=\{x\in \mathbb{R}^d:x/R\in A\}
\end{align*} and let $A_R$ denote the set of lattice points inside $RA$, that is, \begin{align*}
    A_R:=\{y\in \mathbb{Z}^d:y/R\in A\}.
\end{align*} Regarding centrally symmetric convex sets, Yudin \cite{yudin_1975} showed the following.

\begin{theorem}[Yudin, \cite{yudin_1975}]\label{thm:yudin}
    Let $d\geq 2$ and $K\subset \mathbb{R}^d$ be a centrally symmetric convex body with bounded Minkowski surface area. Then, \begin{align*}
        \|K_R\|\ll_{d,K} R^{\frac{d-1}{2}}.
    \end{align*}
\end{theorem}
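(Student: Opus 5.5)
The plan is to bound $\|K_R\|=\int_{\mathbb{T}^d}|\widehat{\mathbf 1_{K_R}}(\xi)|\,d\xi$ by comparing the lattice exponential sum with the continuous Fourier transform of $RK$ through a smoothed Poisson summation, and then applying the standard decay of $\widehat{\mathbf 1_K}$. The key input is the classical estimate for a convex body with bounded surface area:
\begin{align*}
|\widehat{\mathbf 1_K}(\eta)|\ll_{d,K}|\eta|^{-1}\,\sigma_K(\eta/|\eta|),
\end{align*}
where $\sigma_K(\omega)$ controls the $(d-1)$-dimensional measure of hyperplane sections orthogonal to $\omega$ near the boundary. Integrated over directions, $\sigma_K$ behaves like $|\eta|^{-(d-1)/2}$ in an averaged (L$^1$ over the sphere) sense. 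This averaged decay, rather than a pointwise one, is what will be needed, because the bound must hold for every centrally symmetric convex body, including polytopes.

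\textbf{Step 1: Smoothing.} Fix a scale $\delta=R^{-1}$ and a bump $\phi$ supported in the unit ball. Set $F=\mathbf 1_{RK}*\phi_\delta$ with $\phi_\delta$ at unit scale, and compare $\mathbf 1_{K_R}$ with $F|_{\mathbb{Z}^d}$. The error is supported on lattice points within distance $O(1)$ of $\partial(RK)$, which number $\ll R^{d-1}$. That is too large for a trivial $\ell^2$ bound, since it would give $R^{(d-1)/2}$ only if the error were controlled in $\ell^2$. Indeed, $\|g\|\le|\operatorname{supp} g|^{1/2}\|g\|_{\ell^2}$ gives an error of $\ll R^{(d-1)/2}$ for a bounded error function. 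So a boundary-layer error of this form is acceptable: $\|h\|\le\|h\|_{\ell^2}\cdot 1$ by Cauchy--Schwarz on $\mathbb{T}^d$, and $\|h\|_{\ell^2}\le (\#\text{boundary points})^{1/2}\ll R^{(d-1)/2}$.

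\textbf{Step 2: The smooth main term.} By Poisson summation, $\widehat{F|_{\mathbb{Z}^d}}(\xi)=\sum_{k\in\mathbb{Z}^d}\widehat F(\xi+k)$, so $\|F|_{\mathbb{Z}^d}\|\le\int_{\mathbb{R}^d}|\widehat F|$. Here $\widehat F(\eta)=R^d\widehat{\mathbf 1_K}(R\eta)\widehat\phi(\eta)$, and $\widehat\phi$ decays rapidly beyond $|\eta|\gg1$. Changing variables gives
\begin{align*}
\int_{\mathbb{R}^d}|\widehat F|\ll\int_{|\zeta|\lesssim R}|\widehat{\mathbf 1_K}(\zeta)|\,d\zeta+\text{tail}.
\end{align*}
Passing to polar coordinates, it remains to show
\begin{align*}
\int_{S^{d-1}}\int_1^{R}|\widehat{\mathbf 1_K}(r\omega)|\,r^{d-1}\,dr\,d\omega\ll R^{(d-1)/2}.
\end{align*}
This requires the spherical average $\int_{S^{d-1}}|\widehat{\mathbf 1_K}(r\omega)|\,d\omega\ll r^{-(d+1)/2}$, which yields $\int_1^R r^{(d-3)/2}\,dr\asymp R^{(d-1)/2}$.

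\textbf{Step 3: The averaged decay estimate (main obstacle).} The averaged decay estimate is the expected main obstacle, and it is where central symmetry and bounded surface area enter. I would prove the $L^2$ spherical average bound $\int_{S^{d-1}}|\widehat{\mathbf 1_K}(r\omega)|^2\,d\omega\ll r^{-d-1}$, which is Podkorytov's theorem, and then apply Cauchy--Schwarz on the sphere to obtain the required $L^1$ average. To prove it, I would use the divergence theorem to write
\begin{align*}
\widehat{\mathbf 1_K}(\eta)=\frac{c}{|\eta|}\int_{\partial K}\frac{\eta}{|\eta|}\cdot n(x)\,e(x\cdot\eta)\,d\sigma(x).
\end{align*}
I would then average $|\cdot|^2$ over directions, using the decay of $\widehat{d\sigma_{S^{d-1}}}$ to control the double integral over $\partial K\times\partial K$ by the measure of pairs of boundary points within distance $1/r$. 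Convexity and bounded surface area bound this measure by $r^{-(d-1)}$. Central symmetry is used only to ensure the estimate is uniform and that no degenerate lower-dimensional faces dominate; if that direction proves delicate, I would instead cite the known Brandolini--Podkorytov bound directly.
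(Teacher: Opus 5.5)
The paper does not prove this statement; it is quoted from Yudin. Your argument therefore has to stand on its own.

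\textbf{What is sound.} Steps 1 and 2 are correct. The boundary-layer error is handled exactly by $\|h\|\le \|h\|_{\ell^2}\ll R^{(d-1)/2}$. Poisson summation then reduces the smooth part to $\int_{\mathbb{R}^d}|\widehat{\mathbf{1}_K}(\zeta)|\,|\widehat{\phi}(\zeta/R)|\,d\zeta$.

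\textbf{The gap is the self-contained proof you propose in Step 3.} After the divergence theorem, the kernel $\int_{S^{d-1}}(\omega\cdot n(x))(\omega\cdot n(y))\,e(r\omega\cdot(x-y))\,d\omega$ decays only like $(r|x-y|)^{-(d-1)/2}$. So it does not localize the double integral over $\partial K\times\partial K$ to pairs with $|x-y|\le C/r$. Taking absolute values, for fixed $x$ you get
$\int_{\partial K}\min\{1,(r|x-y|)^{-(d-1)/2}\}\,d\sigma(y)\asymp r^{-(d-1)/2}$,
not $r^{-(d-1)}$. This is dominated by pairs at distance $\asymp 1$, where the amplitude $(\hat z\cdot n(x))(\hat z\cdot n(y))$ with $\hat z=(x-y)/|x-y|$ is of size one.

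Consequently the sketch gives only $\int_{S^{d-1}}|\widehat{\mathbf{1}_K}(r\omega)|^2\,d\omega\ll r^{-(d+3)/2}$. Fed through Step 2, this yields $\|K_R\|\ll R^{3(d-1)/4}$ rather than $R^{(d-1)/2}$. Reaching $r^{-(d+1)}$ would require exploiting the oscillation of $e(\pm r|x-y|)$ across $\partial K\times\partial K$. Your sketch does not do this, and it is delicate for non-smooth boundaries such as polytopes. As written, the proof is complete only through your fallback: citing the spherical $L^2$-average theorem, due to Podkorytov in the plane and Brandolini--Rigoli--Travaglini for convex bodies in general dimension.

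\textbf{A cheaper repair removes Step 3 entirely.} Step 2 only uses an integral over $r$, so no pointwise-in-$r$ spherical estimate is needed.
It suffices to show $\int_{2^j\le|\zeta|<2^{j+1}}|\widehat{\mathbf{1}_K}|\ll 2^{j(d-1)/2}$. By Cauchy--Schwarz on the annulus, this follows from $\int_{2^j\le|\zeta|<2^{j+1}}|\widehat{\mathbf{1}_K}|^2\ll 2^{-j}$.

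To prove that, take a Littlewood--Paley kernel $\psi$ with $\int\psi=0$ and $|\widehat\psi|\ge 1$ on $1\le|\zeta|\le 2$. By Plancherel the annular integral is at most $\|\mathbf{1}_K*\psi_{2^{-j}}\|_{L^2}^2$. Since $|\mathbf{1}_K*\psi_{2^{-j}}(x)|\ll_N (1+2^j\,\mathrm{dist}(x,\partial K))^{-N}$, this is bounded by $|\{x:\mathrm{dist}(x,\partial K)\le 2^{-j}\}|$ plus rapidly decaying tails. The bounded Minkowski surface area hypothesis then gives $\ll 2^{-j}$.

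Summing over $2^j\le R$, and using the Schwartz decay of $\widehat\phi(\zeta/R)$ beyond that range, gives $R^{(d-1)/2}$. This is the same Plancherel boundary-layer mechanism as your Step 1, and it uses exactly the Minkowski hypothesis in the statement. It also shows central symmetry plays no role. Your remark that symmetry keeps degenerate faces from dominating is not borne out: neither argument uses symmetry.
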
 In particular, since $|K_R|\asymp R^d$, this gives an upper bound on $\|K_R\|$ that is a power-savings over the trivial bound $R^{\frac{d}{2}}$. Specializing to polyhedra, one may get an essentially optimal result, due to Belinsky~\cite{belinsky_1977}:

\begin{theorem}[Belinsky, \cite{belinsky_1977}]
    For any bounded, convex $d$-dimensional polyhedron $W\subset \mathbb R^d$, one has that \begin{align*}
        \|W_R\|\asymp_{d,W} \log^d R.
    \end{align*}
\end{theorem}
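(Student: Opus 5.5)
The plan is to prove the two inequalities separately, using a triangulation of $W$ into simplices and the one-dimensional Dirichlet kernel as the model. The model computation is the box $[0,R]^d$. Here $W_R$ is a product of $d$ intervals of length about $R$, so its Fourier transform factors as a product of Dirichlet kernels. Since $\int_{\mathbb{T}}|D_R| \asymp \log R$, this gives $\|W_R\|\asymp \log^d R$. For a general polyhedron the upper bound comes from decomposition and the lower bound from a product-type estimate.

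\emph{Upper bound.} First triangulate $W$ into finitely many closed simplices, without adding new vertices. By inclusion--exclusion, $\mathbf{1}_{W_R}$ becomes a signed sum, with boundedly many terms depending only on $W$, of indicators of lattice points in $R\sigma$. Here $\sigma$ ranges over the simplices and their faces of all dimensions. Since $\|\cdot\|$ is a norm, it suffices to show $\|(R\sigma)\cap\mathbb{Z}^d\|\ll \log^{k} R\le \log^d R$ for each $k$-simplex $\sigma$.

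I will do this by induction on dimension, slicing $\sigma$ along a coordinate direction. The set $(R\sigma)\cap\mathbb{Z}^d$ is a union over $x_1$ of fibres. Each fibre is the set of lattice points in a $(k-1)$-dimensional simplex whose vertices move affinely in $x_1$. The difficulty is that the vertices of $R\sigma$ need not be rational. If they are irrational, the floor functions coming from the facets break the direct product structure. To handle this, I would write the fibre endpoints as $\lfloor \alpha x_1+\beta\rfloor$ and bound the resulting exponential sums $\sum_{x_1} e(x_1\xi_1)\,\frac{e(\lfloor\alpha x_1+\beta\rfloor \xi_2)-1}{e(\xi_2)-1}$ in $L^1$ by $\log^2 R$. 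This two-dimensional ``trapezoid'' estimate is the heart of the matter. One way to get it is to expand $\lfloor\alpha x+\beta\rfloor=\alpha x+\beta-\{\alpha x+\beta\}$ and use the Fourier series of the sawtooth, truncated at height $R$. The truncation error must be controlled in $L^1$, which needs Koksma-type discrepancy bounds. I expect this to be the main obstacle, since $\alpha$ can be badly approximable. If the paper allows it, I would instead reduce to the case where $W$ has rational vertices, or quote Belinsky for general $W$.

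\emph{Lower bound.} Choose a small cube $Q\subset W$. Then $\|W_R\|\ge \|Q_R\|\cdot c$ fails in general, because the norm is not monotone in the set. So instead I will test $\widehat{\mathbf 1_{W_R}}$ against $\widehat{\mathbf 1_{Q_R}}$'s phase. Concretely, $\|f\|\ge |\langle \widehat f, g\rangle|/\|g\|_\infty$, and I take $g$ to be a product of one-dimensional conjugate-Dirichlet-type test functions $\prod_i \operatorname{sgn}$-like trigonometric polynomials, as in the McGehee--Pigno--Smith lower bound for each coordinate. This should yield $\gg\log^d R$. Alternatively, if the paper only needs the upper bound, the lower bound can also follow from the Petridis/Hanson slice bounds. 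Those give $\|A\|\gg\prod$ over a nested family of $\gg R$-sized slices in each of $d$ independent directions.
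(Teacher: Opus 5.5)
The paper does not prove this statement: it is quoted from Belinsky as background. So there is no internal argument to compare with, and your proposal has to stand on its own. It does not yet do so; both directions have a genuine gap.

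\textbf{Upper bound.} The reduction via triangulation and inclusion--exclusion is fine, but the essential estimate is never proved.

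\emph{Irrational vertices.} You defer this case (``reduce to rational vertices, or quote Belinsky''). The theorem covers every polyhedron, and no perturbation argument is available: replacing $W$ by a nearby rational polytope changes $W_R$ by roughly $\epsilon R^d$ lattice points, and bounding the norm of that difference is the same problem. Your sawtooth/Koksma route cannot give a constant depending only on $W$. The truncation error is governed by how many $x\le R$ have $\|\alpha x+\beta\|_{\mathbb T}\lesssim 1/H$, i.e.\ by $R\,D_R(\alpha)$. This is $O(\log R)$ for badly approximable $\alpha$, which is the easy case, not the hard one. For Liouville-type $\alpha$ it is as large as $R^{1-o(1)}$ along a subsequence of $R$.

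\emph{Higher dimensions.} Even for rational simplices the induction on dimension does not close. We have $\widehat{\mathbf 1_{(R\sigma)\cap\mathbb Z^d}}(\xi_1,\xi')=\sum_{x_1}e(x_1\xi_1)\widehat{\mathbf 1_{F_{x_1}}}(\xi')$. A bound $\|F_{x_1}\|\ll\log^{k-1}R$ for each fibre separately says nothing about the one-dimensional Wiener norm of the family $x_1\mapsto\widehat{\mathbf 1_{F_{x_1}}}(\xi')$; the triangle inequality loses a factor $R$. For $k\ge 3$ you would need a genuinely $k$-dimensional closed form for the simplex sum (for instance a vertex/partial-fraction expansion), together with an $L^1$ estimate exploiting cancellation along its singular hyperplanes. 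That is where the work in the cited result lies, and you only sketch the planar trapezoid.

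\textbf{Lower bound.} A product test function does not work for a general polyhedron. Take $g(\xi)=\prod_i h(\xi_i)$, with $h$ a modulated bounded sine sum whose coefficients are proportional to $1/k$ ($0<|k|\le M$) around $v$. Then $\langle\widehat{\mathbf 1_{W_R}},g\rangle$ is a constant times $\sum\prod_i(x_i-v_i)^{-1}$ over $x\in W_R$ with all $x_i\neq v_i$. The $2^d$ orthants around $v$ enter with signs $\epsilon_1\cdots\epsilon_d$. In logarithmic coordinates each orthant contributes the log-volume of its intersection with the tangent cone at $v$. The $(\log R)^d$ terms cancel unless that cone has faces along coordinate directions. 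In $d=2$, a coordinate ray interior to the cone makes its two adjacent orthants cancel, and a cone meeting no axis gives only $O(\log R)$. For a box the corner is an orthant and you get $\log^d R$. For a polygon all of whose edges have irrational slopes, no choice of centre or lattice coordinates gives a nonzero $\log^2R$ term with such test functions.

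What works is fibrewise rather than product. Let $A_n$ be the slice $x_1=n$. For fixed $\xi'$, apply the McGehee--Pigno--Smith inequality $\int_{\mathbb T}|\sum_k c_ke(n_kt)|\,dt\gg\sum_k|c_k|/k$ (with $n_1<n_2<\cdots$) in $\xi_1$, taking $c_k=\widehat{\mathbf 1_{A_{n_k}}}(\xi')$. Integrating in $\xi'$ gives $\|A\|\gg\sum_k\|A_{n_k}\|/k$. Then induct on $d$. Use a unimodular change of variables in which a vertex of $W$ is the unique minimiser of $x_1$; the slice at depth $k\le cR$ is then a translate of a dilate by about $k$ of a fixed $(d-1)$-polytope. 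State the inductive hypothesis uniformly over translates. This gives $\|W_R\|\gg\sum_{k\le cR}(\log k)^{d-1}/k\asymp\log^dR$. Your appeal to Petridis/Hanson points in this direction, but you neither state the inequality you would use nor check that it yields $\log^dR$.
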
 However, the upper bound given by Yudin is sharp, as can be seen with the so-called spherical Dirichlet kernel.

\begin{theorem}
    Let $B$ be the unit ball in $\mathbb{R}^d$, $d\geq 2$. Then, \begin{align*}
        \|{B_R}\|\asymp_d R^{\frac{d-1}{2}}.
    \end{align*}
\end{theorem}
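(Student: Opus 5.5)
Since the unit ball $B$ is a centrally symmetric convex body with smooth boundary, hence of bounded Minkowski surface area, the upper bound $\|B_R\|\ll_d R^{\frac{d-1}{2}}$ follows directly from Theorem~\ref{thm:yudin}. It remains to prove the matching lower bound $\|B_R\|\gg_d R^{\frac{d-1}{2}}$. The natural route is to compare the spherical Dirichlet kernel $D_R(\xi)=\sum_{m\in\mathbb{Z}^d,|m|\le R}e(m\cdot\xi)$ with its continuous analogue, the Fourier transform of the indicator of the ball of radius $R$. That transform is explicitly
\begin{align*}
\widehat{\mathbf 1_{RB}}(\xi)=R^{d}\,\widehat{\mathbf 1_B}(R\xi)=c_d\,R^{d/2}|\xi|^{-d/2}J_{d/2}(2\pi R|\xi|).
\end{align*}
The strategy is to show that $D_R(\xi)$ agrees with this expression, up to an acceptable error, on an annulus $\{c/R\le|\xi|\le c'\}$ with small $c'$. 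The $L^1$ norm of the main term over that annulus then gives the lower bound.

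\textbf{Main-term computation.} Using the Bessel asymptotic $J_\nu(r)=\sqrt{2/(\pi r)}\cos(r-\nu\pi/2-\pi/4)+O(r^{-3/2})$, we have on $1/R\lesssim|\xi|\le c'$
\begin{align*}
|\widehat{\mathbf 1_{RB}}(\xi)|\asymp R^{\frac{d-1}{2}}|\xi|^{-\frac{d+1}{2}}\,|\cos(2\pi R|\xi|-\phi_d)|+O\!\big(R^{\frac{d-3}{2}}|\xi|^{-\frac{d+3}{2}}\big).
\end{align*}
Integrating in polar coordinates, the cosine has average $2/\pi$ over each period of length $1/R$ in $|\xi|$, so
\begin{align*}
\int_{1/R\le|\xi|\le c'}|\widehat{\mathbf 1_{RB}}|\gg R^{\frac{d-1}{2}}\int_{1/R}^{c'}r^{-\frac{d+1}{2}}r^{d-1}dr = R^{\frac{d-1}{2}}\int_{1/R}^{c'}r^{\frac{d-3}{2}}dr\gg_d R^{\frac{d-1}{2}}.
\end{align*}
This holds for all $d\ge2$; for $d=2$ we even gain a factor of $\log R$ before the error is accounted for, and for $d\ge 3$ the mass sits at $|\xi|\asymp1$. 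The error term contributes $\ll R^{\frac{d-3}{2}}\int_{1/R}^{c'} r^{\frac{d-5}{2}}dr$, which is $O(R^{\frac{d-1}{2}}\cdot(\text{small}))$ once the lower cutoff is taken to be $C/R$ with $C$ large.

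\textbf{Main obstacle: passing from $\widehat{\mathbf 1_{RB}}$ to $D_R$.} By Poisson summation, $D_R(\xi)=\sum_{k\in\mathbb{Z}^d}\widehat{\mathbf 1_{RB}}(\xi+k)$ in a suitably summed sense. This series is only conditionally convergent, and the lattice-point discrepancy for the ball is genuinely of size $R^{d-2}$ or larger, so the comparison cannot be made pointwise naively. I would avoid a pointwise comparison and use duality instead. Fix a smooth test function $\psi$ supported in the annulus $\{C/R\le|\xi|\le c'\}\subset(-1/2,1/2)^d$ with $\|\psi\|_\infty\le1$, chosen to approximate $\mathrm{sgn}\,\widehat{\mathbf 1_{RB}}$ there; concretely, take the sign of the cosine profile, mollified at scale $\epsilon/R$ in the radial variable. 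Then
\begin{align*}
\|B_R\|\ge\Big|\int_{\mathbb{T}^d}D_R\,\overline{\psi}\Big|=\Big|\sum_{|m|\le R}\check\psi(m)\Big|,
\end{align*}
and the same pairing against $\widehat{\mathbf 1_{RB}}$ on $\mathbb{R}^d$ equals $\int_{RB}\check\psi$. The difference between the lattice sum and the integral of $\check\psi$ over $RB$ is controlled by Poisson summation applied to $\check\psi\cdot\mathbf 1_{RB}$. The terms with $k\neq0$ involve $\widehat{\mathbf 1_{RB}}*\psi$ evaluated at $k$, and since $\psi$ is supported away from nonzero integers, these decay rapidly in $|k|$ thanks to the smoothness of $\psi$ and the decay of $\widehat{\mathbf 1_{RB}}$. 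The delicate point is to bound these terms by $o(R^{\frac{d-1}{2}})$ uniformly. The derivatives of $\psi$ are of size $(R/\epsilon)^j$, so an integration-by-parts count is needed, and that is where I expect most of the work. Once this is done, $\|B_R\|\ge(1-O(\epsilon))\int|\widehat{\mathbf 1_{RB}}|\,|\psi|-o(R^{\frac{d-1}{2}})\gg_d R^{\frac{d-1}{2}}$, which combined with Yudin's upper bound proves the claim.
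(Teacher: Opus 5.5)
Your upper bound via Theorem~\ref{thm:yudin} is fine, and it matches how the paper frames this statement. The paper offers the spherical Dirichlet kernel only as evidence that Yudin's bound is sharp, and it gives no argument for the lower bound, just a reference to \cite{IosevichLiflyand_2014}. Your main-term computation is also fine, apart from the aside on $d=2$: there $\int_{1/R}^{c'}r^{-1/2}\,dr=O(1)$, so no $\log R$ appears (that only happens for $d=1$).

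\textbf{The gap.} The problem is the transference step, and the reason you give for it is wrong. The $k\neq0$ terms are $(\widehat{\mathbf 1_{RB}}*\psi)(k)=\widehat{\mathbf 1_{RB}\check\psi}(k)$. That $\psi$ vanishes near the nonzero integers is irrelevant, because $\widehat{\mathbf 1_{RB}}*\psi$ does not.

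These terms do not decay rapidly in $|k|$. The function $\mathbf 1_{RB}\check\psi$ is not smooth across $|x|=R$, and $\check\psi$ is far from negligible there. Since $\psi$ oscillates like $\cos(2\pi R|\xi|-\phi_d)$, $\check\psi$ has size about $c'^{(d+1)/2}R^{-(d-1)/2}$ on the shell $R-|x|\asymp 1/c'$, which is exactly where $\int_{RB}\check\psi$ gets its size.

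Consequently:
\begin{itemize}
\item the terms decay only polynomially, and Poisson summation for $\mathbf 1_{RB}\check\psi$ need not converge absolutely;
\item the sum over $k\neq0$ is just the weighted lattice discrepancy $\sum_{|m|\le R}\check\psi(m)-\int_{RB}\check\psi$, rewritten;
\item no count of derivatives of $\psi$ helps, because the phases $R(|k-\eta|\mp|\eta|)$ in $\widehat{\mathbf 1_{RB}}(k-\eta)\psi(\eta)$ are stationary along whole segments of the line through $0$ and $k$.
\end{itemize}

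\textbf{A repair.} The missing idea is to treat the sharp boundary separately. Take $\psi$ radial and write $\mathbf 1_{RB}=\chi_R+(\mathbf 1_{RB}-\chi_R)$, where $\chi_R$ is smooth and radial, equals $1$ for $|x|\le R-1$, and vanishes for $|x|\ge R$.

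\emph{Smooth part.} $\widehat{\chi_R}(\zeta)$ is a smooth unit-scale average of $\widehat{\mathbf 1_{rB}}(\zeta)$ over $r\in[R-1,R]$. Hence $|\widehat{\chi_R}(\zeta)|\ll_N R^{(d-1)/2}|\zeta|^{-N}$ for $|\zeta|\ge 1/2$. So Poisson summation for $\chi_R\check\psi\in C_c^\infty$ converges absolutely, and the $k\neq0$ terms total $O(c'^{d}R^{(d-1)/2})$.

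\emph{Boundary layer.} On both the lattice side and the integral side, the layer is bounded trivially by $R^{d-1}\sup_{R-1\le|x|\le R}|\check\psi(x)|\ll c'^{(d+1)/2}R^{(d-1)/2}$. This uses $|\psi|\le 1$ and the $|y|^{-(d-1)/2}$ decay of the Fourier transform of surface measure on the sphere.

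Both errors are small multiples of the main term $\asymp c'^{(d-1)/2}R^{(d-1)/2}$. Fixing $c'$ small in terms of $d$, and then taking $R$ large, closes the argument.

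This gives an error that is a small constant times $R^{(d-1)/2}$, not the $o(R^{(d-1)/2})$ you aimed for. Getting $o(R^{(d-1)/2})$ would require a nontrivial lattice-point discrepancy bound for the ball, but the weaker error is all the lower bound needs.
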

This is a classical result in the study of partial sums of Fourier series. We refer to \cite{IosevichLiflyand_2014} for a proof and more discussions on related topics.
Comparing this result with the normal $d$-dimensional Dirichlet kernel, which is the Fourier transform of the lattice points in a dilated cube, it is clear that the geometry of the underlying set plays a decisive role in the asymptotics of the associated Fourier $L^1$ norm.

\subsection{Main results}

We are primarily interested in the ``small norm" regime where $\|A\|=|A|^{o(1)}$; in many cases, this gives enough control to be able to deduce detailed geometric information about the sets in question.

We specialize to spherical models, at least for the next few theorems, for a few reasons: not only is the geometry simple to understand, but there are also strong decoupling results available for appropriate partitions of a spherical lattice, and one has regularity in lattice-point counts (at least, for sufficiently high dimensions). This should not be interpreted as saying that the theory is specific only to spheres; our framework is actually quite general (we discuss further applications in \S\ref{sec:extensionsLimitations}). So, to this end, for $R>0$ let $\mathcal{F}_{d,R^2}:=\{x\in \mathbb{Z}^d:|x|^2=R^2\}$ be the set of lattice points on the sphere of radius $R$ in $\mathbb R^d$. Throughout this paper, we will assume that $R$ is a positive integer so that $\mathcal{F}_{d,R^2}$ is nonempty.

To pose our theorems, we require the concept of a Freiman isomorphism. Two sets are Freiman-isomorphic if they share similar additive structure (for example, one may take a suitable projection of a high-dimensional set into one dimension).

\begin{definition}[Freiman $h$-homomorphism]\label{def:Freiman} Let $D\subset G$ and $D'\subset G'$, where $G,G'$ are abelian groups. A map $\phi:D\rightarrow D'$ is a Freiman $h$-homomorphism if \begin{align*}
    d_1+\cdots +d_h=d_{h+1}+\cdot +d_{2h}\implies \phi(d_1)+\cdots +\phi(d_h)=\phi(d_{h+1})+\cdots +\phi(d_{2h})
\end{align*} for all $d_i\in D$. A Freiman $h$-isomorphism is a bijection in which both directions hold.
\end{definition}

\begin{definition}
    For a set $A\subset \mathbb{Z}^m$, we say that a nonempty subset $D\subset A$ is a \textit{d-dimensional spherical Freiman model} if it is Freiman-isomorphic (of some order $h\geq 2$) to a subset $S$ of the lattice sphere $\mathcal{F}_{d,R^2}$, in dimension $d\geq 2$.
\end{definition}

If $A$ admits a spherical Freiman model $D$ as a subset, write \begin{align*}
    \alpha:=\frac{|D|}{|A|},\quad \beta:=\frac{|S|}{|\mathcal{F}_{d,R^2}|}.
\end{align*}

Our first theorem shows that large spherical models inside $A$ are incompatible with a small Fourier $L^1$ norm.

\begin{theorem}\label{thm:iteratedDimensionReduction}
    Let $A\subset \mathbb{Z}^m$ be finite and nonempty. Suppose that $A$ contains a $d$-dimensional spherical Freiman model $D$. Then, for every $\eta\in (0,1)$ there are affine subspaces $P_1,\cdots, P_J\subset \mathbb{R}^d$, each of dimension at most one, such that \begin{align*}
        \Big|S\cap \bigcup_{j=1}^J P_j\Big|\geq (1-\eta)|S|,\quad J\leq C_d\Big(\eta^{-2}\frac{\|A\|^2}{\alpha}\Big)^{t_d},
    \end{align*} where $t_d:=2^{d-1}-1$. Consequently, \begin{align*}
        \|A\|\geq c_d |A|^{1/(2t_d)}\alpha^{(t_d+1)/(2t_d)}.
    \end{align*} Here, the constants $C_d,c_d>0$ depend only on $d$.
\end{theorem}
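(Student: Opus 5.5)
The plan is to turn smallness of $\|A\|$ into large additive energy for the subset $D$, carry that energy over to $S$ via the Freiman isomorphism, and then use the geometry of additive quadruples on a sphere to induct on the dimension $d$.

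\textbf{Step 1: energy of the subset.} Since $D\subseteq A$, Parseval gives $|D|=\int \widehat{\mathbf 1_A}\,\overline{\widehat{\mathbf 1_D}}$. Split $|\widehat{\mathbf 1_A}||\widehat{\mathbf 1_D}|=|\widehat{\mathbf 1_A}|^{1/2}\cdot|\widehat{\mathbf 1_A}|^{1/2}|\widehat{\mathbf 1_D}|$ and apply Cauchy--Schwarz, then Cauchy--Schwarz again together with Parseval for $A$. This yields
\begin{align*}
|D|\le \|A\|^{1/2}\Big(\int|\widehat{\mathbf 1_A}||\widehat{\mathbf 1_D}|^2\Big)^{1/2}\le \|A\|^{1/2}|A|^{1/4}E_2(D)^{1/4}.
\end{align*}
Rearranging, $E_2(D)\ge |D|^4/(\|A\|^2|A|)=\alpha|D|^3/\|A\|^2$. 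A Freiman isomorphism of order $h\ge2$ preserves additive quadruples, so $E_2(S)\ge \delta|S|^3$ with $\delta:=\alpha/\|A\|^2$. From here on $A$ plays no role, and the task is a purely geometric statement: a set $S\subseteq\mathcal F_{d,R^2}$ with $E_2(S)\ge\delta|S|^3$ is, up to $\eta|S|$ points, covered by $C_d(\eta^{-2}\delta^{-1})^{t_d}$ sets of dimension $\le1$.

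\textbf{Step 2: geometry of quadruples.} Suppose $a+b=c+d=s$ with all four points on the sphere. Then $a\cdot s=|a|^2+a\cdot b=|s|^2/2$, and likewise for $b,c,d$. Hence, for $s\neq0$, all four points lie on the slice $\{x: x\cdot s=|s|^2/2\}\cap\{|x|=R\}$, which is a $(d-1)$-dimensional sphere. For $s=0$ the pairs are antipodal, which contributes only $O(|S|^2)$ quadruples.

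\textbf{Step 3: base case and induction.} In dimension $2$ each slice meets the circle in at most two points, so $E_2(S)\le C|S|^2$. Therefore $|S|\le C\delta^{-1}$, and single points ($J=|S|$) give the claim with $t_2=1$.

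For the inductive step, write $E_2(S)=\sum_s r(s)^2$ with $r(s)$ the number of representations of $s$. I would run a greedy extraction with a dyadic pigeonhole in $r(s)$.
\begin{itemize}
\item While the uncovered part $S'$ has $|S'|\ge\eta|S|$, its energy is still at least roughly $\delta\eta^{3}|S'|^3$ up to constants. Pigeonholing finds a slice $H_s$ with $|S'\cap H_s|\gtrsim \delta\eta^{O(1)}|S|$.
\item Moreover, $S'\cap H_s$ inherits relative energy about $\delta\eta^{O(1)}$. This is the key point and needs care: every quadruple counted in $r(s)^2$ lies inside $H_s$, so the energy of $S'\cap H_s$ is at least $r(s)^2$.
\item Apply the $(d-1)$-dimensional statement to $S'\cap H_s$, a set on a lattice-point sphere after an affine change of coordinates; this is harmless for the combinatorial argument. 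Then remove the covered points and repeat.
\end{itemize}
Each round covers a fraction $\gtrsim\delta\eta^{O(1)}$ of $S$, so there are at most about $(\eta^{2}\delta)^{-1}$ rounds. Each round contributes about $(\eta^{-2}\delta^{-1})^{t_{d-1}}$ pieces, so the exponents satisfy $t_d=2t_{d-1}+1$ once the loss in the relative energy of the slice is tracked. This matches $t_d=2^{d-1}-1$.

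\textbf{Main obstacle.} The delicate step is precisely this bookkeeping: the slice's energy density must degrade by only a factor of about $\delta$, not worse, so that the recursion closes with $t_d=2t_{d-1}+1$, and the $\eta$-losses must stay bounded by $\eta^{-2}$. I would first try choosing $s$ to maximize $r(s)$ and use $r(s)^2\le E_2(S\cap H_s)$. Alternatively, one can invoke Bourgain--Demeter decoupling for caps to control the energy across slices.

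\textbf{Consequence.} Take $\eta=1/2$. A line meets the sphere in at most two points, so $|S|/2\le 2J\le C_d(\|A\|^2/\alpha)^{t_d}$. Since $|S|=|D|=\alpha|A|$, solving for $\|A\|$ gives $\|A\|\ge c_d|A|^{1/(2t_d)}\alpha^{(t_d+1)/(2t_d)}$.
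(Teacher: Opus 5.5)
\textbf{There is a genuine gap, and it sits exactly where you flag the main obstacle.} Your Step~1 inequality is correct. The trouble is the reduction to a statement whose only hypothesis is the global bound $E_2(S)\ge\delta|S|^3$: it discards what the iteration needs.

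First, global energy is not inherited by subsets. The energy may sit entirely in the part of $S$ you have already covered; compare Sanders' example, recalled in the remark after the definition of hereditary energy. So nothing justifies $E_2(S')\gtrsim\delta\eta^3|S'|^3$ for the uncovered remainder $S'$.

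Second, the slice step fails for a more basic reason. A single sum $s$ has $r(s)\le|S'\cap H_s|$. Hence the bound $E_2(S'\cap H_s)\ge r(s)^2$ can certify a relative density $E_2/|S'\cap H_s|^3$ of at most $1/|S'\cap H_s|$, which tends to $0$ as $|S|$ grows. It is never of the form $\delta^{O(1)}$, so a $(d-1)$-dimensional statement phrased in terms of relative energy density cannot be applied. The recursion $t_d=2t_{d-1}+1$ is therefore asserted rather than derived. Decoupling is not the missing input either: the paper proves this theorem using only the trivial partition.

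\textbf{The missing idea is heredity, and you already have it.} Your Step~1 computation works verbatim with $\mathbf 1_Y$ in place of $\mathbf 1_D$ for any $Y\subseteq D$. This gives $E_2(Y)\ge|Y|^4/\Lambda$ for \emph{every} $Y\subseteq S$, where $\Lambda:=|A|\|A\|^2$. This is the paper's bound $\mathrm{Cap}_2(X)\le|A|\|A\|^2$ for all $X\subseteq S$, from Lemmas~\ref{lem:hedCapacity} and~\ref{lem:capacityTransfer}. So keep $A$ in play and normalize absolutely rather than by density:
\begin{itemize}
\item Suppose $m\ge\eta|S|/2$ points of a piece in $P=c+W$ (with $c\in W^\perp$) remain uncovered. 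Pass to the larger ``hemisphere'' as in Lemma~\ref{lem:richFibers}, so that the maximizing sum is not $2c$. Your $O(|S|^2)$ remark about antipodal sums no longer suffices, since $m^4/\Lambda$ can be smaller than $m^2$.
\item This gives a slice $X$ in a proper hyperplane of $P$ with $|X|\ge m^2/(4\Lambda)$, hence $\Lambda/|X|\lesssim\eta^{-2}(\Lambda/|S|)^2$.
\item Greedy removal takes $O(\Lambda/(\eta|S|))$ rounds.
\item Induct on the statement that all but $\eta|X|$ of $X$ is covered by $C(\eta^{-2}\Lambda/|X|)^{t_k}$ lines. This closes with $t_d=2t_{d-1}+1$, because $\Lambda/|S|=\|A\|^2/\alpha$.
\end{itemize}
This is essentially the paper's argument. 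The paper runs the same greedy level by level and gets $T_{k+1}\le QT_k^2$ from $\sum_i\mathrm{Cap}_2(X_{k,i})^{1/2}\le|A|^{1/2}\|A\|T_k$. Your closing deduction of the lower bound on $\|A\|$ from the covering is fine.
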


The theorem has the following inverse formulation.

\begin{corollary}[Large spherical models are forbidden by subpolynomial norm] Under the hypotheses of Theorem \ref{thm:iteratedDimensionReduction}, if $\|A\|=|A|^{o(1)}$, then \begin{align*}
    |D|\leq |A|^{1-2^{-(d-1)}+o(1)}.
\end{align*}
    
\end{corollary}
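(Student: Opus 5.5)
The corollary is a direct algebraic consequence of the lower bound at the end of Theorem \ref{thm:iteratedDimensionReduction}, so I would just rearrange that inequality for $\alpha$.

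Start from
\begin{align*}
\|A\|\geq c_d |A|^{1/(2t_d)}\alpha^{(t_d+1)/(2t_d)},
\end{align*}
raise both sides to the power $2t_d/(t_d+1)$, and solve for $\alpha$:
\begin{align*}
\alpha\leq c_d^{-2t_d/(t_d+1)}\,\|A\|^{2t_d/(t_d+1)}\,|A|^{-1/(t_d+1)}.
\end{align*}

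Now insert $\|A\|=|A|^{o(1)}$. Since $d$ is fixed, $t_d$ is fixed, so $\|A\|^{2t_d/(t_d+1)}=|A|^{o(1)}$. The constant $c_d^{-2t_d/(t_d+1)}$ depends only on $d$, so it is also $|A|^{o(1)}$ as $|A|\to\infty$. For bounded $|A|$ the statement holds trivially by adjusting the $o(1)$ term.

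Multiplying by $|A|$ gives $|D|=\alpha|A|\leq |A|^{1-1/(t_d+1)+o(1)}$. Since $t_d+1=2^{d-1}$, this is exactly $|D|\leq |A|^{1-2^{-(d-1)}+o(1)}$.

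The only points to watch are the bookkeeping of exponents and that the $o(1)$ is uniform in the fixed dimension $d$. All the substantive work lies in Theorem \ref{thm:iteratedDimensionReduction}, which the paper states earlier and we may assume.
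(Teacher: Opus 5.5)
Your proposal is correct and is exactly the intended argument. The paper states this corollary as the ``inverse formulation'' of Theorem \ref{thm:iteratedDimensionReduction} without a separate proof, and that inverse formulation is precisely your rearrangement: $\alpha \ll_d \|A\|^{2t_d/(t_d+1)}|A|^{-1/(t_d+1)}$, then $|D|=\alpha|A|$ together with $t_d+1=2^{d-1}$ and $d$ fixed.
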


One may also get a result with potentially sharper bounds if $S$ encompasses more of the sphere, or if $D$ is an even sparser subset of $A$; as a trade-off, one has less local information about the distribution of the points.

\begin{theorem}\label{thm:global}
    Let $d\geq 5$ and $A\subset \mathbb{Z}^m$ be finite and nonempty. Suppose that $A$ contains a $d$-dimensional spherical Freiman model $D$. Then, for every $\epsilon>0$, 
    \begin{align*}
        \|A\|\gg_{d,\epsilon} |A|^{\frac{1}{d-2}-\epsilon}\alpha^{\frac{d}{2(d-2)}+\epsilon}\beta^{\frac{d-4}{2(d-2)}+\epsilon}.
    \end{align*}
\end{theorem}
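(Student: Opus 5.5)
The plan is a duality argument: test $\mathbf 1_A$ against $\mathbf 1_D$ on the Fourier side, move to the spherical model $S$ using the Freiman isomorphism, and bound the resulting exponential sum over $S$ with the Bourgain--Demeter discrete restriction theorem for the sphere.

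\textbf{Step 1: duality and H\"older.} Since $D\subseteq A$, Parseval gives
\begin{align*}
|D|=\langle \mathbf 1_A,\mathbf 1_D\rangle=\int_{\mathbb T^m}\widehat{\mathbf 1_A}\,\overline{\widehat{\mathbf 1_D}}.
\end{align*}
Take $p\ge 2$ with dual exponent $q=p'$. H\"older gives $|D|\le \|\widehat{\mathbf 1_A}\|_{q}\|\widehat{\mathbf 1_D}\|_{p}$. The trivial bound $|\widehat{\mathbf 1_A}|\le |A|$ then gives
\begin{align*}
\|\widehat{\mathbf 1_A}\|_q^q\le |A|^{q-1}\|A\|, \qquad\text{so}\qquad \|A\|\ \ge\ |D|^{q}\,|A|^{1-q}\,\|\widehat{\mathbf 1_D}\|_p^{-q}.
\end{align*}
This reduces the problem to an upper bound on $\|\widehat{\mathbf 1_D}\|_p$.

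\textbf{Step 2: transfer to the sphere.} For $p=2k$ with $k$ an integer, $\|\widehat{\mathbf 1_D}\|_{2k}^{2k}=E_k(D)$ counts solutions of $d_1+\dots+d_k=d_{k+1}+\dots+d_{2k}$. A Freiman isomorphism of order $h\ge k$ preserves this count, so $E_k(D)=E_k(S)=\|\widehat{\mathbf 1_S}\|_{L^{2k}(\mathbb T^d)}^{2k}$. For non-even $p$, I would interpolate (log-convexity of $L^p$ norms) between $p=2$, where the norms agree trivially, and the largest even exponent $2k\le 2h$. The model is guaranteed only order $h\ge2$, so I will first try to run the whole argument at $p=4$. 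If the exponents do not come out right, I will instead assume the order is large enough to reach the exponent chosen in Step 4.

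\textbf{Step 3: decoupling on the sphere.} Apply Bourgain--Demeter's discrete restriction for $\mathcal F_{d,R^2}$. For $d\ge5$ and every $\epsilon>0$, it gives
\begin{align*}
\Big\|\sum_{x\in S}e(x\cdot t)\Big\|_{L^p(\mathbb T^d)}\ll_{d,\epsilon} R^{\epsilon}\big(1+R^{\frac{d-2}{2}-\frac dp}\big)|S|^{1/2}.
\end{align*}
The point is that this holds for arbitrary subsets $S$, which is where the $\ell^2$ factor $|S|^{1/2}$ comes from. Then I substitute $|S|=\beta|\mathcal F_{d,R^2}|\asymp\beta R^{d-2}$, which is valid for $d\ge5$ by lattice-point regularity, and $|D|=|S|=\alpha|A|$ into Step 1.

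\textbf{Step 4: optimise, and the expected obstacle.} After substitution the lower bound for $\|A\|$ is a product of powers of $|A|$, $\alpha$, $\beta$ and $R$. I will choose $p$ so that the power of $R$ cancels, or, failing that, eliminate $R$ using $R^{d-2}\asymp|S|/\beta=\alpha|A|/\beta$. The target exponents $\tfrac1{d-2}$, $\tfrac{d}{2(d-2)}$, $\tfrac{d-4}{2(d-2)}$ suggest working near the critical exponent $p=\tfrac{2d}{d-2}$, where $\tfrac{d-2}{2}-\tfrac dp=0$. That choice should give $\|A\|\gtrsim |A|^{1/(d-2)}$ after using $|S|\le R^{d-2}$ to absorb the remaining factors. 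The $\epsilon$-losses come from $R^\epsilon$, which is at most $|A|^{O(\epsilon)}$ once $R$ is expressed through $|A|,\alpha,\beta$.

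The main obstacle is Step 2. The critical exponent $\tfrac{2d}{d-2}$ is not an even integer, and Freiman order $2$ only controls $E_2$. I would first check whether the $p=4$ computation already gives the stated exponents. If not, I will interpolate between $E_2(S)$ and the decoupling bound at the critical exponent, applied directly to $S$, so that only additive quantities of order $2$ need to transfer from $D$ to $S$.
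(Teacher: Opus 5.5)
There is a genuine gap, and it is in Step 1, not in Step 2. You bound $\|\widehat{\mathbf 1_A}\|_q$ by interpolating between $L^1$ and $L^\infty$, and this loses a power of $\|A\|$. At $p=4$ it gives $\|\widehat{\mathbf 1_A}\|_{4/3}\le \|A\|^{3/4}|A|^{1/4}$. Combine this with $E_2(D)=E_2(S)\ll_{d,\epsilon} R^{d-4+\epsilon}|S|^2$, which is what (\ref{eq:globalExtension}) gives with $a=\mathbf 1_S$ and which needs only Freiman order $2$. You then get only
\[
\|A\|^{3}\gg_{d,\epsilon} |S|^2|A|^{-1}R^{-(d-4)-\epsilon}.
\]
Substituting $|S|=\alpha|A|$ and $R^{d-2}\asymp_d \alpha|A|/\beta$ gives $\|A\|\gg |A|^{\frac{2}{3(d-2)}-\epsilon}\alpha^{\frac{d}{3(d-2)}+\epsilon}\beta^{\frac{d-4}{3(d-2)}+\epsilon}$. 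This is the $2/3$ power of the claimed bound. Since $\|A\|\ge 1$ always, the theorem has content only when its right-hand side exceeds $1$, and in exactly that regime your bound is strictly weaker. So your ``$p=4$ check'' fails.

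Your fallback does not rescue the argument, for three reasons.
\begin{itemize}
\item The Bourgain--Demeter estimate $\ll R^{\frac{d-2}{2}-\frac dp+\epsilon}\|a\|_{\ell^2}$ is proved only for $p\ge \frac{2(d-1)}{d-3}$. That range contains $p=4$ when $d\ge 5$, but not $\frac{2d}{d-2}$. Your Step 3 at the critical exponent is therefore the open discrete restriction conjecture, not a theorem.
\item Only even moments $E_k$ pass through a Freiman isomorphism, and you may not assume order larger than $2$.
\item Interpolating $\|\widehat{\mathbf 1_D}\|_p$ between $p=2$ and $p=4$ inside your Step 1 does not help. For every $p\in[2,4]$ it yields a lower bound strictly smaller than the claimed one whenever the claimed one exceeds $1$.
\end{itemize}

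The missing idea is to use Parseval rather than the sup bound on the $A$ side:
\[
\|\widehat{\mathbf 1_A}\|_{4/3}\le \|\widehat{\mathbf 1_A}\|_1^{1/2}\|\widehat{\mathbf 1_A}\|_2^{1/2}=\|A\|^{1/2}|A|^{1/4}.
\]
This is Lemma \ref{lem:hedCapacity}, and it gives $E_2(D)\ge |D|^4/(|A|\|A\|^2)$. With it, your $p=4$ computation gives $\|A\|^2\gg_{d,\epsilon} \alpha^2|A|R^{-(d-4)-\epsilon}$. The same substitution then produces exactly the exponents $\frac1{d-2}$, $\frac{d}{2(d-2)}$ and $\frac{d-4}{2(d-2)}$. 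This is the paper's proof, phrased there as a lower bound for $\mathrm{Cap}_2(S)$ via the uniform measure on $S$, combined with Lemma \ref{lem:capacityTransfer}.
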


\begin{corollary}
    Under the hypotheses of Theorem \ref{thm:global}, if $\|A\|=|A|^{o(1)}$, then \begin{align*}
        \alpha^d\beta^{d-4}\ll_{d,\epsilon} |A|^{-2+o(1)}.
    \end{align*} In particular, either $D$ is a very sparse subset of $A$, or $S$ is a very sparse subset of $\mathcal{F}_{d,R^2}$.
\end{corollary}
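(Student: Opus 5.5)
The statement to prove is the last corollary, and it should follow directly from Theorem \ref{thm:global} by rearranging exponents. The plan is to take the lower bound of Theorem \ref{thm:global}, substitute the hypothesis $\|A\|=|A|^{o(1)}$, and solve for $\alpha^d\beta^{d-4}$. The only care needed is in handling the $\epsilon$ losses in the exponents of $\alpha,\beta$, and in absorbing them into the $|A|^{o(1)}$ term.

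Fix $\epsilon>0$ small. Theorem \ref{thm:global} together with the hypothesis gives
\begin{align*}
|A|^{o(1)}\gg_{d,\epsilon} |A|^{\frac{1}{d-2}-\epsilon}\,\alpha^{\frac{d}{2(d-2)}+\epsilon}\,\beta^{\frac{d-4}{2(d-2)}+\epsilon}.
\end{align*}
Since $0<\alpha,\beta\le 1$, we have $\alpha^{\epsilon}\le 1$ and $\beta^{\epsilon}\le1$. These factors sit on the small side of the inequality, so we cannot simply drop them; the bound reads
\begin{align*}
\alpha^{\frac{d}{2(d-2)}}\beta^{\frac{d-4}{2(d-2)}}\ll_{d,\epsilon} |A|^{-\frac1{d-2}+\epsilon+o(1)}\,\alpha^{-\epsilon}\beta^{-\epsilon}.
\end{align*}
To control the factor $\alpha^{-\epsilon}\beta^{-\epsilon}$ we need a priori polynomial lower bounds on $\alpha$ and $\beta$ in terms of $|A|$.

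For $\alpha$ this is immediate: $|D|\ge 1$, so $\alpha\ge |A|^{-1}$. For $\beta$, use $|S|=|D|\ge1$ and $|\mathcal F_{d,R^2}|\ll_d R^{d-2+o(1)}$ for $d\ge5$. We then need $R$ to be polynomially bounded in $|A|$. Freiman isomorphism alone does not control $R$, so this step has to be handled differently, and I expect it to be the main (minor) obstacle.

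The first option is to reduce to $R$ minimal. Alternatively, one can note that the proof of Theorem \ref{thm:global} presumably gives the bound with $\epsilon$ appearing only through a factor like $R^{-\epsilon}$ or $|\mathcal F|^{\epsilon}$. If that fails, I would interpret the corollary's $o(1)$ as permitting dependence through $\epsilon$: choose $\epsilon=\epsilon(|A|)\to0$ slowly, or state the corollary in the regime where $\beta\ge |A|^{-O(1)}$, which is the meaningful one. Given these bounds, $\alpha^{-\epsilon}\beta^{-\epsilon}\le |A|^{O(\epsilon)}$.

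Raising both sides to the power $2(d-2)$ gives
\begin{align*}
\alpha^d\beta^{d-4}\ll_{d,\epsilon}|A|^{-2+O(\epsilon)+o(1)}.
\end{align*}
Since $\epsilon$ is arbitrary, this is the claimed bound $\alpha^d\beta^{d-4}\ll|A|^{-2+o(1)}$. The concluding dichotomy is then just the observation that $\alpha$ and $\beta$ cannot both exceed $|A|^{-1/(d-2)-\delta}$ for any fixed $\delta>0$.
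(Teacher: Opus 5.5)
Your argument is the one the paper intends: it states the corollary as a direct rearrangement of Theorem \ref{thm:global} without further comment. You also correctly notice a point the paper passes over. After substituting $\|A\|=|A|^{o(1)}$ and raising to the power $2(d-2)$, the factors $\alpha^{2(d-2)\epsilon}\beta^{2(d-2)\epsilon}$ sit on the small side, so you need $\alpha,\beta\ge|A|^{-O(1)}$.

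For $\alpha$ this is fine. For $\beta$ the step is left open, and none of your options closes it:
\begin{itemize}
\item There is no ``reduction to minimal $R$''. $S$ and $R$ are given, and $S$ may be a few points on a sphere of enormous radius, so $\beta$ is not bounded below by any power of $|A|$.
\item The $R^{-\epsilon}$ loss from \eqref{eq:globalExtension} does not help. Since $R^{d-2}\asymp_d|S|/\beta$, it is again a negative power of $\beta$.
\item Taking $\epsilon=\epsilon(|A|)\to0$ fails, because $\beta$ can be smaller than any function of $|A|$. The $\epsilon$-dependence of the implied constants is also unquantified.
\item Imposing $\beta\ge|A|^{-O(1)}$ as a hypothesis changes the statement.
\end{itemize}

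The missing observation is that this restriction costs nothing, because the conclusion is only an upper bound and $d-4\ge1$. If $\beta\le|A|^{-2/(d-4)}$, then $\alpha^d\beta^{d-4}\le\beta^{d-4}\le|A|^{-2}$ and there is nothing to prove. Otherwise $\beta^{-1}<|A|^{2/(d-4)}$. Together with $\alpha^{-1}\le|A|$ and Theorem \ref{thm:global}, this gives
\begin{align*}
\alpha^d\beta^{d-4}=\alpha^{-2(d-2)\epsilon}\beta^{-2(d-2)\epsilon}\cdot\alpha^{d+2(d-2)\epsilon}\beta^{d-4+2(d-2)\epsilon}\ll_{d,\epsilon}|A|^{-2+O_d(\epsilon)+o(1)},
\end{align*}
which is what you want.

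Separately, your final dichotomy has a sign slip. Since $\alpha^d\beta^{d-4}\ge\min(\alpha,\beta)^{2d-4}$, what follows is $\min(\alpha,\beta)\le|A|^{-1/(d-2)+o(1)}$. So $\alpha$ and $\beta$ cannot both exceed $|A|^{-1/(d-2)+\delta}$. With $-\delta$ the claim does not follow, since $\alpha=\beta=|A|^{-1/(d-2)}$ is consistent with $\alpha^d\beta^{d-4}\le|A|^{-2}$.
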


Besides these density results, one may also deduce finer details about the distribution of $S$ on $\mathcal{F}_{d,R^2}$. More precisely,
for $R^{-1}\leq \sigma\leq c_d$, we partition $\mathcal{F}_{d,R^2}$ into $\asymp_d \sigma^{-(d-1)}$ many standard $\sigma$-caps, meaning the lattice partition induced by the usual tangential $\sigma$-scale decomposition of the unit sphere (for more detailed information, see Appendix A). For $S\subseteq \mathcal{F}_{d,R^2}$, write $S_\theta$ for the intersection of $S$ with the cap indexed by $\theta$. This partition is in a form amenable to the decoupling results of \cite{BourgainDemeter2015}. Our next result shows that a large portion of $S$ can be covered by a small number of $\sigma$-caps if $\|A\|$ is small.

\begin{theorem}[Concentration on spherical caps]\label{thm:mainThm2}
    Let $d\geq 5$ and $A\subset \mathbb{Z}^m$ be finite and nonempty. Suppose that $A$ contains a $d$-dimensional spherical Freiman model $D$, and that $|D|$ is sufficiently large in terms of $d$. Partition $\mathcal{F}_{d,R^2}$ into standard $\sigma$-caps, where $\sigma=|S|^{-1/(d-1)}$; this induces a partition of $S$, so that $S=\bigsqcup_{\theta\in\Theta}S_\theta$. Then there are $\asymp_d |S|$ caps on $\mathcal{F}_{d,R^2}$ at this scale, and 99\% of $S$ lies in a proportion \begin{align*}
        p_S\leq\min\Big\{1,O_{d,\epsilon}\Big(|A|^{-\frac{2}{d-1}+\epsilon}\alpha^{-\frac{d+1}{d-1}+\epsilon}\|A\|^2\Big)\Big\}
    \end{align*} of them. Moreover, 99\% of $S$ is covered by at most $O_{d,\epsilon}(p_S|S|)$ directionally localized spherical sections, that is, sets of the form \begin{align*}
        S_{\theta_j}\cap \left\{x\in \mathbb{R}^d:x\cdot z_j=\frac{|z_j|^2}{2}\right\},\quad z_j\neq 0.
    \end{align*}
\end{theorem}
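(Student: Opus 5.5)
The approach is to compare two bounds on the additive energy of $D$: a lower bound forced by the small Fourier $L^1$ norm of $A$, and an upper bound coming from the curvature of the sphere via $\ell^2$ decoupling (Bourgain--Demeter), which is sensitive to how many $\sigma$-caps $S$ occupies.

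\emph{Step 1: lower bound on the energy.} Since $D\subseteq A$, Parseval and H\"older give
\begin{align*}
|D|=\int_{\mathbb{T}^m}\widehat{\mathbf 1_A}\,\overline{\widehat{\mathbf 1_D}}\le \|\widehat{\mathbf 1_A}\|_{4/3}\|\widehat{\mathbf 1_D}\|_4\le \|A\|^{1/2}|A|^{1/4}E_2(D)^{1/4},
\end{align*}
where the last step uses the interpolation $\|\widehat{\mathbf 1_A}\|_{4/3}\le \|\widehat{\mathbf 1_A}\|_1^{1/2}\|\widehat{\mathbf 1_A}\|_2^{1/2}$. Hence
\begin{align*}
E_2(D)\ge |D|^4/(\|A\|^2|A|).
\end{align*}
A Freiman isomorphism of order $h\ge 2$ preserves additive quadruples, so $E_2(S)=E_2(D)$, and $|S|=|D|=\alpha|A|$. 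The same inequality also holds for any subset of $D$, or for weighted versions. This lets me apply it after discarding sparse caps.

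\emph{Step 2: decoupling upper bound.} Set $f=\sum_{x\in S}e(x\cdot t)$ and $f_\theta$ for the piece supported on $S_\theta$, so that $E_2(S)=\|f\|_{L^4}^4$ on a suitable torus after rescaling the sphere of radius $R$ to the unit sphere. The $\sigma$-caps then correspond to $\delta=\sigma^2$ neighbourhood pieces. For $d\ge 5$ the exponent $4$ lies above the critical exponent $2(d+1)/(d-1)$. I would use $\ell^2$ decoupling at $p=4$ with its supercritical loss, interpolated with trivial bounds, in the form
\begin{align*}
E_2(S)\lesssim_\epsilon \sigma^{-(d-3)-\epsilon}\Big(\sum_\theta E_2(S_\theta)^{1/2}\Big)^2.
\end{align*}
The exponent $d-3$ is chosen to match the target, and checking it against the Bourgain--Demeter constant is part of the work. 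Then I bound $E_2(S_\theta)\le |S_\theta|^3$. By a dyadic pigeonhole, I restrict to a family of $N$ caps each carrying about $|S|/N$ points and together carrying 99\% of $S$. By Cauchy--Schwarz, $\big(\sum_\theta |S_\theta|^{3/2}\big)^2\lesssim |S|^3/N$. Combining with Step 1 applied to the corresponding subset gives
\begin{align*}
N\lesssim_\epsilon \sigma^{-(d-3)-\epsilon}\|A\|^2/\alpha.
\end{align*}
With $\sigma=|S|^{-1/(d-1)}$ and the count $\asymp_d|S|$ of caps (Appendix A), this reads $p_S=N/|S|\lesssim |A|^{-2/(d-1)+\epsilon}\alpha^{-(d+1)/(d-1)+\epsilon}\|A\|^2$, which is the claimed bound.

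\emph{Step 3: sections.} If $a+b=z$ with $|a|=|b|=R$, then $|z-a|^2=R^2$ forces $a\cdot z=|z|^2/2$. So every additive quadruple inside a cap lies, for its common sum $z$, on the section $\{x\cdot z=|z|^2/2\}$. On the popular caps, the near-extremal energy from Steps 1--2 forces $E_2(S_\theta)\gtrsim|S_\theta|^3$ up to the same losses. A popular-sums argument (Balog--Szemer\'edi style pigeonholing on representation counts) then produces, for most of the mass of each such cap, a single $z_j\neq 0$ whose section captures a positive proportion of that cap. This gives $O(p_S|S|)$ sections covering 99\% of $S$.

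I expect the main obstacle to be Step 2. Getting exactly the power $\sigma^{-(d-3)}$ from decoupling at $L^4$ on the lattice sphere needs care with the rescaling of the torus, with lattice-point regularity in dimension $d\ge5$, and with keeping the pigeonholing losses within $|A|^\epsilon$. A secondary difficulty is making the section covering in Step 3 hold with a cap-by-cap constant rather than only on average.
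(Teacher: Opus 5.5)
The cap count in Steps 1--2 is sound in substance. It is a legitimate variant of the paper's route: you use the uniform measure and the bound $E_2(S_\theta)\le|S_\theta|^3$, whereas the paper bounds $\sum_\theta\mathrm{Cap}_2(S_\theta)^{1/2}\le K^2|A|^{1/2}\|A\|$ and then uses $\mathrm{Cap}_2(S_\theta)\ge|S_\theta|$. Both give $O(\sigma^{-(d-3)-\epsilon}\|A\|^2/\alpha)$ caps. Your pigeonhole, however, is misstated. In general no family of caps of comparable size carries 99\% of $S$. Also, Cauchy--Schwarz runs the other way: over $N$ caps it gives the lower bound $\sum_\theta|S_\theta|^{3/2}\ge|S|^{3/2}N^{-1/2}$. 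Use a tail argument instead. Let $S'$ be the union of the caps outside the $N$ largest. Each of these has at most $|S|/N$ points, so $\sum_\theta|S'_\theta|^{3/2}\le(|S|/N)^{1/2}|S'|$. Step 1 applied to $S'$, together with decoupling, then gives $|S'|^2\le K^4\|A\|^2|A||S|/N$, so $|S'|\le|S|/100$ once $N\ge 10^4K^4\|A\|^2/\alpha$. No interpolation is needed for the constant: Bourgain--Demeter at $p=4$, $\delta=\sigma^2$ gives $K^4\ll_{d,\epsilon}\sigma^{-(d-3)-\epsilon}$ directly, transferred to the torus as in Appendix A.

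The genuine gap is Step 3. Steps 1--2 control energies only on average. The most they give for individual caps (apply Step 1 to the union of the low-energy caps) is that most mass lies in caps with $E_2(S_\theta)\gtrsim Q_0^{-1}|S_\theta|^3$, where $Q_0:=K^4\|A\|^2/\alpha$. So a popular-sums argument in general guarantees only a section carrying a $Q_0^{-1}$-fraction of a cap, not a positive proportion.

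Even a positive proportion would not suffice. Covering 99\% forces you to iterate on what remains of each cap, and energy is not hereditary: a cap can be a structured piece on one section plus a nearly dissociated remainder. Your Step 1 is hereditary, but only at global scale. For $Y\subseteq S_\theta$ it gives $E_2(Y)\ge|Y|^4/(\|A\|^2|A|)$, and greedy covering then costs about $\|A\|^2|A|/|S_\theta|$ sections per cap. For $N$ comparable caps that is roughly $N^2\|A\|^2/\alpha$ sections in total, not $O(p_S|S|)$.

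The missing ingredient is additive capacity. Glue near-minimizing measures $\mu_\theta$ on the caps with weights $t_\theta\propto\mathrm{Cap}_2(S_\theta)^{1/2}$, then apply decoupling and the weighted form of your Step 1. This yields $B:=\sum_\theta\mathrm{Cap}_2(S_\theta)^{1/2}\le K^2|A|^{1/2}\|A\|$ (Theorems \ref{thm:ACsubAdditive} and \ref{thm:decouplingFourierNorm}). Capacity is monotone, so every $Y\subseteq S_\theta$ satisfies $E_2(Y)\ge|Y|^4/\mathrm{Cap}_2(S_\theta)$. Hence Lemma \ref{lem:richFibers} can be applied greedily, each step removing at least $m^2/(4\mathrm{Cap}_2(S_\theta))$ of the $m$ remaining points, at a cost of $O(\mathrm{Cap}_2(S_\theta)/|S_\theta|)$ sections per cap. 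After discarding caps with $\mathrm{Cap}_2(S_\theta)^{1/2}/|S_\theta|>200B/|S|$ (which carry less than 1\% of $S$), the remaining caps satisfy $\sum_\theta\mathrm{Cap}_2(S_\theta)/|S_\theta|\le 200B^2/|S|$. This matches the cap-count bound, as in Proposition \ref{prop:capacity controls concentration}.
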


A key point is that, with the partition of the spherical model, one not only has concentration into geometric subspaces, but also hereditary additive structure on almost all cap mass.

\begin{theorem}[Hereditary structure on almost all cap mass]\label{thm:mainThm3} Under the same hypotheses as Theorem \ref{thm:mainThm2}, partition $\mathcal{F}_{d,R^2}$ into standard $\sigma$-caps, where $R^{-1}\leq \sigma\ll_d 1$. Let $S=\bigsqcup_{\theta\in \Theta}S_\theta$ be the induced partition. Then, for every $\rho\in (0,1)$ there exists a subset of indices $\Theta_{good}\subseteq \Theta$ such that \begin{align*}
    \sum_{\theta\in \Theta_{good}}|S_\theta|\geq (1-\rho)|S|
\end{align*} and the following holds for every $\theta\in \Theta_{good}$. If $\varnothing\neq Y\subseteq S_\theta$ and $\delta= |Y|/|S_\theta|$, then some nonzero $z\in Y+Y$ satisfies \begin{align*}
    |Y\cap (z-Y)|\gg_{d,\epsilon}\sigma^{d-3+\epsilon}\frac{\delta \rho \alpha }{\|A\|^2}|Y|.
\end{align*}
\end{theorem}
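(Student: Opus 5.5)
Our plan is to go through the additive energy of the caps. Small Fourier $L^1$ norm forces large energy on every reasonably large subset of $A$, the Freiman isomorphism carries this energy to $S$, and Bourgain--Demeter decoupling splits the energy of $S$ into energies of the individual caps. A Markov-type argument then makes almost every cap additively rich. Finally, every subset $Y$ of a good cap is handled by applying the same energy inequality with the Fourier norm of $A$ as the weight.

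\textbf{Step 1 (energy from small norm, hereditarily).} For any $B\subseteq A$ we have $\widehat{\mathbf 1_B}=\widehat{\mathbf 1_B}$. The standard Hölder interpolation $\|\widehat{f}\|_2^2\le \|\widehat f\|_1^{2/3}\|\widehat f\|_4^{4/3}$ gives $|B|^3\le \|B\|^2E_2(B)$ for $f=\mathbf 1_B$. However, $\|B\|$ is not controlled by $\|A\|$. Instead we use the pairing
\[
|B|=\langle \mathbf 1_B,\mathbf 1_A\rangle=\int \widehat{\mathbf 1_B}\,\overline{\widehat{\mathbf 1_A}} .
\]
Bounding this by $\|\widehat{\mathbf 1_A}\|_1\,\|\widehat{\mathbf 1_B}\|_\infty$ is useless. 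We pair more carefully and write
\[
|B|^2=\sum_x \mathbf 1_B*\mathbf 1_{-B}(x)\,\mathbf 1_{A-A}\text{-weighted},
\]
or more simply
\[
|B|^2=\int |\widehat{\mathbf 1_B}|^2\ \text{vs}\ \int \widehat{\mathbf 1_B}\,\overline{\widehat{\mathbf 1_A}}\,\widehat{\mathbf 1_B}\,\overline{\widehat{\mathbf 1_B}}\dots
\]
The route I would take is
\[
|B|^2=\sum_{b,b'\in B}\mathbf 1_A(b)\ \Rightarrow\ |B|^2\le \int |\widehat{\mathbf 1_A}|\,|\widehat{\mathbf 1_B}|^2 \cdots .
\]
Concretely, $\sum_{b_1+b_2-b_3\in A}1\ge |B|^2$, because $b_1=b_3$ gives $b_2\in A$. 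This count equals $\int \widehat{\mathbf 1_B}^2\overline{\widehat{\mathbf 1_B}}\,\overline{\widehat{\mathbf 1_A}}$, which is at most $\|A\|\,\sup|\widehat{\mathbf 1_B}|\,\|\widehat{\mathbf 1_B}\|_\infty$. That bound is too lossy. So I would instead use Hölder with $L^4$:
\[
|B|^2\le \|A\|^{1/2}\cdots,
\]
aiming for the target form $E_2(B)\gtrsim |B|^3\,\delta\alpha/\|A\|^2$. The $\sup_z r_{Y+Y}(z)$ conclusion is exactly what one gets from $E_2(Y)\ge c|Y|^3$, since
\[
E_2(Y)=\sum_z r(z)^2\le |Y|^2\max_z r(z),
\]
after discarding $z=0$ or a trivial diagonal. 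So the core inequality to prove is a weighted energy bound
\[
\sum_{\theta}E_2(S_\theta)\ \gtrsim\ \sigma^{-(d-3)-\epsilon}\,\frac{\alpha}{\|A\|^2}\cdot(\text{mass terms}).
\]

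\textbf{Step 2 (decoupling).} The Freiman isomorphism of order $h\ge2$ preserves $E_2$. Applying Bourgain--Demeter $\ell^2$ decoupling on the sphere at $L^4$ bounds $E_2(S)$, and more generally $E_2(S')$ for $S'\subseteq S$, by
\[
\sigma^{-(d-3)/2\cdot?-\epsilon}\Big(\sum_\theta \|\widehat{\mathbf 1_{S'_\theta}}\|_4^2\Big)^2 .
\]
The factor $\sigma^{d-3+\epsilon}$ in the statement suggests the loss comes from the number of caps through a Cauchy--Schwarz applied to the sum of squares.

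\textbf{Step 3 (good caps and heredity).} Define $\Theta_{bad}$ as the set of caps $\theta$ containing some $Y$ that violates the conclusion. Take $Y_\theta$ to be the violators and $S'=\bigcup_{\theta\in\Theta_{bad}}Y_\theta$. Suppose the bad mass exceeds $\rho|S|$. Step 1, applied to the subset of $D$ corresponding to $S'$ (a subset of $A$), gives $E_2(S')$ large. Decoupling then bounds $E_2(S')$ by the cap energies, each of which is small by the violation. This yields a contradiction.

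The main obstacle is Step 1 in hereditary form: obtaining energy lower bounds for arbitrary subsets of $A$ in terms of $\|A\|$ and relative density. I would prove
\[
E_2(B)\ge |B|^4/(|A|\,\|A\|^2)
\]
via the identity
\[
|B|^2\le \int|\widehat{\mathbf 1_B}|^2|\widehat{\mathbf 1_A}|\cdot(\ldots)
\]
and Hölder. Making the densities match $\delta\rho\alpha$ is the delicate bookkeeping.
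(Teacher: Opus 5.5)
Your ingredients are the right ones: a hereditary energy bound from small $\|A\|$, Freiman transfer, $L^4$ decoupling over the caps, a Markov-type selection of good caps, and conversion of energy into a popular nonzero sum. However, the two quantitative steps that carry the proof are left undone, and the one you do specify does not close.

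\textbf{Step 1.} Your target $E_2(B)\ge |B|^4/(|A|\|A\|^2)$ is correct, but none of the routes you try reaches it: neither the count of $b_1+b_2-b_3\in A$ nor $\int|\widehat{\mathbf 1_B}|^2|\widehat{\mathbf 1_A}|\cdots$ works. The pairing you discarded does work once you use $L^4$--$L^{4/3}$ H\"older instead of $L^\infty$--$L^1$:
\[
|B|=\int\widehat{\mathbf 1_B}\,\overline{\widehat{\mathbf 1_A}}\le\|\widehat{\mathbf 1_B}\|_4\|\widehat{\mathbf 1_A}\|_{4/3},\qquad \|\widehat{\mathbf 1_A}\|_{4/3}\le\|A\|^{1/2}|A|^{1/4}.
\]
The same line gives $\|\widehat{\mu}\|_4^4\ge(|A|\|A\|^2)^{-1}$ for \emph{every} probability measure $\mu$ supported on a subset of $A$. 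That measure version is what you actually need.

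\textbf{Step 3 is the genuine gap.} Put $\lambda:=\sigma^{d-3+\epsilon}\rho\alpha\|A\|^{-2}$. A violator $Y_\theta\subseteq S_\theta$ of density $\delta_\theta$ has $E_2(Y_\theta)\lesssim\lambda\delta_\theta|Y_\theta|^3$; here you use $r(0)\le|Y_\theta|$ and $r(z)\ge1$ for nonzero $z\in Y_\theta+Y_\theta$. Equivalently, $\|\widehat{\mathbf 1_{Y_\theta}}\|_4^2\lesssim\lambda^{1/2}|Y_\theta|^2|S_\theta|^{-1/2}$. Feeding the uniform measure on $S'=\bigcup_\theta Y_\theta$ into Step 1 and decoupling yields only
\[
\Big(\sum_{\theta}|Y_\theta|\Big)^2\lesssim K^2\lambda^{1/2}|A|^{1/2}\|A\|\sum_{\theta}\frac{|Y_\theta|^2}{|S_\theta|^{1/2}}.
\]
This does not contradict $\sum_{\theta\text{ bad}}|S_\theta|>\rho|S|$. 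The left side sees $|S'|$, not the bad mass. Violators may be far smaller than their caps and of very uneven relative size. So one bad cap with $Y_\theta=S_\theta$ can dominate both sides while arbitrarily many bad caps with small violators go unseen. Dyadic pigeonholing in $|Y_\theta|/|S_\theta|^{1/2}$ would patch this, but with a logarithmic loss that $\sigma^\epsilon$ does not absorb.

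\textbf{The missing idea} is to weight the caps independently of $|Y_\theta|$. With $W:=\sum_{\theta\text{ bad}}|S_\theta|^{1/2}$, take
\[
\mu=\sum_{\theta\text{ bad}}t_\theta|Y_\theta|^{-1}\mathbf 1_{Y_\theta},\qquad t_\theta=|S_\theta|^{1/2}/W.
\]
Decoupling gives $\|\widehat\mu\|_4^2\le K^2\sum_\theta t_\theta^2\,|Y_\theta|^{-2}\|\widehat{\mathbf 1_{Y_\theta}}\|_4^2\lesssim K^2\lambda^{1/2}/W$. The measure form of Step 1, transferred through the Freiman isomorphism, gives $\|\widehat\mu\|_4^2\ge|A|^{-1/2}\|A\|^{-1}$. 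Hence
\[
\sum_{\theta\text{ bad}}|S_\theta|\le W^2\lesssim K^4\lambda|A|\|A\|^2=K^4\lambda\alpha^{-1}\|A\|^2|S|\le\rho|S|
\]
for a suitable implied constant in $\lambda$, since $K^4\ll_{d,\epsilon}\sigma^{-(d-3)-\epsilon}$.

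This is precisely what the paper packages as additive capacity. $\mathrm{Cap}_2$ is an infimum over all probability measures, and super-additivity under decoupling uses the optimal weights $t_\theta\propto\mathrm{Cap}_2(S_\theta)^{1/2}$. Good caps are those with $\mathrm{Cap}_2(S_\theta)\le Q|S_\theta|$, and heredity to every $Y\subseteq S_\theta$ is then automatic by monotonicity of $\mathrm{Cap}_2$. In particular, the factor $\sigma^{d-3}$ is $K^{-4}$ for the $L^4$ decoupling constant $K\approx\sigma^{-(d-3)/4}$. It is not a Cauchy--Schwarz loss from counting caps, as you suggest.
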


Together, these theorems show that a large high-dimensional spherical model cannot exist inside a set $A$ of small Fourier $L^1$ norm: it must either be small or low-dimensional, and when it does capture some spherical geometry, these parts concentrate into a small number of highly-structured caps.\\

Theorems \ref{thm:mainThm2} and \ref{thm:mainThm3} are special cases of a more general phenomenon concerning the flat-vs-curved dichotomy. Let $\mathcal{P}(A)$ denote the set of probability measures supported on $A\subset \mathbb{Z}^d$. For each $\mu\in \mathcal{P}(A)$, one may form its weighted additive energy $E_2(\mu):=\sum_z (\mu*\mu)(z)^2$; one can then define the \textit{additive capacity} of $A$ by \begin{align*}
    \text{Cap}_2(A):=\Big(\inf_{\mu\in \mathcal{P}(A)}E_2(\mu)\Big)^{-1}.
\end{align*} We study additive capacity in more detail in Sections \ref{sec:additiveCapacity} and \ref{sec:partitionDecoupling}, but note here that this notion is very useful in combination with decoupling results. If $S=\bigsqcup_\theta S_\theta$ is a partition, we say that the partition has $L^4$ decoupling constant $K $ if \begin{align}\label{eq:intro:decoupling}
    \Big\|\sum_{x\in S}a_xe(x\cdot \xi)\Big\|_{L^4(\mathbb{T}^d,\xi)}\leq K\Big(\sum_\theta \Big\|\sum_{x\in S_\theta}a_xe(x\cdot \xi)\Big\|_{L^4(\mathbb{T}^d,\xi)}^2\Big)^{1/2},\quad (a_x)\in \mathbb{C}^S.
\end{align} Such an inequality holds for many curved surfaces; see \cite{bourgainDemeter2017}. Our next result shows that the Fourier $L^1$ norm of $A$ controls additive capacity over a partition if it admits such a decoupling.

\begin{theorem}[Fourier $L^1$ norm controls partition capacity]\label{thm:masterThm}
    Let $A\subset \mathbb{Z}^m$ be finite, and suppose $D\subseteq A$ is Freiman 2-isomorphic to $S\subset \mathbb{Z}^d$. If $S=\bigsqcup_{\theta\in \Theta}S_\theta$ satisfies (\ref{eq:intro:decoupling}), and suppose $S_\theta\neq \emptyset$ for all $\theta\in \Theta$, then 
    \begin{align*}
        \sum_{\theta\in \Theta} \mathrm{Cap}_2(S_\theta)^{1/2}\leq K^2|A|^{1/2}\|A\|.
    \end{align*}
\end{theorem}

These estimates allow one to move from the global structure of $A$ to a collection of local conclusions. Existing arguments of Shkredov \cite{shkredov} and Bedert \cite{bedert2025largesumfreesubsetssets} give global or hereditary energy lower bounds for sets with small Fourier $L^1$ norm, but the key point here is that one can get quantitatively effective results for the local distribution and geometry of these sets.\\

The preceding results on spherical models have the following simpler, but particularly concrete, counterpart concerning convexly independent sets in $\mathbb{Z}^d$. Recall that a convexly independent set $A\subset \mathbb{Z}^d$ satisfies, for all $a\in A$, that $a$ is not in the convex hull of $A\setminus \{a\}$. We show that any set with a large convexly independent subset must have a nontrivial Fourier $L^1$ norm, by considering the inclusion of 3-term arithmetic progressions.

\begin{theorem}\label{thm:3-AP}
    There exists an absolute constant $c\in (0,1)$ such that the following holds. Suppose that $A\subset \mathbb{Z}^d$ is sufficiently large, and satisfies $\|A\|\leq \exp(\log^{c}|A|)$. Then, any subset $A'\subset A$ with $|A'|\geq \frac{|A|}{\|A\|}$ contains a nontrivial 3-term arithmetic progression.
\end{theorem}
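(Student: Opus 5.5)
The plan is to convert the small Wiener norm of $A$ into large additive energy for the subset $A'$. Balog--Szemer\'edi--Gowers then gives a large piece of small doubling, and a Freiman model in a cyclic group together with a Kelley--Meka type Roth bound forces a 3-term progression. Write $K:=\|A\|$, $f=\mathbf 1_A$, $g=\mathbf 1_{A'}$.

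\emph{Step 1 (energy of $A'$).} Since $A'\subseteq A$, Parseval gives $|A'|=\int_{\mathbb T^d}\widehat f\,\overline{\widehat g}$. By Cauchy--Schwarz and then Cauchy--Schwarz again,
\begin{align*}
|A'|\le \Big(\int|\widehat f|\Big)^{1/2}\Big(\int|\widehat f||\widehat g|^2\Big)^{1/2}\le K^{1/2}\Big(\int|\widehat f|^2\Big)^{1/4}\Big(\int|\widehat g|^4\Big)^{1/4}.
\end{align*}
This says $|A'|\le K^{1/2}|A|^{1/4}E_2(A')^{1/4}$, that is, $E_2(A')\ge |A'|^4/(K^2|A|)$. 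The hypothesis $|A'|\ge |A|/K$ then gives $E_2(A')\ge |A'|^3/K^3$.

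\emph{Step 2 (small doubling piece and a cyclic model).} Balog--Szemer\'edi--Gowers yields $A''\subseteq A'$ with $|A''|\ge K^{-O(1)}|A'|$ and $|A''+A''|\le K^{O(1)}|A''|$.
\begin{itemize}
\item A generic linear map $\mathbb Z^d\to\mathbb Z$ is a Freiman isomorphism of every fixed order on the finite set $A''$. Since a nontrivial 3-AP is the relation $x+z=2y$ with $x\neq z$, Freiman 2-isomorphisms preserve and reflect nontrivial 3-APs, so we may assume $A''\subset\mathbb Z$.
\item Ruzsa's modelling lemma then produces $A'''\subseteq A''$ with $|A'''|\ge |A''|/8$. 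The set $A'''$ is Freiman 2-isomorphic to a subset of $\mathbb Z/N\mathbb Z$ with $N\le K^{O(1)}|A''|$; we choose $N$ odd, or prime, so that 3-APs in $\mathbb Z/N\mathbb Z$ correspond to genuine ones.
\end{itemize}
The image therefore has density $\ge K^{-C}$ in $\mathbb Z/N\mathbb Z$.

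\emph{Step 3 (Roth bound).} By Kelley--Meka, a 3-AP-free subset of $\mathbb Z/N\mathbb Z$ has density at most $\exp(-c_0\log^{1/12}N)$. On the other hand, $N\ge |A''|\ge |A|K^{-O(1)}$, and $K\le\exp(\log^c|A|)$ with $c<1/12$. Hence $\log N\gg\log|A|$, while $K^{-C}=\exp(-C\log^c|A|)$ exceeds $\exp(-c_0\log^{1/12}N)$ once $|A|$ is large. So the model contains a nontrivial 3-AP, and pulling it back gives one in $A'$.

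The main obstacle I expect is the bookkeeping in Step 2. One has to make sure the Freiman isomorphism order is high enough: Ruzsa modelling needs control of $A''$ of order 2 together with the small doubling. One also has to keep all losses polynomial in $K$, since any superpolynomial loss (for instance from Freiman's theorem itself) would be incompatible with the quasi-polynomial Roth bound.
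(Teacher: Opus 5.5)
Your proposal is correct and follows essentially the same route as the paper. The steps are the same: the energy bound $E_2(A')\ge |A'|^4/(\|A\|^2|A|)$ (your two Cauchy--Schwarz steps are equivalent to the paper's one-part capacity bound via Lemma~\ref{lem:hedCapacity}), then polynomial Balog--Szemer\'edi--Gowers, Pl\"unnecke (implicit in your bound on $N$), Ruzsa's modelling lemma, and a Kelley--Meka type Roth bound. The differences are cosmetic. You apply the Roth bound in $\mathbb{Z}/N\mathbb{Z}$ directly instead of passing to a third of the cyclic group, which is fine because the Freiman 2-isomorphism already pulls back any model progression. You also settle for $c<1/12$ rather than the paper's $c<1/6$.
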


In particular, no such subset $A'$ admits a Freiman 2-isomorphic image that is convexly independent. Thus, a set with sufficiently small Fourier $L^1$ norm cannot contain a large subset with any image of a convexly independent set. The conclusion is particularly illustrative in light of Behrend's construction. Behrend obtains large progression-free sets in $\mathbb{Z}$ via projecting a subset of a high-dimensional sphere into the integers, where the strict convexity of the sphere precludes any 3-term arithmetic progressions.

\subsection{Organization of the paper}

We begin in \S\ref{sec:projections} by discussing how the Fourier $L^1$ norm behaves through Freiman-preserving projections (that is, maps $\phi:\mathbb{Z}^m\rightarrow \mathbb{Z}^n$ that preserve additive relations). This is a ``robustness" test that demonstrates that the norm is actually detecting additive structure (in particular, we show that for many types of Freiman-preserving maps, a small norm of the preimage induces a small norm of the image). We next provide a short proof and commentary around Theorem \ref{thm:3-AP} in \S\ref{sec:3APs}.

Sections \ref{sec:additiveCapacity}, \ref{sec:partitionDecoupling}, and \ref{sec:mainProofs} are the main technical ingredients for the proof of  Theorems  \ref{thm:mainThm2}, \ref{thm:mainThm3}, and \ref{thm:masterThm}. In \S\ref{sec:additiveCapacity}, we discuss additive capacity and its properties, and its relation to Freiman maps. In \S\ref{sec:partitionDecoupling}, we prove that additive capacity is super-additive with respect to a partition that decouples; this leads to the proof of Theorem \ref{thm:masterThm}, which is the abstract input to Theorems  \ref{thm:mainThm2} and \ref{thm:mainThm3}. In \S\ref{sec:mainProofs}, we first prove Theorem \ref{thm:iteratedDimensionReduction} and then use known decoupling results to apply Theorem \ref{thm:masterThm} to the sphere to obtain Theorems \ref{thm:mainThm2}, \ref{thm:mainThm3}.

We end with \S\ref{sec:extensionsLimitations}, where we discuss further applications of this general theory; in particular, the decoupling framework applies not only to spheres, but to wider classes of curved surfaces.

\section{Freiman-preserving projections}\label{sec:projections}

We begin by discussing why sets in $\mathbb{Z}^d$ are relevant to additive combinatorics in $\mathbb{Z}$, with a view towards Fourier $L^1$ norm. This relevance is twofold: sets $S\subset \mathbb{Z}^d$ can be projected into $\mathbb{Z}$, preserving both additive structure and small Fourier $L^1$ norm, and sets $S'\subset \mathbb{Z}$ may have high-dimensional structure that deeply influences this norm.

It is easy to show that if $U\in SL_n(\mathbb{Z})$, then $|S'|=|S|$ and $\|S'\| =\|S\|$. Less obvious is the following lemma, which shows that one can embed a higher-dimensional set in $\mathbb{Z}$ while preserving small Fourier $L^1$ norm.

\begin{lemma}\label{lem:projection}
    Suppose $W\subset \{-d,\cdots ,d\}^n$ is finite, and for $b\in \mathbb{N}$ with $b>2d$ define \begin{align*}
        A:=\Big\{\sum_{i=1}^{n}w_ib^{i-1}:w\in W\Big\}.
    \end{align*} Then, \begin{align*}
        \|A\|\leq (2+2\log b)^n\|W\|.
    \end{align*}
\end{lemma}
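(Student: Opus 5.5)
The plan is to write the exponential sum of $A$ as a restriction of the $n$-variable exponential sum of $W$ to the curve $t\mapsto (t, bt, b^2t,\dots,b^{n-1}t)$, and then recover the $L^1$ norm on $\mathbb{T}$ from the $L^1$ norm on $\mathbb{T}^n$ by convolving with de la Vallée Poussin–type kernels. Concretely, set $F(\xi):=\sum_{w\in W}e(w\cdot\xi)$ on $\mathbb{T}^n$. Since $b>2d$, the map $w\mapsto \sum_i w_ib^{i-1}$ is injective on $\{-d,\dots,d\}^n$. More importantly, every $k\in\mathbb{Z}$ has at most one representation $k=\sum_i w_ib^{i-1}$ with $|w_i|\le d$, and this is a balanced base-$b$ expansion. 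So $\widehat{\mathbf 1_A}(t)=F(t,bt,\dots,b^{n-1}t)$.

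The approach is to show $\mathbf 1_A=\mathbf 1_W\circ\psi$ with $\psi(k)$ the digit vector of $k$, and to realize this as a multiplier argument. For each $k\in\mathbb{Z}$ write uniquely $k=\sum_i c_i b^{i-1}$ with digits $c_i\in(-b/2,b/2]$ (allowing arbitrarily many digits). If $k\in A$, the digits are exactly $w$ and vanish beyond $n$. The key step is to exhibit a function $G$ on $\mathbb{T}^n$ with $\|\widehat{G}\|_{\ell^1}$-type control whose coefficients recover $\mathbf 1_A$. I would proceed one coordinate at a time, by induction on $n$. For $n=1$ we have $A=W$ and the bound is trivial. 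For the inductive step, split $w=(w_1,w')$, so $A=\{w_1+b\,a':(w_1,w')\in W\}$. Define $A_{j}:=\{a':(j,w')\in W\}$ fibers for $|j|\le d$; then $\mathbf 1_A(k)=\sum_{j}\mathbf 1_{A_j}((k-j)/b)$.

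To make this $L^1$-efficient, I would use the trigonometric-polynomial identity $\widehat{\mathbf 1_A}(t)=\sum_j e(jt)\widehat{\mathbf 1_{A_j}}(bt)$. I would then use a smooth cutoff in the first variable. Let $V(s)=\sum_{|j|< b}\kappa_j e(js)$ be a trapezoidal (de la Vallée Poussin) kernel equal to $1$ on $|j|\le d$ and supported on $|j|<b-d$, which is possible since $b>2d$; it has $\|V\|_{L^1(\mathbb{T})}\le 2+2\log b$ (a crude Dirichlet-kernel-type bound suffices). Then
\begin{align*}
\widehat{\mathbf 1_A}(t)=\int_{\mathbb{T}} V(t-s)\,F_1(s,bt)\,ds,\qquad F_1(s,u):=\sum_{(j,w')\in W}e(js)\,\widehat{}\ \text{of the remaining coordinates at }u,
\end{align*}
where $F_1(s,\cdot)$ is the $(n-1)$-variable object obtained by substituting the remaining coordinates along the curve. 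This works because convolution with $V$ kills no frequency $|j|\le d$, and because $b>2d$ ensures frequencies $j+bm$ from distinct $m$ do not alias back. Taking $L^1$ norms, using Minkowski and then the change of variables $t\mapsto (t,bt)$ (which is measure preserving on $\mathbb{T}$ after averaging over the fibre, since $u=bt$ covers $\mathbb{T}$ $b$ times uniformly), I get $\|A\|\le \|V\|_{L^1}\sup/\text{avg}$, namely $\|A\|\le(2+2\log b)\int_{\mathbb{T}}\|\text{(n-1)-dim sum}\|\,ds$. The inductive hypothesis applied to the shifted-modulated set in $n-1$ coordinates, with modulation by $e(js)$ harmless since the induction is proved for weighted sums with unimodular fibre weights, then yields $(2+2\log b)^{n-1}\|W\|$ after integrating over $s$.

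The main obstacle is the change of variables step. One must justify that $\int_{\mathbb{T}}|H(s,bt)|\,dt=\int_{\mathbb{T}}|H(s,u)|\,du$ for $1$-periodic $H$, which holds since $t\mapsto bt$ preserves Lebesgue measure on $\mathbb{T}$. One must also set up the induction hypothesis for functions $f$ supported on $\{-d,\dots,d\}^n$ with coefficients, rather than indicator sets, so that the $s$-dependent phases are allowed. I would state the lemma in that generality, as $\|f\circ\psi^{-1}\|\le(2+2\log b)^n\|f\|$, and prove it by the induction above. The de la Vallée Poussin kernel bound $\|V\|_1\le 2+2\log b$ is routine, for instance by taking $V$ to be the difference of two Fejér-type averages of Dirichlet kernels of length between $d$ and $b-d$.
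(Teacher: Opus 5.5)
\textbf{The gap is in the one step that carries the content of the lemma.} Your convolution identity is a tautology. For fixed $u$, the function $s\mapsto F_1(s,u)=\sum_{|j|\le d}e(js)\widehat{\mathbf 1_{A_j}}(u)$ has frequencies in $[-d,d]$, where $\kappa_j=1$. So $\int_{\mathbb T}V(t-s)F_1(s,u)\,ds=F_1(t,u)$, and the identity only says $\widehat{\mathbf 1_A}(t)=F_1(t,bt)$. No aliasing is involved; $b>2d$ matters for injectivity of the digit map and for the size of the constant.

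The kernel helps only because it lets you move absolute values inside. After that, everything rests on bounding $\int_{\mathbb T}\int_{\mathbb T}|V(t-s)|\,|F_1(s,bt)|\,ds\,dt$ by a multiple of $\|F_1\|_{L^1(\mathbb T^2)}$. Your justification, $\int_{\mathbb T}|H(s,bt)|\,dt=\int_{\mathbb T}|H(s,u)|\,du$ for \emph{frozen} $s$, does not apply here. The weight $|V(t-s)|$ couples $t$ to $s$, and it oscillates at the same scale as $|F_1(s,bt)|$ (your $V$ has degree up to $b-d$). So nothing factors as $\|V\|_{L^1}\cdot\|F_1\|_{L^1}$.

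Carrying out the fibre decomposition honestly, with $t=(u+m)/b$, $u\in[0,1)$, $0\le m<b$, the double integral equals
\[
\int_{\mathbb T}\int_{\mathbb T}|F_1(s,u)|\,\Big(\frac1b\sum_{m=0}^{b-1}\Big|V\Big(\frac{u+m}{b}-s\Big)\Big|\Big)\,ds\,du .
\]
The constant this produces is therefore the sampled quantity $\sup_x\frac1b\sum_{m=0}^{b-1}|V(x+m/b)|$, a Riemann sum of $|V|$ at $b$ equally spaced points, not $\|V\|_{L^1(\mathbb T)}$. These genuinely differ. Already for $d=1$, $b=3$ your support condition forces $V=D_1=1+2\cos(2\pi y)$, with $\|V\|_{L^1}=\frac13+\frac{2\sqrt3}{\pi}\approx 1.44$, while $\frac13\sum_{m=0}^{2}|V(\frac16+\frac m3)|=\frac53$.

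\textbf{What is missing, and how the paper supplies it.} You need a discretization (Marcinkiewicz--Zygmund type) bound for $\frac1b\sum_m|V(x+m/b)|$, uniform in $x$. This is exactly where the paper's proof does its work. Take $V=D_d$; no smoothing is needed. The bound $|D_d(y)|\le\min\{2d+1,1/(2\|y\|_{\mathbb T})\}$, together with the $1/b$-separation of the sample points (one in each arc of length $1/b$), gives the paper's estimate $\sum_{0\le t<b}\min\{2d+1,f(t,v)/2\}\le 2(2d+1)+2b\log b$. Since $2d+1\le b$, this is a per-digit factor $2+2\log b$.

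Once this estimate is inserted, your digit-by-digit induction, stated for general coefficient functions and using Fubini in $s$, is a valid alternative route. The paper instead handles all digits at once. It uses Fourier inversion of $\mathbf 1_W$ to get $|\widehat{\mathbf 1_A}(t)|\le\int_{\mathbb T^n}|\widehat{\mathbf 1_W}(v)|\prod_i|D_d(b^{i-1}t-v_i)|\,dv$, then factorizes the $t$-integral of the product kernel over base-$b$ digits.

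If you drop the support restriction tied to $b$ and use a genuine de la Vallée Poussin kernel of degree about $2d$, the same sampled-sum computation even gives a per-digit constant independent of $b$. But that also comes from the discrete estimate, not from $\|V\|_{L^1}$.
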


\begin{remark} Given an integer $k\geq 2$, if we choose $b>2kd$, then $A$ is Freiman $k$-isomorphic to $W$ in the natural way. This shows we can control the Fourier $L^1$ norm of a lower-dimensional projection effected through a base encoding in terms of the Fourier $L^1$ norm of the higher-dimensional set. 
\end{remark}

\begin{proof}
    From periodicity, we may compute 
    \begin{align*}
    \|A\| &= b^{-n}\int_\mathbb T \sum_{\ell = 0}^{b^n-1}|\widehat{\mathbf 1_A}(t+\ell/b^n)| dt\\&\leq b^{-n}\sup_t\sum_{0\leq \ell<b^n}\Big|\sum_{\epsilon\in \{-d,...,d\}^n}\mathbf{1}_W(\epsilon)\prod_{i=0}^{n-1}e(\epsilon_ib^i(t+\ell/b^n))\Big|.
\end{align*} By Fourier inversion, we may write \begin{align}
    \mathbf{1}_W(x)=\int_{\mathbb{T}^n}\widehat{\mathbf{1}_W}(v)e(-v\cdot x)dv.
\end{align} 
    This provides then that \begin{align*}
    &\sum_{\epsilon\in \{-d,...,d\}^n}\mathbf{1}_W(\epsilon)\prod_{i=0}^{n-1}e(\epsilon_ib^i(t+\ell/b^n))\\ &= \sum_{\epsilon\in \{-d,...,d\}^n}\int_{\mathbb{T}^n}\widehat{\mathbf{1}_W}(v)e(-v\cdot \epsilon)dv\prod_{i=0}^{n-1}e(\epsilon_ib^i(t+\ell/b^n)) \\ &= \int_{\mathbb{T}^n}\widehat{\mathbf{1}_W}(v)\sum_{\epsilon\in \{-d,...,d\}^n}\prod_{i=0}^{n-1}e(\epsilon_i(-v_{i+1}+b^i(t+\ell/b^n)))dv \\ &= \int_{\mathbb{T}^n}\widehat{\mathbf{1}_W}(v)\prod_{i=0}^{n-1}\sum_{-d\leq k\leq d}e(k(-v_{i+1}+b^i(t+\ell/b^n)))dv
\end{align*} and so, by the triangle inequality, this has magnitude at most \begin{align*}
    \int_{\mathbb{T}^n}|\widehat{\mathbf{1}_W}(v)|\prod_{i=0}^{n-1}\Big|\sum_{-d\leq k\leq d}e(k(-v_{i+1}+b^i(t+\ell/b^n)))\Big|dv.
\end{align*} Summing over $\ell$ and interchanging summations, and bringing in the supremum, then provides that \begin{align*}
    \|A\|\leq b^{-n}\int_{\mathbb{T}^n}|\widehat{\mathbf{1}_W}(v)|\sup_t\sum_{0\leq \ell<b^n}\prod_{i=0}^{n-1}\Big|\sum_{-d\leq k\leq d}e(k(-v_{i+1}+b^i(t+\ell/b^n)))\Big|dv.
\end{align*} The interior sum is bounded above by $\min\{2d+1,\frac{1}{2\|-v_{i+1}+b^i(t+\ell/b^n)\|_\mathbb{T}}\}$, and so \begin{align*}
    \|A\|\leq b^{-n}\int_{\mathbb{T}^n}|\widehat{\mathbf{1}_W}(v)|\sup_t\sum_{0\leq \ell<b^n}\prod_{i=0}^{n-1}\min\Big\{2d+1,\frac{1}{2\|-v_{i+1}+b^i(t+\ell/b^n)\|_\mathbb{T}}\Big\}dv.
\end{align*} Now, writing $t=\sum_{i=1}^n\frac{t_i}{b^i}+\epsilon$, where $t_i\in \{0,..,b-1\}$ and $0\leq \epsilon<b^{-n}$, one has that \begin{align*}
    b^it\equiv \frac{t_{i+1}}{b}+\epsilon'\ (\text{mod }1),
\end{align*} where $0\leq \epsilon'<1/b$. Then, on $\mathbb{T}$, the quantity $-v_{i+1}+b^it\ (\text{mod }1)$ lies in an arc $\gamma\subset \mathbb{T}$ of measure $1/b$, and 
\begin{align*}
    \frac{1}{\|-v_{i+1}+b^it\|_\mathbb{T}}\leq \sup_{0\leq s\leq 1/b}\frac{1}{\|-v_{i+1}+t_{i+1}/b+s\|_\mathbb{T}}:=f(t_{i+1},v_{i+1}).
\end{align*} Now, as $\ell$ ranges over $0,...,b^{n}-1$, the numbers $(t+\ell/b^n)$ are $1/b^n$-separated on $\mathbb{T}$, and so the vector of digits $(t_1,...,t_n)$ are distinct for each translation. This provides that \begin{align*}
    \sup_t\sum_{0\leq \ell<b^n}\prod_{i=0}^{n-1}\min\Big\{2d+1,\frac{1}{2\|-v_{i+1}+b^i(t+\ell/b^n)\|_\mathbb{T}}\Big\}\\ \leq \sup_t \sum_{0\leq \ell<b^n}\prod_{i=0}^{n-1}\min\Big\{2d+1,f((t+\ell/b^n)_{i+1},v_{i+1})/2\Big\} \\ =\sum_{0\leq t_1,...,t_n<b}\prod_{i=0}^{n-1}\min \Big\{2d+1,f(t_{i+1},v_{i+1})/2\Big\}.
\end{align*} Thus, \begin{align*}
    \|A\|\leq b^{-n}\int_{\mathbb{T}^n}|\widehat{\mathbf{1}_W}(v)|\sum_{0\leq t_1,...,t_n<b}\prod_{i=0}^{n-1}\min\{2d+1,f(t_{i+1},v_{i+1})/2\}dv.
\end{align*} Since \begin{align*}
    \sum_{0\leq t_1,...,t_n<b}\prod_{i=0}^{n-1}\min\{2d+1,f(t_{i+1},v_{i+1})/2\}=\prod_{i=0}^{n-1}\sum_{0\leq t<b}\min\{2d+1,f(t,v_{i+1})/2\},
\end{align*} we then have the bound \begin{align*}
    \|A\|\leq b^{-n}\int_{\mathbb{T}^n}|\widehat{\mathbf{1}_W}(v)|\prod_{i=0}^{n-1}\sum_{0\leq t<b}\min\{2d+1,f(t,v_{i+1})/2\}dv.
\end{align*} The key point is that this sum may be explicitly evaluated; we may compute \begin{align*}
    \sum_{0\leq t<b}\min\{2d+1,f(t,v_{i+1})/2\}\leq 2(2d+1)+2b\log b
\end{align*} uniformly over $v_{i+1}$; along with the assumption that $2d<b$ this provides that 
\[
    \|A\|\leq (2+2\log b)^n\int_{\mathbb{T}^n}|\widehat{\mathbf{1}_W}(v)|dv.\qedhere
\]
\end{proof}

One may also get two-sided control in the case that the basis vectors are sufficiently lacunary.

\begin{lemma}
    Suppose two finite sets $S\subset \mathbb{Z}^m$ and $S'\subset \mathbb{Z}^n$ are Freiman-$H$ isomorphic for some integer $H\geq 1$, with isomorphism $\psi:S\rightarrow S'$. Then, for trigonometric polynomials 
    \begin{align*}
        f(x):=\sum_{s\in S}a_se(s\cdot x),\quad g(x):=\sum_{s\in S}a_{s}e(\psi(s)\cdot x)
    \end{align*} with complex coefficients satisfying $\|f\|_{L^\infty(\mathbb{T}^m)},\|g\|_{L^\infty(\mathbb{T}^n)}\leq K$, one has that \begin{align*}
        \Big|\|f\|_{L^1(\mathbb{T}^m)}-\|g\|_{L^1(\mathbb{T}^n)}\Big|\ll \frac{K}{H}.
    \end{align*}
\end{lemma}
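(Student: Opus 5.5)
The plan is to approximate the function $y\mapsto \sqrt{y}$ on $[0,K^2]$ by a polynomial of degree at most $H$. Then $\|f\|_{L^1}$ and $\|g\|_{L^1}$ are, up to a small uniform error, linear combinations of the even moments $\int|f|^{2j}$ and $\int|g|^{2j}$ with $j\le H$. The Freiman $H$-isomorphism will force these moments to coincide.

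\textbf{Step 1 (moments agree).} For $0\le j\le H$, expand
\begin{align*}
\int_{\mathbb{T}^m}|f|^{2j}=\sum_{\substack{s_1,\dots,s_{2j}\in S\\ s_1+\cdots+s_j=s_{j+1}+\cdots+s_{2j}}} a_{s_1}\cdots a_{s_j}\overline{a_{s_{j+1}}\cdots a_{s_{2j}}},
\end{align*}
using orthogonality of characters on $\mathbb{T}^m$. The analogous formula holds for $g$, with the constraint $\psi(s_1)+\cdots+\psi(s_j)=\psi(s_{j+1})+\cdots+\psi(s_{2j})$.

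A Freiman $H$-isomorphism also preserves $j$-fold relations for $j\le H$: pad both sides with $H-j$ copies of the same fixed element $s_0\in S$, apply the $H$-relation, and cancel the $H-j$ copies of $\psi(s_0)$. Hence the two index sets coincide and $\int|f|^{2j}=\int|g|^{2j}$ for all $0\le j\le H$. The case $j=0$ is trivial, since both tori have measure $1$.

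\textbf{Step 2 (polynomial approximation).} Choose a real polynomial $p$ of degree at most $H$ with
\begin{align*}
\sup_{0\le y\le 1}|\sqrt y-p(y)|\ll 1/H.
\end{align*}
This is classical: substituting $y=t^2$, it is equivalent to approximating $|t|$ on $[-1,1]$ by an even polynomial of degree $2H$. The Jackson/Bernstein estimate gives error $\asymp 1/(2H)$, and one can symmetrize any near-best approximant to make it even.

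Rescaling, $P(y):=Kp(y/K^2)$ satisfies $|\sqrt y-P(y)|\ll K/H$ on $[0,K^2]$. Since $|f|^2\le K^2$ pointwise, integrating gives
\begin{align*}
\Big|\|f\|_{L^1}-\int P(|f|^2)\Big|\ll K/H,
\end{align*}
and likewise for $g$.

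\textbf{Step 3 (conclude).} The quantity $\int P(|f|^2)$ is a fixed linear combination of the moments $\int|f|^{2j}$ with $j\le H$. By Step 1 it therefore equals $\int P(|g|^2)$, and the triangle inequality yields $\big|\|f\|_{L^1}-\|g\|_{L^1}\big|\ll K/H$.

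The only step I expect to require care is Step 2. The argument needs the sharp $1/H$ rate for approximating $\sqrt y$ uniformly on $[0,1]$ by polynomials of degree $H$, which does hold, rather than a weaker rate such as one obtained from truncated Taylor or binomial series. Everything else is bookkeeping, including the padding argument reducing $j$-fold relations to $H$-fold ones.
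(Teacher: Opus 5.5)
Your proposal is correct and follows the paper's proof essentially step for step. Both arguments use three ingredients: the even moments agree, $\int|f|^{2j}=\int|g|^{2j}$ for $j\le H$; $\sqrt{y}$ is approximated on $[0,1]$ by a polynomial of degree at most $H$ with error $O(1/H)$, rescaled by $K$; and the triangle inequality finishes the argument. The differences are cosmetic. The paper constructs the approximant explicitly, by truncating the Chebyshev expansion of $|x|$ obtained from the Fourier series of $|\cos\theta|$, where you cite Jackson's theorem plus symmetrization. Your padding argument also supplies the justification for the lower-order moment identities, which the paper simply attributes to the definition of $\psi$.
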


\begin{remark} If $\psi$ is a linear map, so that $\psi(x)=Mx$ for some $n\times m$ matrix, then $\|g\|_\infty\leq \|f\|_\infty$, and so the inequality becomes \begin{align*}
    |\|f\|_1-\|g\|_1|\ll \frac{\|f\|_\infty}{H}.
\end{align*}
\end{remark}

\begin{proof}
    First, note that by definition of $\psi$, \begin{align*}
        \|f\|_{L^{2c}(\mathbb{T}^m)}=\|g\|_{L^{2c}(\mathbb{T}^n)}\quad \text{for every integer }1\leq c\leq H.
    \end{align*}

    Consider the absolute value function on $[-1,1]$. We expand it into a series involving Chebyshev polynomials. First, note that the Fourier series expansion of $|\cos \theta|$ is \begin{align*}
        |\cos \theta|=\frac{2}{\pi}+\frac{4}{\pi}\sum_{k\geq 1}\frac{(-1)^{k+1}}{4k^2-1}\cos(2k\theta)
    \end{align*} and so, if $T_j$ is the $j$th Chebyshev polynomial, one has that \begin{align*}
        |\cos \theta|=\frac{2}{\pi}+\frac{4}{\pi}\sum_{k\geq 1}\frac{(-1)^{k+1}}{4k^2-1}T_{2k}(\cos \theta),
    \end{align*} since by definition $T_{j}(\cos \theta)=\cos(j\theta)$. Then, since cosine is surjective on $[-1,1]$, we deduce that \begin{align}
        |x|=\frac{2}{\pi}+\frac{4}{\pi}\sum_{k\geq 1}\frac{(-1)^{k+1}}{4k^2-1}T_{2k}(x),\quad |x|\leq 1.
    \end{align}

Let \begin{align*}
    Q_H(x):=\frac{2}{\pi}+\frac{4}{\pi}\sum_{1\leq k\leq H}\frac{(-1)^{k+1}}{4k^2-1}T_{2k}(x)
\end{align*} be the truncation of this series; one observes that \begin{align*}
    \Big||x|-Q_H(x)\Big|\ll \sum_{k> H}\frac{|T_{2k}(x)|}{4k^2-1}\ll \frac{1}{H}\quad (|x|\leq 1),
\end{align*} since $|T_{2k}|\leq 1$ for $|x|\leq 1$. The function $Q_H$ is an even polynomial, and thus may be written $Q_H(x)=R_H(x^2)$ for a real polynomial of degree at most $H$. We then have that \begin{align}\label{eq:sqRootApprox}
    \Big|\sqrt{x}-R_H(x)\Big|\ll \frac{1}{H}\quad (0\leq x \leq 1).
\end{align} Using (\ref{eq:sqRootApprox}) and scaling $f$ and $g$ to be 1-bounded, we produce that \begin{align*}
    \Big|\|f\|_{L^1(\mathbb{T}^m)}-K\int_{\mathbb{T}^m}R_H(|f(x)|^2/K^2)dx\Big|\ll \frac{K}{H},\\\Big|\|g\|_{L^1(\mathbb{T}^n)}-K\int_{\mathbb{T}^n}R_H(|g(x)|^2/K^2)dx\Big|\ll \frac{K}{H}.
\end{align*} On the other hand, writing $R_H(y)=\sum_{c=0}^{H}a_c y^c$, we may compute \begin{align*}
    \int_{\mathbb{T}^m}R_H(|f(x)|^2/K^2)dx&=\int_{\mathbb{T}^m}\sum_{c=0}^H a_c|f(x)|^{2c}/K^{2c}dx \\ &= \sum_{c=0}^Ha_cK^{-2c}\int_{\mathbb{T}^m}|f(x)|^{2c}dx \\ &=\sum_{c=0}^{H}a_cK^{-2c} \int_{\mathbb{T}^n}|g(x)|^{2c}dx \\ &= \int_{\mathbb{T}^n}R_H(|g(x)|^2/K^2)dx.
\end{align*} Consequently by the triangle inequality, we produce the result.
\end{proof}

We may then get the following result.

\begin{theorem}\label{thm:linear_iso}
    Suppose $n\geq 2$ is a positive integer. Suppose $S\subseteq [N]^n$, $\psi:S\rightarrow \mathbb{Z}$, $\psi(x_1,\cdots,x_n)=\sum_{i=1}^nx_id_i$, where $d_1,\cdots d_n$ are positive integers such that $d_{i+1}>KN(d_1+\cdots +d_i)$ for some integer $K\geq 1$ and all $1\leq i<n$. Then, for any trigonometric polynomial $g(x)=\sum_{s\in S}a_se(s\cdot x)$ with $\|g\|_\infty\leq K$, we have \begin{align*}
\int_{\mathbb{T}^n}|g(x)|dx=\int_\mathbb{T}\left|\sum_{s\in S}a_se(\psi(s)t)\right|dt+O(1).
    \end{align*} Here, the constant term $O(1)$ does not depend on $N$, $n$, or $K$.
\end{theorem}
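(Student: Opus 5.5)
The plan is to deduce Theorem \ref{thm:linear_iso} from the preceding lemma on Freiman-$H$ isomorphisms, taking $H=K$. Two things are needed: that $\psi$ is a Freiman $K$-isomorphism from $S$ onto $\psi(S)$, and that the one-dimensional polynomial $\tilde g(t)=\sum_{s\in S}a_s e(\psi(s)t)$ also obeys $\|\tilde g\|_\infty\le K$. With both in hand, the lemma gives $\bigl|\|g\|_{L^1(\mathbb{T}^n)}-\|\tilde g\|_{L^1(\mathbb{T})}\bigr|\ll K/K=O(1)$. The implied constant comes only from the Chebyshev truncation estimate $\sum_{k>H}(4k^2-1)^{-1}\ll 1/H$, so it is absolute and independent of $N$, $n$ and $K$.

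For the sup-norm bound, note that $\tilde g(t)=g(td_1,\dots,td_n)$. So $\tilde g$ is the restriction of $g$ to a closed one-parameter subgroup of $\mathbb{T}^n$, and therefore $\|\tilde g\|_\infty\le\|g\|_\infty\le K$. This is exactly the situation of the remark after the lemma: $\psi$ is linear, given by the $1\times n$ matrix $(d_1,\dots,d_n)$.

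The main step, and the only real content, is the Freiman $K$-isomorphism property. One direction is immediate because $\psi$ is linear: any additive relation among points of $S$ is carried to the same relation among their images. For the converse, suppose $\sum_{j=1}^K\psi(s^{(j)})=\sum_{j=1}^K\psi(s^{(K+j)})$. Set $c_i:=\sum_{j\le K}s^{(j)}_i-\sum_{j\le K}s^{(K+j)}_i$, so that $\sum_i c_id_i=0$. Since $S\subseteq[N]^n$, each coordinate lies in $[1,N]$, and hence $|c_i|\le K(N-1)<KN$. I will show $c=0$ by descending induction on the top index. If $c_n\ne0$, then $|c_nd_n|\ge d_n>KN(d_1+\dots+d_{n-1})\ge\bigl|\sum_{i<n}c_id_i\bigr|$, which contradicts $\sum_i c_id_i=0$. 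So $c_n=0$, and repeating the argument with $d_{n-1}$, then $d_{n-2}$, and so on gives $c=0$. This says precisely that the two sums of $K$ elements agree in $\mathbb{Z}^n$. Taking $K=1$ also shows that $\psi$ is injective.

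The delicate point is making the lacunarity hypothesis fit the coefficient range. If coordinates were allowed to be $0$ (so $[N]=\{0,\dots,N\}$), the bound would become $|c_i|\le KN$, and the strict inequality in the hypothesis $d_{i+1}>KN(d_1+\dots+d_i)$ still handles this. One last check is that the lemma requires the coefficients of $g$ and $\tilde g$ to match under $\psi$. Because $\psi$ is injective, the $a_s$ are transported bijectively, so its $L^{2c}$-norm identity applies for every $1\le c\le K$, and the conclusion of the theorem follows.
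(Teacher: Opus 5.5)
Your proposal is correct and follows the paper's proof essentially step for step. The lacunarity condition forces the top coefficient $c_n$ to vanish, and descending through the indices shows that $\psi$ is a Freiman $K$-isomorphism. The identity $\tilde g(t)=g(td_1,\dots,td_n)$ gives $\|\tilde g\|_\infty\le\|g\|_\infty\le K$, and the Chebyshev-approximation lemma with $H=K$ then yields an absolute $O(K/H)=O(1)$ error. You also address injectivity and relations of order $c<K$, which the lemma's $L^{2c}$ identities need; the paper leaves these implicit, but they do not change the argument.
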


\begin{proof}
We claim that $\psi$ is a Freiman isomorphism of order $K$. The forward direction is obvious. Now suppose that \begin{align*}
    \psi(\mathbf{x}_1)+\cdots +\psi(\mathbf{x}_K)=\psi(\mathbf{x}_{K+1})+\cdots +\psi(\mathbf{x}_{2K}).
\end{align*} Then, \begin{align*}
    \sum_{i=1}^nd_i(x_{1,i}+\cdots +x_{K,i})=\sum_{i=1}^nd_i(x_{K+1,i}+\cdots +x_{2K,i}),
\end{align*} and so \begin{align*}
    \sum_{i=1}^{n-1}d_i(x_{1,i}+\cdots +x_{K,i}-x_{K+1,i}-\cdots -x_{2K,i})=d_n(x_{K+1,n}+\cdots +x_{2K,n}-x_{1,n}-\cdots -x_{K,n}).
\end{align*} Suppose that the right-hand side is nonzero; then, by taking absolute values and using the triangle inequality, \begin{align*}
    d_n\leq  \sum_{i=1}^{n-1}d_i\cdot K(N-1)=K(N-1)(d_1+\cdots +d_{n-1}).
\end{align*} This is a contradiction by the growth condition, and so \begin{align*}
    x_{K+1,i}+\cdots +x_{2K,i}=x_{1,i}+\cdots +x_{K,i}.
\end{align*} Iterating this argument gives the desired claim.\\

Using the previous lemma then provides that if \begin{align*}
    f(t):=\sum_{s\in S}a_se(\psi(s)t),
\end{align*} 
then $\|f\|_{L^\infty(\mathbb T)}\leq \|g\|_{L^\infty(\mathbb T^n)}\leq K$, and 
\[
    \Big|\|f\|_{L^1(\mathbb{T})}-\|g\|_{L^1(\mathbb{T}^n)}\Big|=O(1).\qedhere
\]
\end{proof}

\begin{remark}
    If we take $g(x) = \sum_{s\in S}e(s\cdot x)$ and $K = |S|$, then Theorem~\ref{thm:linear_iso} implies that $\|S\| = \|\psi(S)\| + O(1)$. In particular, we can construct subsets of $\mathbb Z$ whose Fourier $L^1$ norm exhibits different types of growth by choosing a suitable multidimensional set $S$. Estimates for multidimensional subsets of $\mathbb Z$ were also studied in \cite{Hanson_2021,Giorgis_2013}, where their results apply to a broader class of sets but only give lower bounds on the Fourier $L^1$ norm.
\end{remark}

\section{Midpoints, convexity, and three-term progressions}\label{sec:3APs}

We now discuss the proof of Theorem \ref{thm:3-AP}, conditional on Theorem \ref{thm:decouplingFourierNorm}, which will be shown later.

\begin{proof}[Proof of Theorem \ref{thm:3-AP} assuming Theorem \ref{thm:decouplingFourierNorm}]
    Suppose that $A\subset \mathbb{Z}^d$, $|A|\geq 2$, satisfies $\|A\|\leq \exp(\log^c|A|)$ for a constant $0<c<1/6$. We note that the trivial partition of $A'\subset A$ satisfies the $L^4$ decoupling inequality (\ref{eq:intro:decoupling}) with constant $K=1$. Applying Theorem \ref{thm:decouplingFourierNorm} then gives that \begin{align*}
        \text{Cap}_2(A')\leq |A| \|A\|^2.
    \end{align*} By considering the uniform probability measure on $A'$, we observe that \begin{align*}
        \text{Cap}_2(A')\geq \frac{|A'|^4}{E_2(A')}
    \end{align*} and so \begin{align*}
        E_2(A')\geq \frac{|A'|^4}{|A| \|A\|^2}.
    \end{align*} Since $|A'|\geq |A|/\|A\|$, we then deduce that $E_2(A')\geq \frac{|A'|^3}{\|A\|^3}$. Assume for contradiction that $A'$ is 3-AP free. By applying the polynomial Balog-Szemerédi-Gowers theorem (see for example~\cite{TaoVu_2009}) to the set $A'$, one is supplied a set $B\subset A'$ and an absolute constant $C_0$ such that \begin{align*}
        |B|\geq \|A\|^{-C_0}|A'|,\quad |B+B|\leq \|A\|^{C_0}|B|. 
    \end{align*} By Plünnecke's inequality, $|2B-2B|\leq \|A\|^{C_1}|B|$ for another absolute constant $C_1$. Ruzsa's modeling lemma~\cite{Ruzsa_1992} then supplies $B_1\subset B$, with $|B_1|\geq |B|/2$, and a Freiman 2-isomorphism from $B_1$ into $\mathbb{Z}/q\mathbb{Z}$, where $q\leq 2\|A\|^{C_1}|B|$. The image $B'\subset \mathbb Z/q\mathbb Z$ is progression-free and has density $|B'|/q\geq \|A\|^{-C_2}$ for an absolute constant $C_2$.

    Now, partition $\mathbb{Z}/q\mathbb{Z}$ into  three intervals $I_1,I_2,I_3$ of length $\lfloor q/3\rfloor$ or $q-2\lfloor q/3\rfloor$. There exists some $i\in [3]$ such that $|B'\cap I_i|\gg q\|A\|^{-C_2}$. Identifying this interval with an interval of integers introduces no wraparound three-term progression. Using the quantitative version of Roth's theorem following the work of Kelley--Meka~\cite{Kelley-Meka_2023}, Bloom--Sisask~\cite{Bloom-Sisask_2023} and Raghavan~\cite{raghavan_2026}, we obtain \begin{align}\label{eq:kelley-meka}
        \|A\|^{-C_2}\ll \exp(-c_0(\log q)^{1/6}/\log\log q)
    \end{align} for an absolute constant $c_0$. On the other hand, $|A'|\geq |A|/\|A\|$, and we have that $q\geq |B'|=|B_1|\gg |B|\gg |A| \|A\|^{-C_4}$. By assumption $\log \|A\|\leq \log^c|A|$, and so for sufficiently large $|A|$, $\log q\geq \frac{1}{2}\log |A|$. This makes (\ref{eq:kelley-meka}) impossible, which provides the desired contradiction.
\end{proof}

\begin{remark} The principal Fourier-analytic input in the preceding proof is not new. Indeed, Corollary 5.5 of \cite{bedert2025largesumfreesubsetssets}, applied with $f=\mathbf{1}_A$, gives for every $B\subseteq A$ the hereditary energy estimate $E_2(B)\gg \frac{|B|^4}{|A|\|A\|^2}$. In particular, if $|B|\geq |A|/\|A\|$, then $E(B)\gg \frac{|B|^3}{\|A\|^3}$. Combining this estimate with the Balog-Szemerédi-Gowers theorem, Ruzsa's modeling lemma, and the quantitative form of Roth's theorem used above gives another proof of the theorem. An earlier closely related precursor appears in Example 48 of Shkredov \cite{shkredov}, who proves a hereditary mixed-energy lower bound for subsets of a set with small Wiener norm. \end{remark}

Our additive capacity argument recovers the displayed estimate from the one-part partition. The point of the theorem here is not a new hereditary-energy inequality, but its geometric interpretation, and its contrast with the finer partition-sensitive conclusions proved for spherical models.\\

A well-known result of Darmon and Merel~\cite{Darmon-Merel_1997} implies that $T_k = \{n^k:n\in \mathbb{N}\}$ is 3-AP free for any $k\geq 3$. As a consequence, we produce the following lower-bound estimate for variants of Weyl sums, which may be of independent interest.

\begin{corollary}
    Let $k\geq 3$ be fixed, and let $A$ be a finite subset of $\mathbb{N}$ with $|A|$ sufficiently large. Then, there exists an absolute constant $c>0$ such that \begin{align*}
        \int_0^1\Big|\sum_{n\in A}e(\alpha n^k)\Big|d\alpha >\exp(\log^{c}|A|).
    \end{align*}
\end{corollary}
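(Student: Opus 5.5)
The plan is to reduce the corollary to Theorem \ref{thm:3-AP}, applied to the set of $k$-th powers of $A$. Put
\begin{align*}
    T:=\{n^k:n\in A\}\subset \mathbb{N}.
\end{align*}
The map $n\mapsto n^k$ is injective on $\mathbb{N}$, so $|T|=|A|$. The integral in the statement is exactly the Fourier $L^1$ norm of $T$:
\begin{align*}
    \int_0^1\Big|\sum_{n\in A}e(\alpha n^k)\Big|d\alpha=\int_{\mathbb{T}}\Big|\sum_{m\in T}e(m\alpha)\Big|d\alpha=\|T\|.
\end{align*}
So it suffices to show $\|T\|>\exp(\log^c|T|)$ once $|T|$ is large. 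Here $c$ is the absolute constant from Theorem \ref{thm:3-AP}, or any smaller positive constant.

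We argue by contradiction. Suppose $\|T\|\leq \exp(\log^c|T|)$ with $|T|$ large enough for Theorem \ref{thm:3-AP} to apply (with $d=1$). Take the subset $A'=T$. This is allowed because $\|T\|\geq 1$: the integrand's $L^1$ norm dominates the absolute value of its zeroth Fourier coefficient, and $0\notin T$ when $A\subset\mathbb{N}$ with $0\notin\mathbb{N}$, but in any case $\|T\|\geq \|T\|_{L^2}^{2}/\|T\|_{L^\infty}\geq 1$. Hence $|A'|=|T|\geq |T|/\|T\|$. Theorem \ref{thm:3-AP} then says $T$ contains a nontrivial three-term arithmetic progression $x^k+z^k=2y^k$ with $x,y,z$ distinct. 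By Darmon--Merel \cite{Darmon-Merel_1997}, for $k\geq 3$ the equation $x^k+z^k=2y^k$ has no solutions in positive integers other than $x=y=z$. This contradiction proves the bound.

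The only step needing care is matching conventions. First, we must check that a nontrivial 3-AP in $T$ really corresponds to a solution with $x\neq z$ in positive integers. Second, the threshold "$|A|$ sufficiently large" is inherited directly from Theorem \ref{thm:3-AP}, since $|T|=|A|$. Nothing else is an obstacle; the substantive work lies in Theorem \ref{thm:3-AP} and in Darmon--Merel.
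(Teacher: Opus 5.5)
Your proposal is correct and is essentially the paper's argument: apply Theorem \ref{thm:3-AP} with $A'=A$ to the set $T$ of $k$-th powers, which is 3-AP free by Darmon--Merel, using $\|T\|\geq 1$ to meet the size condition. One small tidy-up: the zeroth Fourier coefficient gives nothing, since it vanishes when $0\notin T$, but the coefficient at any $m\in T$ (or your $L^2/L^\infty$ bound) does give $\|T\|\geq 1$; and if $0\in A$, a progression through $0$ would force either $2y^k$ to be a $k$-th power or $x^k+z^k=0$, neither of which is possible nontrivially.
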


An interesting open question, probing the sharpness of the threshold in Theorem \ref{thm:3-AP}, is the following: given $\epsilon>0$, can one construct a large 3-AP-free set $A\subset \mathbb{Z}$ such that $\|A\|\leq |A|^\epsilon$? A non-exhaustive computer search gives that the set \begin{align*}
    A_N=\Big\{\sum_{i=0}^{N-1} d_i4^i:d_i\in \{0,1\}\Big\}
\end{align*} is 3-AP-free and that $\|A_N\|\leq |A_N|^\epsilon$ with $\epsilon< 0.3334$ for $N=9$, say. This beats the trivial example of the two point set $K=\{n,n+1\}$, which satisfies $\|K\|=|K|^{\delta}$ with $\delta = \frac{\log(4/\pi )}{\log 2}> 0.3485$.  It remains open whether arbitrarily small positive exponents are possible.

\section{Additive capacity and Freiman transference}\label{sec:additiveCapacity}

To produce Theorem \ref{thm:iteratedDimensionReduction} and the more general theorem following, we require a notion of what we call \textit{additive capacity}. Let $\mu$ be a finitely supported probability measure on $\mathbb{Z}^d$, and define the associated weighted $h$-fold additive energy by \begin{align*}
    \mathcal{E}_h(\mu):=\sum_{z\in \mathbb{Z}^d}(\mu^{*h}(z))^2=\sum_{x_1+\cdots+x_h=y_1+\cdot +y_h}\prod_{i=1}^h\mu(x_i)\mu(y_i)=\|\widehat{\mu}\|_{L^{2h}(\mathbb{T}^d)}^{2h}.
\end{align*} If $X_1,\cdots, X_h$, $Y_1,\cdots,Y_h$ are independent random variables with law $\mu$, then \begin{align*}
    \mathcal{E}_h(\mu)=\mathbb{P}(X_1+\cdots +X_h=Y_1+\cdots+Y_h),
\end{align*} and so this weighted additive energy has a natural probabilistic interpretation.

\begin{definition}[Additive capacity] Let $G$ be an abelian group. For a nonempty finite set $S\subset G$ and an integer $h\geq 2$, define \begin{align*}
    \text{Cap}_h(S):=\Big(\inf_{\mu \in \mathcal{P}(S)}\mathcal{E}_h(\mu)\Big)^{-1},
\end{align*} where $\mathcal{P}(S)$ denotes the set of probability measures supported on $S$ (a standard compactness argument shows that this infimum exists). We also define $\mathrm{Cap}_h(\emptyset) = 0$.
\end{definition}

We observe that $\text{Cap}_h(S)$ is large when one can distribute mass on $S$ in such a way that the two random $h$-fold sums rarely collide. Also, this quantity is increasing with respect to inclusion: \begin{align*}
    D\subset A\implies \mathrm{Cap}_h(D)\leq \mathrm{Cap}_h(A).
\end{align*}

\begin{lemma}[Capacity is hereditary]\label{lem:hedCapacity}
Let $A\subset \mathbb{Z}^d$ be finite and nonempty. For every integer $h\geq2 $, every $D\subset A$, and every probability measure $\mu\in \mathcal{P}(D)$, one has that \begin{align*}
    \mathcal{E}_h(\mu)\geq \frac{1}{|A|\|A\|^{2h-2}}.
\end{align*} Equivalently, $\mathrm{Cap}_h(D)\leq \|A\|^{2h-2}|A|$.
\end{lemma}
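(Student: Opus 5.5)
We will prove that for $\mu\in\mathcal P(D)$, $D\subset A$,
\[
\mathcal E_h(\mu)\ge \frac{1}{|A|\,\|A\|^{2h-2}}
\]
by a duality (Hölder) argument. The idea is to pair $\widehat\mu$ against $\widehat{\mathbf 1_A}$ and exploit that $\mu$ is supported on $A$. The starting point is the identity
\[
1=\sum_{a\in A}\mu(a)\mathbf 1_A(a)=\int_{\mathbb T^d}\widehat\mu(\xi)\overline{\widehat{\mathbf 1_A}(\xi)}\,d\xi ,
\]
which holds by Parseval since $\operatorname{supp}\mu\subset D\subset A$. The goal is to bound the right-hand side by $\|\widehat\mu\|_{2h}$ times norms of $\widehat{\mathbf 1_A}$ that involve only $\|A\|=\|\widehat{\mathbf 1_A}\|_1$ and $|A|$.

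The key step is an interpolation. Write $F=\widehat{\mathbf 1_A}$ and apply Hölder with exponents $2h$ and $p=2h/(2h-1)$:
\[
1\le \|\widehat\mu\|_{2h}\,\|F\|_{p}.
\]
Next, interpolate $\|F\|_p$ between $L^1$ and $L^2$. Since $1<p<2$, log-convexity of $L^p$ norms gives $\|F\|_p\le \|F\|_1^{1-\vartheta}\|F\|_2^{\vartheta}$, where $1/p=(1-\vartheta)+\vartheta/2$. Because $1/p=1-1/(2h)$, this yields $\vartheta=1/h$. Parseval gives $\|F\|_2=|A|^{1/2}$, so
\[
\|F\|_p\le \|A\|^{1-1/h}|A|^{1/(2h)}.
\]

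Finally, raise $1\le \|\widehat\mu\|_{2h}\|A\|^{(h-1)/h}|A|^{1/(2h)}$ to the power $2h$. This gives
\[
1\le \mathcal E_h(\mu)\,\|A\|^{2h-2}|A|,
\]
which is exactly the claim, using the identity $\mathcal E_h(\mu)=\|\widehat\mu\|_{2h}^{2h}$ from the definition. Taking the infimum over $\mu$ gives $\mathrm{Cap}_h(D)\le \|A\|^{2h-2}|A|$.

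No step is a serious obstacle. The only point needing care is choosing the Hölder pairing correctly: pairing against $\mathbf 1_A$, not $\mathbf 1_D$, is what makes the bound hereditary. One must also check the sign and conjugation conventions in Parseval on $\mathbb T^d$, which are harmless since only absolute values enter afterward.
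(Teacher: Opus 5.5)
Your proposal is correct and follows essentially the same route as the paper: Parseval pairing of $\mu$ with $\mathbf 1_A$, H\"older with exponents $2h$ and $p=2h/(2h-1)$, interpolation of $\|\widehat{\mathbf 1_A}\|_p$ between $L^1$ and $L^2$ with $\vartheta=1/h$, and raising to the $2h$-th power. Your computation of the interpolation exponent and your remark that pairing against $\mathbf 1_A$ rather than $\mathbf 1_D$ is what makes the bound hereditary are both accurate.
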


\begin{proof}
    Put $p=\frac{2h}{2h-1}$. Since $\mu$ is supported on $D\subset A$ and has total mass one, Parseval's identity provides that \begin{align*}
        1=\sum_{x\in \mathbb{Z}^d}\mathbf{1}_A(x)\mu(x)=\int_{\mathbb{T}^d}\widehat{A}(\xi)\overline{\widehat{\mu}(\xi)}d\xi.
    \end{align*} By H\"older's inequality and then interpolation of $L^p$ between $L^1$ and $L^2$, \begin{align*}
        1\leq \|A\|_p\|\widehat{\mu}\|_{2h}\leq \|A\|^{1-1/h}\|\widehat{\mathbf{1}_A}\|_2^{1/h}\|\widehat{\mu}\|_{2h}.
    \end{align*} Since $\|\widehat{\mathbf{1}_A}\|_2^2=|A|$, after raising to the $2h$-th power then deduce that \begin{align*}1\leq \|A\|^{2h-2}|A|\cdot \|\widehat{\mu}\|_{2h}^{2h}.\end{align*} Then, using that $\|\widehat{\mu}\|_{2h}^{2h}=\mathcal{E}_h(\mu)$ and taking infimum over $\mu\in \mathcal{P}(D)$, we have that \begin{align*}
        1\leq \|A\|^{2h-2}|A|\inf_{\mu\in \mathcal{P}(D)}\mathcal{E}_h(\mu),
    \end{align*} and so 
    \[
        \text{Cap}_h(D)=\Big(\inf_{\mu\in \mathcal{P}(D)}\mathcal{E}_h(\mu)\Big)^{-1}\leq \|A\|^{2h-2}|A|.\qedhere
    \]
\end{proof}

\textbf{Remark}. For the uniform measure $\mu=|D|^{-1}\mathbf{1}_D$, Lemma \ref{lem:hedCapacity} becomes $E_h(D)\geq \frac{|D|^{2h}}{\|A\|^{2h-2}|A|}$, where $E_h(D)$ is the higher additive energy; this recovers Bedert~\cite[Corollary 5.5]{bedert2025largesumfreesubsetssets} for $f=\mathbf{1}_A$ and $h=2$.

\begin{lemma}[Capacity transfers through Freiman models]\label{lem:capacityTransfer} Let $A\subset \mathbb{Z}^m$, $D\subset A$, and let $\phi:D\rightarrow D'\subset \mathbb{Z}^d$ be a bijection Freiman $h$-homomorphism. Then, \begin{align*}
    \mathrm{Cap}_h(D')\leq \|A\|^{2h-2}|A|.
\end{align*}
\end{lemma}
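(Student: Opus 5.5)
The plan is to pull a measure on $D'$ back to $D$ through $\phi^{-1}$, show that this does not decrease the $h$-fold energy, and then invoke Lemma~\ref{lem:hedCapacity} for $D\subset A$.

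Fix an arbitrary $\nu\in\mathcal{P}(D')$. Since $\phi$ is a bijection, we may define $\mu\in\mathcal{P}(D)$ by $\mu(x):=\nu(\phi(x))$ for $x\in D$. By Lemma~\ref{lem:hedCapacity}, $\mathcal{E}_h(\mu)\ge (|A|\|A\|^{2h-2})^{-1}$. It therefore suffices to prove
\begin{align*}
\mathcal{E}_h(\nu)\geq \mathcal{E}_h(\mu).
\end{align*}
Taking the infimum over $\nu$ then gives $\mathrm{Cap}_h(D')\le \|A\|^{2h-2}|A|$.

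For the comparison, use the combinatorial expansion
\begin{align*}
\mathcal{E}_h(\mu)=\sum_{x_1+\cdots+x_h=y_1+\cdots+y_h}\prod_i\mu(x_i)\mu(y_i),
\end{align*}
where the sum runs over tuples in $D^{2h}$. Since $\phi$ is a Freiman $h$-homomorphism, every tuple $(x,y)\in D^{2h}$ with $\sum x_i=\sum y_i$ maps to a tuple $(\phi(x),\phi(y))\in D'^{2h}$ with $\sum\phi(x_i)=\sum\phi(y_i)$. Because $\phi$ is a bijection, the induced map on $2h$-tuples is injective. The weights agree, since $\prod\mu(x_i)\mu(y_i)=\prod\nu(\phi(x_i))\nu(\phi(y_i))$. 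Hence the sum defining $\mathcal{E}_h(\mu)$ is a sub-sum of the sum defining $\mathcal{E}_h(\nu)$, and all terms are nonnegative, which gives the desired inequality.

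Note that only the forward homomorphism direction is needed: extra additive coincidences in $D'$ only increase its energy, which is the favorable direction for an upper bound on capacity. There is no real obstacle here. The one point needing care is the direction of the implication: the homomorphism must go from $D$ to $D'$, which is exactly what guarantees that every relation in $D$ persists in $D'$.
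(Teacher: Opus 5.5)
Your proposal is correct and follows essentially the same route as the paper. The paper's proof also pulls $\nu\in\mathcal{P}(D')$ back to $\mu=\nu\circ\phi$ on $D$, observes that relations counted in $\mathcal{E}_h(\mu)$ map injectively to relations counted in $\mathcal{E}_h(\nu)$ with equal weights, and then applies Lemma~\ref{lem:hedCapacity} and takes the infimum over $\nu$.
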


\begin{proof}
    Fix $\nu\in \mathcal{P}(D')$ and pull back to $D$ by setting $\mu(d)=\nu(\phi(d))$. Because $\phi$ is bijective, this gives a bijection between $\mathcal{P}(D')$ and $\mathcal{P}(D)$. Every additive relation counted by $\mathcal{E}_h(\mu)$ maps to a corresponding one counted by $\mathcal{E}_h(\nu)$, and the corresponding weights are equal, so $\mathcal{E}_h(\mu)\leq \mathcal{E}_h(\nu)$. Lemma \ref{lem:hedCapacity} gives that $\mathcal{E}_h(\nu)\geq \mathcal{E}_h(\mu)\geq \frac{1}{\|A\|^{2h-2}|A|}$, and so taking the infimum over $\nu\in \mathcal{P}(D')$ provides the result.
\end{proof}

\begin{remark} If $\phi$ is a Freiman $h$-isomorphism, then the two weighted energies are equal (and thus the capacities are as well).
\end{remark}

\section{Super-additivity of additive capacity under partition decoupling}\label{sec:partitionDecoupling}

One key property of this additive capacity notion is that it has a sort of super-additivity that is useful in applications. First, let us recall some terminology from decoupling theory. We specialize to the integer lattice $\mathbb{Z}^d$ and its dual $\mathbb{T}^d$ for concreteness.

\begin{definition}[Partition-decoupling] Let $S\subset \mathbb Z^d$ be a finite set and $S=\bigsqcup_{\theta\in \Theta}S_\theta$ be a partition; take an integer $h\geq 2$. We say that the partition has $2h$-decoupling constant $K$ if \begin{align}\label{eq:partitionDecoupling}
    \Big\|\sum_{s\in S}a(s)e(s\cdot )\Big\|_{L^{2h}(\mathbb{T}^d)}\leq K\Big(\sum_{\theta\in \Theta}\Big\|\sum_{s\in S_\theta}a(s)e(s\cdot )\Big\|_{L^{2h}(\mathbb{T}^d)}^2\Big)^{1/2}
\end{align} for every $a: S\rightarrow \mathbb{C}$.
\end{definition}

\begin{remark} In practice, we will choose $S$ to be a subset of a larger ambient object (e.g. a high-dimensional sphere), and so the sets $S_\theta$, which are inherited from a partition of this larger object, may well be empty. \end{remark}

The consequence is then as follows.

\begin{theorem}[Super-additivity of additive capacity]\label{thm:ACsubAdditive}
    Under the partition-decoupling hypothesis (\ref{eq:partitionDecoupling}), one has that \begin{align*}
        \mathrm{Cap}_h(S)^{1/h}\geq K^{-2}\sum_{\theta\in \Theta}\mathrm{Cap}_h(S_\theta)^{1/h}.
    \end{align*} Here, the sum runs over nonempty $S_\theta$.
\end{theorem}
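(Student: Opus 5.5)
Let $c_\theta:=\mathrm{Cap}_h(S_\theta)^{1/h}$ for the nonempty pieces, and put $C:=\sum_\theta c_\theta$. For each nonempty $\theta$, fix a minimizer $\mu_\theta\in\mathcal{P}(S_\theta)$, so that $\mathcal{E}_h(\mu_\theta)=\mathrm{Cap}_h(S_\theta)^{-1}$; it exists by the compactness remark in the definition. The plan is to test the infimum defining $\mathrm{Cap}_h(S)$ against the mixture
\begin{align*}
    \mu:=\sum_{\theta}\lambda_\theta\mu_\theta,\qquad \lambda_\theta:=\frac{c_\theta}{C}.
\end{align*}
This is a probability measure supported on $S$, because the $S_\theta$ are disjoint and the weights sum to one. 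It then suffices to show $\mathcal{E}_h(\mu)\le K^{2h}C^{-h}$. Taking reciprocals and $h$-th roots gives $\mathrm{Cap}_h(S)^{1/h}\ge K^{-2}C$, which is the claim.

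To bound $\mathcal{E}_h(\mu)$, write it as $\|\widehat{\mu}\|_{L^{2h}(\mathbb{T}^d)}^{2h}$ and apply the decoupling hypothesis (\ref{eq:partitionDecoupling}) with coefficients $a(s)=\mu(s)$. On each $S_\theta$ we have $\mu=\lambda_\theta\mu_\theta$, so the piece of $\widehat\mu$ coming from $S_\theta$ is exactly $\lambda_\theta\widehat{\mu_\theta}$. Decoupling therefore gives
\begin{align*}
    \|\widehat{\mu}\|_{2h}^2\le K^2\sum_\theta\lambda_\theta^2\|\widehat{\mu_\theta}\|_{2h}^2=K^2\sum_\theta\lambda_\theta^2\,\mathcal{E}_h(\mu_\theta)^{1/h}=K^2\sum_\theta\lambda_\theta^2c_\theta^{-1}.
\end{align*}
Empty pieces contribute nothing to either side, so they can be discarded.

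Substituting $\lambda_\theta=c_\theta/C$ turns the last sum into $\sum_\theta c_\theta/C^2=1/C$. Hence $\|\widehat\mu\|_{2h}^2\le K^2/C$, and raising to the $h$-th power gives $\mathcal{E}_h(\mu)\le K^{2h}C^{-h}$, as required.

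The only real choice is the weights $\lambda_\theta$. Minimizing the quadratic form $\sum_\theta\lambda_\theta^2/c_\theta$ subject to $\sum_\theta\lambda_\theta=1$, by Lagrange multipliers or Cauchy--Schwarz, gives $\lambda_\theta\propto c_\theta$, which is the choice above. I expect no serious obstacle beyond two checks. First, the decoupling inequality must apply to arbitrary coefficient functions $a$, including nonnegative measure weights; the definition quantifies over every $a:S\to\mathbb{C}$, so it does. Second, the identity $\|\widehat{\mu_\theta}\|_{2h}^{2h}=\mathcal{E}_h(\mu_\theta)$ from the previous section must hold, which it does.
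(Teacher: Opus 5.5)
Your proposal is correct and is essentially the paper's argument: the same mixture $\mu=\sum_\theta t_\theta\mu_\theta$, the same decoupling step, and the same optimal weights $t_\theta\propto \mathrm{Cap}_h(S_\theta)^{1/h}$. The only difference is that you use exact minimizers, which exist because $\mathcal{P}(S_\theta)$ is a compact simplex and $\mathcal{E}_h$ is continuous. The paper instead uses $(1+\eta)$-near-minimizers and lets $\eta\to 0$, and both routes are valid.
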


\begin{proof}
    Take $\eta>0$. Relabel $(S_\theta)_{\theta\in \Theta}$ as necessary to exclude empty sets. For each remaining $S_\theta$, choose $\mu_\theta\in \mathcal{P}(S_\theta)$ such that \begin{align}\label{eq:partitionNearMinimizer}
        \Big\|\sum_{s\in S_\theta}\mu_\theta(s)e(s\cdot )\Big\|_{2h}\leq (1+\eta)\text{Cap}_h(S_\theta)^{-1/2h}
    \end{align} (this is essentially choosing a measure $\mu_\theta$ that is a near-minimizer to the one that attains the infimum in the definition of $\text{Cap}_h(S_\theta)$). Let $(t_\theta)_{\theta\in \Theta}\geq 0$ with $\sum_{\theta\in \Theta}t_\theta=1$, and set $\mu=\sum_\theta t_\theta \mu_\theta$, which is a probability measure on $S$.  Applying (\ref{eq:partitionDecoupling}) and (\ref{eq:partitionNearMinimizer}), we obtain that \begin{align*}
        \Big\|\sum_{s\in S}\mu(s)e(s\cdot)\Big\|_{2h}\leq K(1+\eta)\Big(\sum_{\theta\in\Theta}t_\theta^2\text{Cap}_h(S_\theta)^{-1/h}\Big)^{1/2}.
    \end{align*} The expression on the right is minimized, subject to $\sum_\theta t_\theta =1$, by \begin{align*}
        t_\theta= \frac{\text{Cap}_h(S_\theta)^{1/h}}{\sum_{\theta'}\text{Cap}_h(S_{\theta'})^{1/h}}.
    \end{align*} Using this choice of coefficients then provides that $\sum_{\theta}t_\theta^2\text{Cap}_h(S_\theta)^{-1/h}=(\sum_\theta \text{Cap}_h(S_\theta)^{1/h})^{-1}$, and so \begin{align*}
        \inf_{\mu\in \mathcal{P}(S)}\Big\|\sum_{s\in S}\mu(s)e(s\cdot )\Big\|_{2h}\leq K(1+\eta)\Big(\sum_\theta \text{Cap}_h(S_\theta)^{1/h}\Big)^{-1/2}.
    \end{align*} Raising to the $2h$-th power provides then that \begin{align*}
       \inf_{\mu\in \mathcal{P}(S)}\mathcal{E}_h(\mu)\leq K^{2h}(1+\eta)^{2h}\Big(\sum_\theta\text{Cap}_h(S_\theta)^{1/h}\Big)^{-h},
    \end{align*} and so \begin{align*}
        \text{Cap}_h(S)\geq K^{-2h}(1+\eta)^{-2h}\Big(\sum_\theta\text{Cap}_h(S_\theta)^{1/h}\Big)^h.
    \end{align*} Since $\eta>0$ was arbitrary, taking $\eta\rightarrow 0$ then provides that \begin{align*}
        \text{Cap}_h(S)\geq K^{-2h}\Big(\sum_\theta \text{Cap}_h(S_\theta)^{1/h}\Big)^h,
    \end{align*} and taking $h$-th roots then provides the result.
\end{proof}

The following is a more general formulation of Theorem \ref{thm:masterThm}.

\begin{theorem}[Partition-decoupling forces large Fourier norm]\label{thm:decouplingFourierNorm}
    Let $A\subset \mathbb{Z}^m$ be finite, and suppose that $D\subset A$ admits a bijective Freiman $h$-homomorphism onto $S\subset \mathbb{Z}^n$. Suppose $S=\bigsqcup_{\theta\in \Theta}S_\theta$ satisfies the partition-decoupling condition (\ref{eq:partitionDecoupling}) with constant $K$. Then, \begin{align*}
        \sum_{\theta\in \Theta}\mathrm{Cap}_h(S_\theta)^{1/h}\leq K^2|A|^{1/h}\|A\|^{2-2/h}.
    \end{align*}
\end{theorem}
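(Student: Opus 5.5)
The plan is to chain the two results just established: super-additivity of capacity under partition decoupling (Theorem \ref{thm:ACsubAdditive}) for the lower bound, and capacity transfer through Freiman models (Lemma \ref{lem:capacityTransfer}) for the upper bound.

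First, since $S=\bigsqcup_\theta S_\theta$ satisfies (\ref{eq:partitionDecoupling}) with constant $K$, Theorem \ref{thm:ACsubAdditive} gives
\begin{align*}
\sum_{\theta\in\Theta}\mathrm{Cap}_h(S_\theta)^{1/h}\leq K^2\,\mathrm{Cap}_h(S)^{1/h}.
\end{align*}
The sum in that theorem runs only over nonempty $S_\theta$. The convention $\mathrm{Cap}_h(\emptyset)=0$ means empty parts contribute nothing, so the sum over all of $\Theta$ is the same.

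Second, I would bound $\mathrm{Cap}_h(S)$. The map $\phi:D\to S$ is a bijective Freiman $h$-homomorphism with $D\subset A\subset\mathbb{Z}^m$ and $S\subset\mathbb{Z}^n$. Lemma \ref{lem:capacityTransfer} therefore applies verbatim, with its target dimension $d$ taken to be $n$, and yields
\begin{align*}
\mathrm{Cap}_h(S)\leq \|A\|^{2h-2}|A|.
\end{align*}
Taking $h$-th roots gives $\mathrm{Cap}_h(S)^{1/h}\leq |A|^{1/h}\|A\|^{2-2/h}$.

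Combining the two displays gives the claim. Nothing here is a genuine obstacle; the only points to check are bookkeeping. One is that Lemma \ref{lem:capacityTransfer} really uses only the forward (homomorphism) direction plus bijectivity, which it does, since pulling back a measure along $\phi$ can only reduce the energy. The other is the treatment of empty parts noted above.
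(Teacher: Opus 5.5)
Your proposal is correct and follows the paper's proof exactly: Theorem~\ref{thm:ACsubAdditive} gives $\sum_\theta \mathrm{Cap}_h(S_\theta)^{1/h}\le K^2\,\mathrm{Cap}_h(S)^{1/h}$, and Lemma~\ref{lem:capacityTransfer} gives $\mathrm{Cap}_h(S)^{1/h}\le |A|^{1/h}\|A\|^{2-2/h}$. Your two bookkeeping remarks are accurate points the paper leaves implicit: empty parts contribute nothing, and the transfer needs only the forward homomorphism direction together with bijectivity.
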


\begin{proof}
     By Theorem \ref{thm:ACsubAdditive} one obtains that \begin{align*}
        \text{Cap}_h(S)^{1/h}\geq K^{-2}\sum_\theta\mathrm{Cap}_h(S_\theta)^{1/h}.
    \end{align*} Lemma \ref{lem:capacityTransfer} gives that $\text{Cap}_h(S)^{1/h}\leq |A|^{1/h}\|A\|^{2-2/h}$. Combining these two inequalities then gives that 
    \[
        \sum_{\theta\in\Theta}\text{Cap}_h(S_\theta)^{1/h}\leq K^2|A|^{1/h}\|A\|^{2-2/h}. \qedhere
    \]
\end{proof}

\begin{corollary}[Concentration bound]\label{cor:concentration} Under the hypotheses of Theorem \ref{thm:decouplingFourierNorm}, let \begin{align*}
    T:=\#\{\theta\in\Theta : S_\theta\neq \varnothing\},\quad n_{max}(S,\Theta):=\max_{\theta\in \Theta}|S_\theta|.
\end{align*} Then, one has \begin{enumerate}
    \item[(1)] An upper bound on $T$: \begin{align*}
        T \leq K^2|A|^{1/h}\|A\|^{2-2/h}.
    \end{align*}
    \item[(2)] A lower bound on $n_{max}(S,\Theta)$: \begin{align*}
        n_{max}(S,\Theta)\geq \Big(\frac{|S|}{K^2|A|^{1/h}\|A\|^{2-2/h}}\Big)^{\frac{h}{h-1}}.
    \end{align*}
\end{enumerate} In addition, if $|S_\theta|\leq U$ for every $\theta$ (so that $S$ is well-distributed w.r.t. the partition), then \begin{align*}
     \|A\|\geq \Big(\frac{|S|}{U^{1-1/h}K^2|A|^{1/h}}\Big)^{\frac{h}{2h-2}}.
\end{align*}
    
\end{corollary}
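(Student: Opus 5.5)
All three parts will be read off from Theorem \ref{thm:decouplingFourierNorm},
\begin{align*}
    \sum_{\theta}\mathrm{Cap}_h(S_\theta)^{1/h}\leq K^2|A|^{1/h}\|A\|^{2-2/h}=:M,
\end{align*}
combined with elementary two-sided bounds on the capacity of a single nonempty finite set. The sum runs over the nonempty $S_\theta$, since empty pieces contribute $\mathrm{Cap}_h(\emptyset)=0$.

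\textbf{Lower bound on capacity.} For any probability measure $\mu$ we have
\begin{align*}
    \mathcal{E}_h(\mu)=\mathbb{P}(X_1+\cdots+X_h=Y_1+\cdots+Y_h)\leq 1,
\end{align*}
so $\mathrm{Cap}_h(S_\theta)\geq 1$ whenever $S_\theta\neq\varnothing$. Summing over the nonempty pieces gives $T\leq M$, which is part (1).

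\textbf{Uniform measure.} Let $\mu_\theta$ be uniform on $S_\theta$. Then
\begin{align*}
    \mathcal{E}_h(\mu_\theta)=\frac{E_h(S_\theta)}{|S_\theta|^{2h}}.
\end{align*}
To bound $E_h(S_\theta)$ from above, fix $x_1,\dots,x_h,y_1,\dots,y_{h-1}$; then $y_h$ is determined, so $E_h(S_\theta)\leq |S_\theta|^{2h-1}$. Hence
\begin{align*}
    \mathcal{E}_h(\mu_\theta)\leq |S_\theta|^{-1},\qquad \mathrm{Cap}_h(S_\theta)\geq |S_\theta|.
\end{align*}
Plugging this in yields $\sum_\theta |S_\theta|^{1/h}\leq M$.

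\textbf{From $\sum|S_\theta|^{1/h}$ to the claims.} Write
\begin{align*}
    |S_\theta|=|S_\theta|^{1/h}\,|S_\theta|^{1-1/h}\leq |S_\theta|^{1/h}\,n_{max}^{1-1/h}.
\end{align*}
Summing over $\theta$ gives $|S|\leq M\,n_{max}^{(h-1)/h}$. Rearranging, $n_{max}\geq (|S|/M)^{h/(h-1)}$, which is part (2).

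For the final claim, run the same argument with $U$ in place of $n_{max}$ to get $|S|\leq M\,U^{1-1/h}$, that is,
\begin{align*}
    \|A\|^{2-2/h}\geq \frac{|S|}{U^{1-1/h}K^2|A|^{1/h}}.
\end{align*}
Raising both sides to the power $h/(2h-2)$ gives the stated lower bound on $\|A\|$.

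\textbf{Where care is needed.} The only step requiring thought is the capacity lower bound $\mathrm{Cap}_h(S_\theta)\geq|S_\theta|$. It rests on the trivial energy bound $E_h(X)\leq|X|^{2h-1}$, which holds because $y_h$ is determined by the other $2h-1$ variables. Everything else is Hölder-type bookkeeping.
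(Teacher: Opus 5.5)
Your proof is correct and follows the paper's argument. Like the paper, you insert $\mathrm{Cap}_h(S_\theta)\geq |S_\theta|$ into Theorem \ref{thm:decouplingFourierNorm} to get $\sum_\theta |S_\theta|^{1/h}\leq K^2|A|^{1/h}\|A\|^{2-2/h}$, then bound $|S_\theta|^{1-1/h}$ by $n_{max}^{1-1/h}$ (or by $U^{1-1/h}$); your uniform-measure argument with $E_h(X)\leq |X|^{2h-1}$ also proves the capacity bound $\mathrm{Cap}_h(S_\theta)\geq |S_\theta|$, which the paper only asserts.
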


\begin{proof}
    First, note that from $\S\ref{sec:additiveCapacity}$, we have that $\text{Cap}_h(S_\theta)\geq |S_\theta|$. Applying this to Theorem \ref{thm:decouplingFourierNorm} then gives that \begin{align*}
        \sum_{\theta}|S_\theta|^{1/h}\leq K^2|A|^{1/h}\|A\|^{2-2/h}.
    \end{align*} Every nonempty $S_\theta$ contributes at least one to the left-hand side of the statement in Theorem \ref{thm:decouplingFourierNorm}, and so $T\leq K^2|A|^{1/h}\|A\|^{2-2/h}$, providing (1). On the other hand, since $|S_\theta|\leq n_{max}(S,\Theta)$ for each $\theta\in \Theta$ and $1/h-1<0$, \begin{align*}
        \sum_{\theta}|S_\theta|^{1/h}=\sum_{\theta:S_\theta\neq\emptyset} |S_\theta|\cdot |S_\theta|^{1/h-1}\geq n_{max}(S,\Theta)^{1/h-1}\sum_\theta|S_\theta|=n_{max}(S,\Theta)^{1/h-1}|S|.
    \end{align*} Combining this with Theorem \ref{thm:decouplingFourierNorm} then provides that \begin{align*}
        n_{max}(S,\Theta)^{1/h-1}|S|\leq K^2|A|^{1/h}\|A\|^{2-2/h}.
    \end{align*} Rearranging then provides that \begin{align*}
        n_{max}(S,\Theta)\geq \Big(\frac{|S|}{K^2|A|^{1/h}\|A\|^{2-2/h}}\Big)^{\frac{h}{h-1}},
    \end{align*} yielding (2). Finally, if $|S_\theta|\leq U$ for all $\theta$, then \begin{align*}
        |S|=\sum_\theta |S_\theta|\leq U^{1-1/h}\sum_\theta |S_\theta|^{1/h}\leq U^{1-1/h}K^2|A|^{1/h}\|A\|^{2-2/h},
    \end{align*} and so 
    \[
        \|A\|\geq \Big(\frac{|S|}{U^{1-1/h}K^2|A|^{1/h}}\Big)^{\frac{h}{2h-2}}.\qedhere
    \]
\end{proof}

Let us next define hereditary energy. For $h=2$, the following definition is due to Sanders \cite{sandersEnergy}; the higher-order version here is the more general $h$-fold analog.

\begin{definition}[Hereditary $h$-energy] Let $0<\nu\leq 1$. A nonempty finite set $X\subset G$ is called \emph{$\nu$-hereditarily $h$-energetic}, if, whenever $Y\subseteq X$ and $|Y|\geq \delta |X|$ with $0<\delta\leq 1$, one has \begin{align*}
    E_h(Y)\geq \nu\delta |Y|^{2h-1}.
\end{align*} For $h=2$ we simply say $\nu$-hereditarily energetic.
    
\end{definition}

\begin{remark} Large energy alone does not imply hereditary energy: Sanders \cite[Section 3]{sandersEnergy} gives the example of adjoining a large dissociated set with a high-energy set. Hereditarily energetic sets are significantly more structured than those with simply high energy (on the spectrum of structure, they lie between small-doubling and high energy), since they require large energy on every dense subset.\end{remark}

\begin{theorem}[Hereditary energy on good parts]\label{thm:herEnergy} Assume the hypotheses of Theorem \ref{thm:decouplingFourierNorm}. Let $\alpha:=|D|/|A|=|S|/|A|$. Then, for every $\rho\in (0,1)$, there is a set of indices $\Theta_{good}\subseteq \Theta$ such that $|S_\theta|>0$ for $\theta\in \Theta_{good}$, \begin{align*}
    |\Theta_{good}|\leq K^2|A|^{1/h}\|A\|^{2-2/h},\quad \sum_{\theta\in \Theta_{good}}|S_\theta|\geq (1-\rho)|S|,
\end{align*} and every $S_\theta$ with $\theta\in \Theta_{good}$ is $\nu$-hereditarily $h$-energetic, where \begin{align*}
    \nu=\frac{\rho \alpha}{K^{2h}\|A\|^{2h-2}}.
\end{align*} Moreover, for each $\theta\in \Theta_{good}$, \begin{align*}
    \mathrm{Cap}_h(S_\theta)\leq \frac{K^{2h}\|A\|^{2h-2}}{\rho \alpha}|S_\theta|.
\end{align*}
    
\end{theorem}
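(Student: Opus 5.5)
The plan is to combine the summed capacity bound of Theorem \ref{thm:decouplingFourierNorm} with a threshold (Markov-type) selection of good parts, and then use monotonicity of capacity to pass to subsets. Write $M:=K^2|A|^{1/h}\|A\|^{2-2/h}$, so that Theorem \ref{thm:decouplingFourierNorm} gives $\sum_{\theta}\mathrm{Cap}_h(S_\theta)^{1/h}\leq M$, the sum running over nonempty parts. Since $|S|=\alpha|A|$, we have the identity
\begin{align*}
\frac{M^h}{\rho|S|}=\frac{K^{2h}\|A\|^{2h-2}}{\rho\alpha}=\nu^{-1}.
\end{align*}

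\textbf{Choice of good parts.} Define $\Theta_{good}$ as the set of $\theta$ with $S_\theta\neq\varnothing$ and $\mathrm{Cap}_h(S_\theta)\leq \nu^{-1}|S_\theta|$. With this choice the final displayed claim holds by definition. For the count, each nonempty $S_\theta$ has $\mathrm{Cap}_h(S_\theta)\geq 1$, since a point mass $\mu$ has $\mathcal{E}_h(\mu)\leq 1$. Each good index therefore contributes at least $1$ to the sum, which gives $|\Theta_{good}|\leq M$.

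\textbf{Bad mass.} This is the step requiring care. Call a nonempty $\theta$ bad if $\mathrm{Cap}_h(S_\theta)>\nu^{-1}|S_\theta|$. Taking $h$-th roots, a bad $\theta$ satisfies
\begin{align*}
\mathrm{Cap}_h(S_\theta)^{1/h}>\frac{M}{(\rho|S|)^{1/h}}\,|S_\theta|^{1/h}.
\end{align*}
Summing over bad $\theta$ and using the total bound $M$ yields $\sum_{\mathrm{bad}}|S_\theta|^{1/h}<(\rho|S|)^{1/h}$. Because $x\mapsto x^h$ is superadditive on $[0,\infty)$ for $h\geq1$, we get
\begin{align*}
\sum_{\mathrm{bad}}|S_\theta|\leq\Big(\sum_{\mathrm{bad}}|S_\theta|^{1/h}\Big)^h<\rho|S|.
\end{align*}
Empty parts carry no mass, so the good parts carry at least $(1-\rho)|S|$.

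\textbf{Hereditary energy.} Fix $\theta\in\Theta_{good}$ and $Y\subseteq S_\theta$ with $|Y|\geq\delta|S_\theta|$, and let $\mu=|Y|^{-1}\mathbf 1_Y$ be the uniform measure. Then $\mathcal{E}_h(\mu)=E_h(Y)/|Y|^{2h}\geq \mathrm{Cap}_h(Y)^{-1}$. Capacity is monotone under inclusion, so $\mathrm{Cap}_h(Y)\leq\mathrm{Cap}_h(S_\theta)\leq\nu^{-1}|S_\theta|$. Combining these,
\begin{align*}
E_h(Y)\geq \nu\frac{|Y|^{2h}}{|S_\theta|}\geq \nu\delta|Y|^{2h-1},
\end{align*}
which is the $\nu$-hereditary $h$-energy property. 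The only genuinely nontrivial step is the bad-mass estimate, where the threshold must be calibrated so that the $\ell^{1/h}$ bound converts into an $\ell^1$ mass bound via superadditivity.
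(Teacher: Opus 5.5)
Your proposal is correct and follows essentially the same route as the paper's proof: the same threshold $\mathrm{Cap}_h(S_\theta)\leq \nu^{-1}|S_\theta|$, the same conversion of the $\ell^{1/h}$ bound into a bad-mass bound via $\sum_j x_j^{1/h}\geq(\sum_j x_j)^{1/h}$, and the same uniform-measure argument on $Y$. The only cosmetic difference is that you bound $|\Theta_{good}|$ directly from $\mathrm{Cap}_h(S_\theta)\geq 1$, whereas the paper cites Corollary \ref{cor:concentration}.
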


\begin{proof}
    For ease of notation, set $Q:=\frac{K^{2h}\|A\|^{2h-2}}{\rho \alpha}$. For $\theta\in \Theta$, call $\theta$ bad if $|S_\theta|>0$ and $\text{Cap}_h(S_\theta)>Q|S_\theta|$. If $M_{bad}:=\sum_{\theta\text{ bad}}|S_\theta|$, then Theorem \ref{thm:decouplingFourierNorm} and the inequality $\sum_j x_j^{1/h}\geq (\sum_j x_j)^{1/h}$ give \begin{align*}
        K^2|A|^{1/h}\|A\|^{2-2/h}&\geq \sum_{\theta\text{ bad}}\text{Cap}_h(S_\theta)^{1/h} \\ &>Q^{1/h}\sum_{\theta\text{ bad}}|S_\theta|^{1/h} \\ &\geq Q^{1/h}M_{bad}^{1/h}.
    \end{align*} Raising to the $h$th power and substituting $|A|=|S|\alpha^{-1}$ gives that \begin{align*}
        M_{bad}\leq Q^{-1}K^{2h}|S|\alpha^{-1}\|A\|^{2h-2}=\rho |S|.
    \end{align*} Thus, $\sum_{\theta\not\in \Theta_{bad}}|S_\theta|\geq (1-\rho)|S|.$ We will set $\Theta_{good}$ to be the set of indices $\theta$ that are not bad, and such that $|S_\theta|>0$.\\

    The size bound on $\Theta_{good}$ simply comes from the concentration bound, Corollary \ref{cor:concentration}.\\

    Now, we show the latter part of the theorem. Fix a good nonempty subset $S_\theta$ and a subset $Y\subseteq S_\theta$ with $|Y|=\delta |S_\theta|$. Note that $\text{Cap}_h(S_\theta)\leq Q|S_\theta|$, and so every probability measure $\mu$ on $S_\theta$ satisfies \begin{align*}
        \mathcal{E}_h(\mu)\geq \frac{1}{Q|S_\theta|}.
    \end{align*} Now, take $\mu$ to be the uniform probability measure on $Y$. Since its weighted energy is $E_h(Y)/|Y|^{2h}$, this provides then that \begin{align*}
        E_h(Y)\geq \frac{|Y|^{2h}}{Q|S_\theta|}=\frac{\delta}{Q}|Y|^{2h-1}.
    \end{align*} Since $Q^{-1}=\nu$, this provides the desired statement.    
\end{proof}

\section{Proofs of the main spherical theorems}\label{sec:mainProofs}

In this section we provide proofs of Theorems \ref{thm:iteratedDimensionReduction}, \ref{thm:mainThm2}, \ref{thm:mainThm3}. 

We begin with some notation. We take $A\subset \mathbb{Z}^m$ with $D\subseteq A$ Freiman 2-isomorphic to some $S\subseteq \Sigma$ where $\Sigma=\mathcal{F}_{d,R^2}$ is the sphere of radius $R$ in $\mathbb{Z}^d$. We write $P=c+W$ for an affine subspace, where $W\subset \mathbb{R}^d$ is a subspace containing zero and $c\in W^\perp$. For $z\in \mathbb{R}^d$ we will write $H_z$ for \begin{align*}
    H_z:=\{u\in \mathbb{R}^d:u\cdot z = \frac{|z|^2}{2}\}.
\end{align*}

Let us begin with some general results that will be helpful. We first have a lemma that essentially tells us that large hereditary energy on a sphere allows one to pass to a rich fiber.

\begin{lemma}\label{lem:richFibers}
    Let $P=c+W\subset \mathbb{R}^d$ have dimension at least 2, and $\varnothing\neq X\subseteq S\cap P$. If $|P\cap \Sigma|\geq 2$ then there exists $z\in X+X$, with $z\neq 2c$, such that \begin{align*}
        |X\cap (z-X)|\geq \frac{|X|^2}{4\mathrm{Cap}_2(X)}.
    \end{align*} Moreover, \begin{align*}
        X\cap (z-X)\subseteq P\cap H_z\cap S,
    \end{align*} where $P\cap H_z$ is a proper affine hyperplane in $P$.
\end{lemma}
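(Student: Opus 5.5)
The plan is to apply the defining property of $\mathrm{Cap}_2$ to the uniform measure on $X$, and to use the geometry of $\Sigma\cap P$ to control the one exceptional sum $z=2c$.

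\textbf{Geometry of $\Sigma\cap P$.} Every $u\in P$ can be written $u=c+v$ with $v\in W$ and $c\perp W$, so $|u|^2=|c|^2+|v|^2$. Hence $\Sigma\cap P$ is the sphere in $P$ centred at $c$ of radius $r_0:=(R^2-|c|^2)^{1/2}$. The hypothesis $|P\cap\Sigma|\geq 2$ forces $r_0>0$, and therefore $c\notin\Sigma$.

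\textbf{Counting representations.} Write $r(z):=|X\cap(z-X)|$ and $M:=\max_{z\neq 2c}r(z)$. Two facts bound the exceptional sum and give a trivial lower bound on $M$.
\begin{itemize}
\item Each $x\in X$ has at most one partner $2c-x$, its antipode in $P$. So $r(2c)\leq |X|$.
\item Pick any $x\in X$ and set $z=2x$. Then $r(z)\geq 1$, and $2x\neq 2c$ because $c\notin\Sigma$. So $M\geq 1$.
\end{itemize}

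\textbf{Energy bound.} Let $\mu$ be the uniform probability measure on $X$. The definition of capacity gives $E_2(X)/|X|^4=\mathcal{E}_2(\mu)\geq \mathrm{Cap}_2(X)^{-1}$, that is, $E_2(X)\geq |X|^4/\mathrm{Cap}_2(X)$. On the other hand, using $\sum_z r(z)=|X|^2$,
\begin{align*}
E_2(X)=\sum_z r(z)^2\leq r(2c)^2+M\sum_{z\neq 2c}r(z)\leq |X|^2+M|X|^2 .
\end{align*}
Put $t:=|X|^2/\mathrm{Cap}_2(X)$. Combining the two bounds gives $M\geq t-1$. Together with $M\geq 1$ this yields $M\geq\max\{1,t-1\}\geq t/2\geq t/4$. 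This is the claimed bound, attained at some $z\neq 2c$ with $r(z)=M\geq 1$, so $z\in X+X$. The only delicate point is that the antipodal sum $2c$ may carry up to $|X|$ representations. The split above handles it, and it is also the only place where the hypothesis $|P\cap\Sigma|\geq 2$ is used.

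\textbf{The hyperplane statement.} Let $x\in X\cap(z-X)$. Then $x$ and $z-x$ both lie on $\Sigma$, so $|x|^2=|z-x|^2$, which expands to $x\cdot z=|z|^2/2$. Hence $x\in P\cap H_z\cap S$.

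It remains to see that $P\cap H_z$ is a proper hyperplane in $P$. Since $z\in X+X\subseteq 2c+W$, write $z=2c+w$ with $w\in W$; here $w\neq 0$ because $z\neq 2c$. For $u=c+v\in P$ we get $u\cdot z=2|c|^2+v\cdot w$, a nonconstant affine function of $v\in W$. So its level set $P\cap H_z$ is a proper affine hyperplane of $P$. It is nonempty, since it contains a point of $X\cap(z-X)$.
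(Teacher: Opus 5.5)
Your proof is correct, but it handles the exceptional sum $z=2c$ differently from the paper.

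The paper picks a direction $u\in W$ with $u\cdot(x-c)\neq 0$ for every $x\in X$. It keeps the larger ``hemisphere'' $X_0\subseteq X$, so that $|X_0|\geq |X|/2$ and no two points of $X_0$ sum to $2c$. It then applies the pigeonhole bound $\max_z r_{X_0}(z)\geq E_2(X_0)/|X_0|^2\geq |X_0|^2/\mathrm{Cap}_2(X_0)$ together with monotonicity $\mathrm{Cap}_2(X_0)\leq \mathrm{Cap}_2(X)$. The factor $4$ is exactly the loss from $|X_0|^2\geq |X|^2/4$, and the maximizing $z$ lies in $X_0+X_0$, so $z\neq 2c$ is automatic with no case analysis.

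You instead stay on all of $X$. You absorb the single bad sum through the trivial bound $r(2c)\leq |X|$, which costs only an additive $|X|^2$ in $E_2(X)$. You cover the small-$t$ regime with the trivial representation $z=2x$. That step is legitimate because $|P\cap\Sigma|\geq 2$ forces $|c|<R$, hence $x\neq c$; this is the same use of the hypothesis the paper makes to ensure each $V_x$ is a proper subspace.

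Your route avoids the generic-direction construction and the monotonicity of capacity. It also yields the better constant $|X|^2/(2\,\mathrm{Cap}_2(X))$. The paper's route gives a uniform multiplicative bound without splitting on the size of $t$.

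Your hyperplane argument writes $z=2c+w$ with $w\in W\setminus\{0\}$, so that $u\cdot z=2|c|^2+v\cdot w$ is a nonconstant affine function on $P$. This is a direct version of the paper's argument by contradiction, which shows $P\subseteq H_z$ would force $z\perp W$ and hence $z=2c$.
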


\begin{proof}
    We first consider the structure of $P\cap \Sigma$. If $x\in P\cap \Sigma$, then $x=c+w$ for some $w\in W$, and so $R^2=|x|^2=|c+w|^2=|c|^2+|w|^2$; thus, \begin{align*}
        P\cap \Sigma = (c+\{w\in W:|w|^2=R^2-|c|^2\})\cap \mathbb{Z}^d.
    \end{align*} We then claim that we may find $u\in W$ such that $u\cdot (x-c)\neq 0$ for every $x\in X$. Indeed, if $V_x:=\{u\in W:u\cdot (x-c)=0\}$, then for each $x$, $V_x$ is a proper subspace of $W$ (since $x\neq c$, as by assumption $|P\cap \Sigma|\geq 2$), and $W\setminus \bigcup_{x\in X}V_x$ is nonempty by dimensional considerations. Partition $X$ into the two relative open hemispheres determined by the sign of $u\cdot (x-c)$, and let $X_0$ be the larger part. Then, $|X_0|\geq |X|/2$, and no pair $x,y\in X_0$ satisfies $x+y=2c$ (for if $x+y=2c$, then $0=(x-c)+(y-c)$, and so $0=u\cdot (x-c)+u\cdot (y-c)$; since both have same sign, $u\cdot (x-c)=u\cdot (y-c)=0$, a contradiction).

    For $z\in X+X$, write $r(z)=|X_0\cap (z-X_0)|$. Since \begin{align*}
        E_2(X_0)=\sum_z r(z)^2,\quad \sum_z r(z)=|X_0|^2,
    \end{align*} provides that \begin{align*}
        \max_z r(z)\geq \frac{E_2(X_0)}{|X_0|^2}\geq \frac{|X_0|^2}{\mathrm{Cap}_2(X_0)}\geq \frac{|X|^2}{4\mathrm{Cap}_2(X)}
    \end{align*} Here, we used that $E_2(X_0)\geq \frac{|X_0|^4}{\mathrm{Cap}_2(X_0)}$, by considering the uniform probability measure on $X_0$; we also used that $\mathrm{Cap}_2(X_0)\leq \mathrm{Cap}_2(X)$, since $X_0\subseteq X$. Choose a $z$ that maximizes $r(z)$; for such a $z$, we have $z\in X_0+X_0$; hence $z\neq 2c$.

    We now verify the last parts of the lemma. If $x,z-x\in \Sigma$, we have that $|x|^2=|z-x|^2$, and so expanding out the square gives that $x\cdot z=|z|^2/2$. Thus, $X\cap (z-X)\subseteq H_z$. Now consider when $P\subseteq H_z$. We observe that $z\perp W$ in such a case: since $c,c+w\in P$ for all $w\in W$, we have that $(c+w)\cdot z=\frac{|z|^2}{2}$ and $c\cdot z=\frac{|z|^2}{2}$, and subtracting the two gives $w\cdot z=0$ for all $w\in W$. Since $z=x+y$ for some $x,y\in X_0$, and since $x=c+w$, $y=c+v$ for some $w,v\in W$, we force $v+w=0$ by projecting onto $W$, and so $z=2c$, a contradiction. Thus, $P\not\subseteq H_z$, and so $P\cap H_z$ is a proper affine hyperplane in $P$ with codimension 1 in $P$.
\end{proof}

Our next proposition shows that, up to a controlled error, a given partition with an associated covering may be refined into a slightly larger partition, with a covering that is lower-dimensional.

\begin{proposition}[Partition capacity controls concentration]\label{prop:capacity controls concentration} Let $R>0$, $d\geq 2$ and let \begin{align*}
    X=\bigsqcup_{i\in I}X_i\subseteq  \mathcal{F}_{d,R^2}
\end{align*} be a partition into nonempty finite sets. Suppose that $X_i\subseteq P_i\cap \mathcal{F}_{d,R^2}$, where $P_i\subset \mathbb{R}^d$ is an affine subspace. Let $\mathcal{J}:=\{i\in I:\dim P_i\geq 2\}$. Then, for every $\delta\in (0,1)$, there exists a subset $\mathcal{J}'\subseteq \mathcal{J}$ (indices of the ``parents''), a set $X'\subseteq X$, and pairwise disjoint sets $Y_1,\cdots Y_J$ (the ``children'') satisfying the following: \begin{enumerate}
    \item[(i)] $|X\setminus X'|\leq \delta |X|$,
    \item[(ii)] $X'=\bigsqcup_{i\not\in \mathcal{J}}X_i\sqcup \bigsqcup_{j=1}^JY_j$,
    \item[(iii)] For every $j\in [J]$, there is an index $i(j)\in \mathcal{J}'$ and a proper affine hyperplane $Q_j\subsetneq P_{i(j)}$ such that \begin{align*}
        Y_j\subseteq X_{i(j)}\cap Q_j,
    \end{align*}
    \item[(iv)] \begin{align*}
        |\mathcal{J}'|\leq \frac{2\Big(\sum_{i\in \mathcal{J}}\mathrm{Cap}_2(X_i)^{1/2}\Big)^2}{\delta |X|},\quad J\ll \frac{\Big(\sum_{i\in \mathcal{J}}\mathrm{Cap}_2(X_i)^{1/2}\Big)^2}{\delta^2|X|}.
    \end{align*}
\end{enumerate} 
\end{proposition}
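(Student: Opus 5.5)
The plan is to peel rich fibers off each high-dimensional part by repeated use of Lemma~\ref{lem:richFibers}, discarding small parts in a budgeted way. Write $C_i:=\mathrm{Cap}_2(X_i)$ and $\Lambda:=\sum_{i\in\mathcal J}C_i^{1/2}$.

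\textbf{Step 1: select the parents.} Call $i\in\mathcal J$ heavy if $|X_i|\geq \delta|X|\,C_i^{1/2}/(2\Lambda)$, and discard the non-heavy $X_i$ entirely. Their total mass is at most $\sum_{i\in\mathcal J}\delta|X|C_i^{1/2}/(2\Lambda)=\delta|X|/2$. Let $\mathcal J'$ be the set of heavy indices. Since $C_i\geq |X_i|$ (uniform measure), each heavy $i$ satisfies
\begin{align*}
C_i^{1/2}\geq |X_i|^{1/2}\geq \frac{\delta|X|\,C_i^{1/2}}{2\Lambda |X_i|^{1/2}},
\end{align*}
so $|X_i|\geq \delta|X|/(2\Lambda)$. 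Summing $C_i^{1/2}$ over heavy $i$ then bounds $|\mathcal J'|$. To reach the stated form $2\Lambda^2/(\delta|X|)$, I would instead use the threshold in the form $C_i\leq 2\Lambda C_i^{1/2}|X_i|/(\delta|X|)$, equivalently $C_i^{1/2}\leq 2\Lambda|X_i|/(\delta|X|)$. The trivial bound $C_i^{1/2}\geq1$ then gives $|\mathcal J'|\leq\sum_{\mathcal J'}C_i^{1/2}\leq 2\Lambda^2/(\delta|X|)$.

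\textbf{Step 2: greedy peeling inside each parent.} Fix $i\in\mathcal J'$. If $|P_i\cap\mathcal F_{d,R^2}|\leq1$, the part is a single point and a hyperplane in $P_i$ through it covers it. Otherwise, set $Z:=X_i$ and repeat the following while $|Z|\geq \delta|X_i|/2$. Apply Lemma~\ref{lem:richFibers} to $X=Z\subseteq S\cap P_i$ to get $z$ with
\begin{align*}
|Z\cap(z-Z)|\geq \frac{|Z|^2}{4\mathrm{Cap}_2(Z)}\geq \frac{\delta^2|X_i|^2}{16C_i},
\end{align*}
using monotonicity $\mathrm{Cap}_2(Z)\leq C_i$. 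Take $Y_j:=Z\cap(z-Z)$ and $Q_j:=P_i\cap H_z$, which is proper by the lemma, then remove $Y_j$ from $Z$. The leftover in each parent is at most $\delta|X_i|/2$, so the total leftover is at most $\delta|X|/2$. Combined with Step 1 this gives (i), and (ii) and (iii) hold by construction.

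\textbf{Step 3: count the children.} The number of iterations in parent $i$ is at most $|X_i|\big/\big(\delta^2|X_i|^2/(16C_i)\big)=16C_i/(\delta^2|X_i|)$. Using the heavy condition $C_i^{1/2}\leq 2\Lambda|X_i|/(\delta|X|)$, this is at most
\begin{align*}
\frac{16C_i^{1/2}}{\delta^2|X_i|}\cdot\frac{2\Lambda|X_i|}{\delta|X|}=\frac{32\Lambda\,C_i^{1/2}}{\delta^3|X|}.
\end{align*}
Summing over parents gives $J\ll\Lambda^2/(\delta^3|X|)$, one power of $\delta$ worse than claimed. This $\delta$-bookkeeping is the main obstacle. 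The fix is to avoid the $\delta^2$ loss from using $|Z|\geq\delta|X_i|/2$: stop the peeling in a parent when $|Z|$ falls below a fixed fraction of $|X_i|$, and run a dyadic argument on $|Z|$. On the range $|Z|\in[2^{-k-1}|X_i|,2^{-k}|X_i|]$, each step removes $\gtrsim 4^{-k}|X_i|^2/C_i$, so this range takes $\lesssim 2^{k}C_i/|X_i|$ steps. Summed up to $2^{-k}\approx\delta$, this is $\lesssim C_i/(\delta|X_i|)$ steps per parent. With the heavy bound it becomes $\Lambda C_i^{1/2}/(\delta^2|X|)$, and summing gives $J\ll\Lambda^2/(\delta^2|X|)$ as required in (iv).
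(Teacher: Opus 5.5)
Your argument follows the paper's proof. It uses the same parent threshold $\mathrm{Cap}_2(X_i)^{1/2}/|X_i|\le 2\Lambda/(\delta|X|)$, the same greedy peeling via Lemma~\ref{lem:richFibers} with $\mathrm{Cap}_2(Z)\le C_i$, and a dyadic count of $O(C_i/(\delta|X_i|))$ steps per parent, which is equivalent to the paper's recursion $1/m_{j+1}\ge 1/m_j+1/(4C_i)$. You are right that the naive count in Step 3 loses a factor of $\delta$, and your dyadic version fixes it correctly.

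\textbf{The gap is in the bound on $|\mathcal J'|$ at the end of Step 1.} From $C_i^{1/2}\ge 1$ you only get $|\mathcal J'|\le\sum_{\mathcal J'}C_i^{1/2}\le\Lambda$. The further inequality $\sum_{\mathcal J'}C_i^{1/2}\le 2\Lambda^2/(\delta|X|)$ is false in general. For the one-part partition $X_1=X$ (with $P_1=\mathbb{R}^d$) it says $\mathrm{Cap}_2(X)\ge\delta^2|X|^2/4$. But for $X=\mathcal F_{d,R^2}$ with $d\ge 5$, Cauchy--Schwarz on $\mu*\mu$ gives $\mathrm{Cap}_2(X)\le|X+X|\ll_d R^d$, while $|X|^2\asymp_d R^{2d-4}$.

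\textbf{Fix.} Use $C_i\ge|X_i|$ instead of $C_i\ge 1$. For every $i\in\mathcal J'$,
\[
1\le\frac{C_i}{|X_i|}=C_i^{1/2}\cdot\frac{C_i^{1/2}}{|X_i|}\le\frac{2\Lambda}{\delta|X|}\,C_i^{1/2},
\]
and summing over $\mathcal J'$ gives $|\mathcal J'|\le 2\Lambda^2/(\delta|X|)$. This is exactly the paper's argument, and with this correction your proof is complete.
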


\begin{proof}
    Let $B_I:=\sum_{i\in \mathcal{J}}\mathrm{Cap}_2(X_i)^{1/2}$. We begin by discarding every index $i\in I$ such that both $\dim P_i\geq 2$ and $\frac{\mathrm{Cap}_2(X_i)^{1/2}}{|X_i|}>\frac{2B_I}{\delta|X|}$; call this bad set $\mathcal{J}_{bad}$: \begin{align*}
        \mathcal{J}_{bad}:=\Big\{i\in \mathcal{J}:\frac{\mathrm{Cap}_2(X_i)^{1/2}}{|X_i|}>\frac{2B_I}{\delta |X|}\Big\}.
    \end{align*} Then, \begin{align}\label{eq:capacityProp1}
        \sum_{i\in \mathcal{J}_{bad}}|X_i|<\frac{\delta |X|}{2B_I}\sum_{i\in \mathcal{J}_{bad}}\mathrm{Cap}_2(X_i)^{1/2}\leq \frac{\delta |X|}{2}.
    \end{align} Let $\mathcal{J}':=\mathcal{J}\setminus \mathcal{J}_{bad}$. We may then compute  \begin{align*}
        \sum_{i\in \mathcal{J}'}\frac{\mathrm{Cap}_2(X_i)}{|X_i|}=\sum_{i\in \mathcal{J}'}\mathrm{Cap}_2(X_i)^{1/2}\frac{\mathrm{Cap}_2(X_i)^{1/2}}{|X_i|}\leq \frac{2B_I^2}{\delta |X|}.
    \end{align*} Since $\mathrm{Cap}_2(X_i)\geq |X_i|$, we then deduce that $|\mathcal{J}'|\leq \frac{2B_I^2}{\delta |X|}$.

    Now, fix some $X_i$ with $i\in \mathcal{J}'$. If $P_i\cap \mathcal{F}_{d,R^2}$ is a singleton, then one proper hyperplane can pass through that point, which suffices. Otherwise, we apply Lemma \ref{lem:richFibers} in a greedy manner to $X_i\setminus (P_i\cap H_z\cap \mathcal{F}_{d,R^2})$ for the appropriate $z$. Let $T:=4\mathrm{Cap}_2(X_i)$. If $m_j$ is the size of the set remaining after $j$ hyperplanes have been found, then \begin{align*}
        m_{j+1}\leq m_j-\frac{m_j^2}{T}.
    \end{align*} Here, $m_j\leq |X_i|\leq T$. If $m_{j+1}>0$, then we have that \begin{align*}
        \frac{1}{m_{j+1}}\geq \frac{1}{m_j}+\frac{1}{T}.
    \end{align*} Iterating this inequality from $m_0=|X_i|$ gives $\frac{1}{m_j}\geq \frac{1}{|X_i|}+\frac{j}{T}$, and so \begin{align*}
        m_j\leq \frac{T}{j+T/|X_i|}.
    \end{align*} Once $j\geq 2T/(\delta |X_i|)$, we see that $m_j\leq \frac{\delta |X_i|}{2}$, and so all but $\frac{\delta |X_i|}{2}$ points of $X_i$ are covered by $O(\frac{T}{\delta |X_i|})$ proper affine subspaces of $P_i$. Assign each covered point to the any one fixed subspace containing it, so that the children $Y_1,\cdots, Y_J$ are disjoint. The number of new children is at most \begin{align*}
        \sum_{i\in \mathcal{J}'}O\Big(\frac{T}{\delta |X_i|}\Big)\ll \frac{1}{\delta}\sum_{i\in \mathcal{J}'}\mathrm{Cap}_2(X_i)^{1/2}\cdot \frac{2 B_I}{\delta |X|}\ll \frac{B_I^2}{\delta^2|X|}.
    \end{align*} Setting $X'=\bigsqcup_{i\not\in \mathcal{J}}X_i\sqcup \bigsqcup_{j=1}^JY_j$, we may then compute from (\ref{eq:capacityProp1}) and the points left over by the coverings that 
    \[
        |X\setminus X'|\leq \frac{\delta |X|}{2}+\sum_{i}\frac{\delta |X_i|}{2}\leq \delta |X|.\qedhere
    \]
\end{proof}

\subsection{Proof of Theorem \ref{thm:iteratedDimensionReduction}}

We now turn to the proof of Theorem \ref{thm:iteratedDimensionReduction}. This will not require any decoupling other than the trivial example. Rather, one may view this as an iterated argument, where one considers hereditary energy of a spherical subset and shows that an appropriate lower-dimensional set covers most of this subset; a greedy argument then crucially uses this hereditary energy property to show that all but a prescribed fraction of this subset can be covered in a bounded number of lower-dimensional affine subspaces. One then iterates this to produce the final bound.

First, note that every subset $X\subseteq S$ satisfies \begin{align}\label{eq:capacityControl}
    \mathrm{Cap}_2(X)\leq \mathrm{Cap}_2(S)\leq |A|\|A\|^2.
\end{align}

\begin{proposition}[Dimension descent]\label{prop:dimDescent}
    Let $|S|=\alpha |A|$ and fix $\eta\in (0,1/2)$. Let \begin{align*}
         Q:=C_d\eta^{-2}\frac{\|A\|^2}{\alpha},
    \end{align*} where $C_d$ is sufficiently large. For every $k\in \{0,1,\cdots, d-1\}$ there is a set $X_k:=\bigsqcup_{i=1}^{T_k}X_{k,i}\subseteq S$ satisfying the following: \begin{enumerate}
        \item[(i)] Each $X_{k,i}$ lies in an affine subspace of dimension at most $d-k$;
        \item[(ii)] One has that \begin{align*}
            |X_k|=\sum_{i=1}^{T_k}|X_{k,i}|\geq \Big(1-\frac{k\eta}{d-1}\Big)|S|;
        \end{align*}
        \item[(iii)] $T_k\leq Q^{2^k-1}$.
    \end{enumerate}
\end{proposition}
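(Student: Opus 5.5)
We argue by induction on $k$, taking Proposition \ref{prop:capacity controls concentration} as the inductive step and using \eqref{eq:capacityControl} for the capacity bounds.

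\textbf{Base case.} For $k=0$ take $X_0=S$ with $T_0=1$. It lies in $\mathbb{R}^d$, an affine subspace of dimension $d$, so (i)--(iii) hold trivially.

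\textbf{Inductive step.} Suppose $X_k=\bigsqcup_{i=1}^{T_k}X_{k,i}$ has been built for some $k\le d-2$, with $X_{k,i}\subseteq P_{k,i}$ and $\dim P_{k,i}\le d-k$. We may drop empty pieces. Apply Proposition \ref{prop:capacity controls concentration} to this partition with $\delta:=\eta/(d-1)$.

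Let $\mathcal{J}$ be the set of indices with $\dim P_{k,i}\ge 2$. The pieces outside $\mathcal{J}$ already lie in subspaces of dimension at most $1\le d-k-1$, so they are kept unchanged. Each child $Y_j$ lies in a proper affine hyperplane of its parent's subspace, hence in a subspace of dimension at most $d-k-1$. Define $X_{k+1}:=X'$, partitioned into the untouched pieces and the children. This gives (i).

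For (ii), the loss satisfies
\begin{align*}
|X_k\setminus X_{k+1}|\le \delta|X_k|\le \tfrac{\eta}{d-1}|S|,
\end{align*}
and adding this to the inductive bound on $|X_k|$ gives (ii) at level $k+1$.

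\textbf{Counting pieces (main obstacle).} The key point for (iii) is bounding $B:=\sum_{i\in\mathcal{J}}\mathrm{Cap}_2(X_{k,i})^{1/2}$. By Cauchy--Schwarz and \eqref{eq:capacityControl},
\begin{align*}
B\le T_k^{1/2}\,(|A|\|A\|^2)^{1/2}.
\end{align*}
Using $|X_k|\ge |S|/2$ (valid since $\eta<1/2$) and $|S|=\alpha|A|$, we get
\begin{align*}
\frac{B^2}{\delta^2|X_k|}\ll \frac{(d-1)^2}{\eta^2}\cdot\frac{T_k|A|\|A\|^2}{\alpha|A|}= C\,T_k\,\eta^{-2}\frac{\|A\|^2}{\alpha}.
\end{align*}

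This bound is only linear in $T_k$, which would yield $T_k\le Q^k$. That is stronger than the stated (iii), so either it is fine or the Cauchy--Schwarz step is lossier than expected. The form $Q^{2^k-1}$ corresponds to the recursion $T_{k+1}\le T_k+Q\,T_k^2$. This is exactly what one gets if each piece's capacity is instead bounded by $\mathrm{Cap}_2(X_{k,i})\le |A|\|A\|^2$ and summed crudely, giving $B^2\le T_k^2|A|\|A\|^2$. Either way the bound fits inside $Q^{2^{k+1}-1}$ once $C_d$ is large, since
\begin{align*}
T_k+QT_k^2\le 2QT_k^2\le 2Q\cdot Q^{2^{k+1}-2}.
\end{align*}
The leftover factor $2$ is absorbed because $Q\ge C_d$ is large, after enlarging $C_d$ slightly to swallow the extra factor $2$ and the implied constants.

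\textbf{What to check carefully.} The untouched pieces add at most $T_k$ to the count, and this is already included in the recursion above. Children need not be disjoint from untouched pieces; disjointness holds automatically because children lie inside parents with indices in $\mathcal{J}$. Iterating up to $k=d-1$ gives dimension at most $1$, $(1-\eta)$ mass, and $T_{d-1}\le Q^{2^{d-1}-1}=Q^{t_d}$.
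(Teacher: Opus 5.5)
Once you use the crude bound $B\le T_k(|A|\|A\|^2)^{1/2}$, your argument is correct and is the paper's proof. It has the same induction, the same application of Proposition \ref{prop:capacity controls concentration} at each level, and the same recursion $T_{k+1}\le T_k+J_k\le QT_k^2$. Your fixed $\delta=\eta/(d-1)$, combined with $|X_k|\ge |S|/2$, works just as well as the paper's $\delta_k=\eta|S|/((d-1)|X_k|)$.

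You should delete the Cauchy--Schwarz step and the suggestion that $T_k\le Q^k$, because that step is invalid, not merely lossy. Cauchy--Schwarz gives $B\le T_k^{1/2}\bigl(\sum_{i\in\mathcal{J}}\mathrm{Cap}_2(X_{k,i})\bigr)^{1/2}$. However, \eqref{eq:capacityControl} bounds each $\mathrm{Cap}_2(X_{k,i})$ individually, not their sum. So you only get $\sum_i \mathrm{Cap}_2(X_{k,i})\le T_k|A|\|A\|^2$, which is the crude bound again.

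Capacities of disjoint pieces are not controlled by the capacity of the union unless the partition decouples. Theorem \ref{thm:ACsubAdditive} quantifies exactly this. For an arbitrary $T_k$-piece partition, the trivial decoupling constant $K=T_k^{1/2}$ returns precisely $B\le T_k\,\mathrm{Cap}_2(X_k)^{1/2}$.

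Here is a concrete failure of the summed bound. Take an odd prime $p$ and $A=\{0,\dots,p-1\}^2\subset\mathbb{Z}^2$. Then $A$ is the disjoint union of the $p$ Sidon sets $\{(x,(x^2+c)\bmod p):0\le x<p\}$. Each has capacity at least $p^2/2$, so the capacities sum to at least $p^3/2$. Meanwhile $|A|\|A\|^2\asymp p^2\log^4 p$.
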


\begin{proof}
Fix $\eta\in (0,1/2)$. At level $k=0$ take the single subset $S\subset \mathbb{R}^d$; the statement is obviously true in such a case. We now assume that level $k<d-1$ has been constructed in the desired manner, and move to constructing level $k+1$ in a nested manner. By induction hypothesis, we have $T_k$ disjoint subsets $X_k=\bigsqcup_{i=1}^{T_k}X_{k,i}$ with total mass 
\begin{align}\label{eq:inductionMass}
|X_k|\geq \left(1-\frac{k\eta}{d-1}\right)|S|
\end{align} with each part contained in an affine subspace of dimension at most $d-k$. 

Apply Proposition \ref{prop:capacity controls concentration} with parameter $\delta=\delta_k=\frac{\eta |S|}{(d-1)|X_k|}$. By (\ref{eq:inductionMass}), and since $k<d-1$ and $\eta<1/2$, one observes that $\delta<1$. This gives us a refinement of the current level-$k$ partition into lower-dimensional parts. Let $\mathcal{J}_k$, $\mathcal{J}'_k$, $X'_k$, $Y_{k,1},\cdots, Y_{k,J_k}$, $J_k$ be the quantities one obtains with this proposition, and set $X_{k+1}=X_k'$. In view of (\ref{eq:capacityControl}), one may compute \begin{align}\label{eq:capacityControl2} \sum_{i\in \mathcal{J}_k}\mathrm{Cap}_2(X_{k,i})^{1/2}\leq |A|^{1/2}\|A\|\cdot |\mathcal{J}_k|\leq |A|^{1/2}\|A\|T_k.
\end{align} Our new partition consists of the one-dimensional subsets $\{X_{k,i}:i\not\in \mathcal{J}_k\}$, alongside the children $Y_{k,1},\cdots, Y_{k,J_k}$ of the parent subsets $\{X_{k,i}:i\in \mathcal{J}_k'\}$, each of which lies in an affine subspace of at least one dimension smaller. The number of parts at the next level satisfies \begin{align*}
    T_{k+1}&\leq T_k+J_k\\ &\leq T_k+c\delta^{-2}|X_k|^{-1}\Big(\sum_{i\in \mathcal{J}_k}\mathrm{Cap}_2(X_{k,i})^{1/2}\Big)^2 \\ &\leq T_k+c\delta^{-2}|X_k|^{-1}|A|\|A\|^2T_k^2 \\ &= T_k+c \Big(\frac{\eta |S|}{(d-1)|X_k|}\Big)^{-2}|X_k|^{-1}|A|\|A\|^2 T_k^2 \\ &= T_k+c(d-1)^2 \eta^{-2}\frac{|X_k|}{|S|}\frac{|A|}{|S|}\|A\|^2T_k^2 \\ &\leq C_d \eta^{-2}\frac{\|A\|^2}{\alpha}T_k^2
\end{align*} for a constant $C_d$ depending only on $d$ (here, we used that $|X_k|/|S|\leq 1$, $|A|/|S|=\alpha^{-1}$, and $T_k\geq 1$). This provides that \begin{align}\label{eq:recursion}T_{k+1}\leq QT_k^2,\quad Q:=C_d\eta^{-2}\frac{\|A\|^2}{\alpha}.\end{align}

We may also compute \begin{align*}
    |X_{k+1}|\geq (1-\delta)|X_k|=\Big(1-\frac{\eta |S|}{(d-1) |X_k|}\Big)|X_k|=|X_k|-\frac{\eta |S|}{d-1}
\end{align*} and so by induction $|X_{k}|\geq |S|-\frac{k\eta |S|}{d-1}=\Big(1-\frac{k\eta }{d-1}\Big)|S|$. Starting with $T_0=1$, the recursion (\ref{eq:recursion}) gives that $T_k\leq Q^{2^k-1}$. This provides the desired result.
\end{proof}

\begin{proof}[Proof of Theorem \ref{thm:iteratedDimensionReduction}] Take $k=d-1$ in Proposition \ref{prop:dimDescent}. This gives a set $X_{d-1}=\bigsqcup_{i=1}^{T_k} X_{d-1,i}\subseteq S$. Each $X_{d-1,i}$ lies in an affine subspace of dimension at most $1$, $|X_{d-1}|\geq (1-\eta)|S|$, and $T_k\leq \Big(C_d \eta^{-2} \frac{\|A\|^2}{\alpha}\Big)^{2^{d-1}-1}$. An affine subspace of dimension at most 1 is a line or a point, which can intersect $\mathcal{F}_{d,R^2}$ at most twice. Thus, \begin{align*}
    (1-\eta)|S|\leq |X_{d-1}|=\sum_{i=1}^{T_k}|X_{d-1,i}|\leq 2T_k\leq 2\Big(C_d\eta^{-2}\frac{\|A\|^2}{\alpha}\Big)^{2^{d-1}-1}.
\end{align*} Choosing $\eta=1/4$, say, and using that $|S|=|D|=\alpha|A|$ provides the result upon relabeling $C_d$.
\end{proof}

\subsection{Proof of Theorem \ref{thm:mainThm2}} We now prove Theorem \ref{thm:mainThm2}, where we use the machinery from previous sections. The key input is the following decoupling theorem from Bourgain-Demeter \cite{BourgainDemeter2015}. Recall the notation \begin{align*}
     \mathcal{F}_{d,R^2}=\{x\in \mathbb{Z}^d:|x|^2=R^2\}.
\end{align*} We recall the classical estimate that \begin{align*}
    |\mathcal{F}_{d,R^2}|\asymp_d R^{d-2},\quad d\geq 5.
\end{align*}

By a standard $\sigma$-cap partition of $\mathcal{F}_{d,R^2}$, we mean the partition induced by the usual tangential $\sigma$-scale decomposition of the unit sphere appearing in decoupling theory, such that \begin{align*}
    \mathcal{F}_{d,R^2}=\bigsqcup_{\theta\in \Theta}V_\theta,\quad |\Theta|\asymp \sigma^{-(d-1)}
\end{align*} and every $V_\theta$ is contained in a spherical cap of angular radius $O_d(\sigma)$. The following is a result that we will use as a black box.

\begin{theorem}[Bourgain-Demeter, \cite{BourgainDemeter2015}]\label{thm:blackBox} Let $d\geq 5$, $R^{-1}\leq \sigma\leq c_d$, and let $\{V_\theta\}_{\theta\in\Theta}$ be a standard $\sigma$-cap partition of $\mathcal{F}_{d,R^2}$. If $S\subseteq \mathcal{F}_{d,R^2}$, set $S_\theta:=S\cap V_\theta$. Then, for every $\epsilon>0$ and $a:S\rightarrow \mathbb{C}$, \begin{align*}
    \Big\|\sum_{x\in S}a(x)e(x\cdot \xi)\Big\|_{4(\xi)}\ll_{d,\epsilon} \sigma^{-\frac{d-3}{4}-\epsilon}\Big(\sum_{\theta\in \Theta}\Big\|\sum_{x\in S_\theta}a(x)e(x\cdot \xi)\Big\|_{4(\xi)}^2\Big)^{1/2}.
\end{align*} One also has the global extension estimate \begin{align}\label{eq:globalExtension}
    \Big\|\sum_{x\in S}a(x)e(x\cdot \xi)\Big\|_{4(\xi)}\ll_{\epsilon}R^{\frac{d-4}{4}+\epsilon}\|a\|_{\ell^2(S)}.
\end{align}
\end{theorem}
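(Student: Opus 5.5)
Since the statement is quoted from \cite{BourgainDemeter2015} and only used as a black box, I would recover it by rescaling the $\ell^2$ decoupling theorem for the unit sphere in $\mathbb{R}^d$ to the lattice setting. For $\xi\in\mathbb{T}^d$, put $F(\xi)=\sum_{x\in S}a(x)e(x\cdot\xi)$ and $g(y):=F(y/R)$. Then $g$ is a function on $\mathbb{R}^d$, periodic with period $R$ in each coordinate, whose frequencies $x/R$ lie exactly on $\mathbb{S}^{d-1}$. By periodicity, $\|F\|_{L^4(\mathbb{T}^d)}^4$ equals the normalized average of $|g|^4$ over $[0,R]^d$. That average in turn equals the average over a much larger cube $[0,MR]^d$ with $M\geq \sigma^{-2}$. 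I would tile this cube by balls $B$ of radius $\sigma^{-2}$ and insert a smooth weight $w_B$ on each ball.

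On each ball $B$, the uncertainty principle lets us regard the frequencies of $g$ as lying in the $\sigma^2$-neighbourhood of $\mathbb{S}^{d-1}$. The standard $\sigma$-caps $V_\theta/R$ are then exactly the $\sigma\times\cdots\times\sigma\times\sigma^2$ plates of decoupling theory. I would apply Bourgain--Demeter decoupling for a hypersurface with nonvanishing Gaussian curvature, of dimension $n=d-1$, at $\delta=\sigma^2$. For $d\ge5$ the exponent $p=4$ lies above the critical value $2(n+2)/n$. In that range the $\ell^2$ decoupling constant is $\delta^{-(n/4-(n+2)/(2p))-\epsilon}$, and at $p=4$ this equals $\sigma^{-(n-2)/4-\epsilon}=\sigma^{-(d-3)/4-\epsilon}$. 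Summing the weighted local inequalities over $B$, using Minkowski in $\ell^{2}$ to move the $\theta$-sum outside, and undoing the periodic averaging gives the first estimate. The only care needed is to check that the weights $w_B$ sum to something comparable to $1$ and that the cap functions stay periodic, so the averages return to $L^4(\mathbb{T}^d)$ norms.

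For the global extension estimate (\ref{eq:globalExtension}), I would apply the first estimate at an intermediate scale and then bound each cap by hand. I would take $\sigma=R^{-1/2}$, so that each cap has physical size about $R^{1/2}$ and sits in a slab of thickness $O(1)$ around a tangent hyperplane. The lattice points of such a cap therefore lie on $O(1)$ parallel hyperplane sections, and each section is a lower-dimensional lattice sphere. I would then bound the cap $L^4$ norm using additive energy or lattice-point counts on those sections, rather than the crude bound $\|F_\theta\|_4\le |S_\theta|^{1/4}\|a\|_{\ell^2(S_\theta)}$. The crude bound only yields $R^{(d-3)/4+\epsilon}$, since $|S_\theta|\approx R^{(d-3)/2}$.

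\textbf{Main obstacle.} Gaining the final factor $R^{1/4}$ is the hardest step. My first attempt would be an induction on dimension over the $O(1)$ hyperplane sections, combined with the classical count $|\mathcal{F}_{d',r^2}|\asymp r^{d'-2}$ for $d'\ge5$. The low-dimensional sections would need separate divisor-type bounds, following Bourgain--Demeter's treatment of lattice points on spheres.
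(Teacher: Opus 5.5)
Your transference argument for the first inequality is sound and follows the same route as the paper's Appendix A. Both rescale to $\mathbb{S}^{d-1}$, apply Bourgain--Demeter at $p=4$ with $\delta=\sigma^2$, and return to $\mathbb{T}^d$ by periodicity. The only difference is technical: where you use local decoupling on $\sigma^{-2}$-balls over a large period cube, the paper uses one global cutoff $\widecheck{\rho}(\eta x)$ with $\eta\to0$ and a Riemann-sum limit. Your exponent computation agrees with the paper's. Decoupling at $p=4$ only needs $d\ge3$; the hypothesis $d\ge5$ is there for the second estimate.

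The gap is the global bound (\ref{eq:globalExtension}). The paper does not derive it from cap decoupling at all; it quotes it verbatim from Bourgain--Demeter. Its exponent $\frac{d-4}{4}$ is sharp (take $a\equiv1$), and your sketch does not reach it, for two reasons.

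First, the inference ``the cap lies in an $O(1)$-thick slab, hence its lattice points lie on $O(1)$ parallel hyperplane sections'' is false. Lattice hyperplanes orthogonal to a primitive $v\in\mathbb{Z}^d$ are $1/|v|$ apart. Over a cap of width $R^{1/2}$ with direction $\omega_\theta$, the quantity $x\cdot v$ ranges over an interval of length about $|v|\,\bigl(R^{1/2}\bigl|v/|v|-\omega_\theta\bigr|+1\bigr)$. This is $O(1)$ only if $|v|=O(1)$ and $\omega_\theta$ is within $O(R^{-1/2})$ of $v/|v|$, which holds for $O(1)$ of the $\asymp R^{(d-1)/2}$ caps. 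Even then, the sections are level sets of the restricted quadratic form on translates of rank-$(d-1)$ sublattices of varying covolume. They are not copies of $\mathcal{F}_{d-1,r^2}$, so the asymptotic $|\mathcal{F}_{d',r^2}|\asymp r^{d'-2}$ is not available for them.

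Second, and more fundamentally, the decoupling constant at $\sigma=R^{-1/2}$ is $R^{\frac{d-3}{8}+\epsilon}$. You would therefore need
$\|\sum_{x\in S_\theta}a(x)e(x\cdot\xi)\|_{4}\ll R^{\frac{d-5}{8}+\epsilon}\|a\|_{\ell^2(S_\theta)}$ for \emph{every} cap. That is an $R^{1/4}$ gain over the trivial factor $|S_\theta|^{1/4}$, and amounts to a sharp restriction estimate for the cap itself. For instance, for $d=5$ and $a=\mathbf{1}_{S_\theta}$ it says $E_2(S_\theta)\ll R^{\epsilon}|S_\theta|^2$. ``Induction on dimension with divisor-type bounds'' does not supply this.

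So the second half of the theorem is unproved in your proposal. You must either cite it, as the paper does, or reproduce Bourgain--Demeter's separate argument for it.
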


\begin{proof}
    The first assertion is the periodic discrete form of the Bourgain-Demeter $\ell^2$-decoupling theorem at $p=4$, applied at scale $\delta=\sigma^2$; see Appendix A for the standard transference. The global estimate is taken verbatim from their paper.
 \end{proof}

We remark that the estimate (\ref{eq:globalExtension}) can quickly give Theorem \ref{thm:global}.

\begin{proof}[Proof of Theorem \ref{thm:global}]
   Let $\mu=|S|^{-1}\mathbf{1}_S$. Then, $\mu$ is a probability measure on $S$, with \begin{align*}
       \mathcal{E}_2(\mu)^{1/4}=\Big\|\sum_{x\in S}\mu(x)e(x\cdot \xi)\Big\|_{4(\xi)}\ll_\epsilon R^{\frac{d-4}{4}+\epsilon}|S|^{-1/2}
   \end{align*} and so \begin{align*}
       \mathrm{Cap}_2(S)\gg_{d,\epsilon} R^{-(d-4)-\epsilon}|S|^2,
   \end{align*} after relabeling $\epsilon$. Consequently, by Lemma \ref{lem:capacityTransfer}, one has that \begin{align*}
       R^{-(d-4)-\epsilon}|S|^2\ll_{d,\epsilon}\|A\|^{2}|A|.
   \end{align*} Finally, $|S|=\alpha|A|=\beta |\mathcal{F}_{d,R^2}|\asymp_d \beta R^{d-2}$, so $R^{d-2}\asymp_d \alpha |A|/\beta$. Substituting this into the above inequality gives then that 
   \[
       \|A\|\gg_{d,\epsilon} |A|^{\frac{1}{d-2}-]\epsilon}\alpha^{\frac{d}{2(d-2)}+\epsilon}\beta^{\frac{d-4}{2(d-2)}+\epsilon}.\qedhere
   \]
\end{proof}

We next require the following result.

\begin{theorem}\label{thm:CapacityDecoupling}
   Suppose $A\subset \mathbb{Z}^m$ has a subset $D\subseteq A$ Freiman 2-isomorphic to a finite set $S\subseteq \mathcal{F}_{d,R^2}$, where $d\geq 5$. Partition $\mathcal{F}_{d,R^2}$ into standard $\sigma$-caps $\bigsqcup_{\theta\in \Theta}V_\theta$, where $R^{-1}\leq \sigma\leq c_d$. Then, if $S_\theta:=S\cap V_\theta$, we have\begin{align*}
        \sum_{\theta\in \Theta}\mathrm{Cap}_2(S_\theta)^{1/2}\ll_{d,\epsilon} \sigma^{-(d-3)/2-\epsilon}|A|^{1/2}\|A\|.
    \end{align*} Consequently, \begin{align*}
        \sum_{\theta\in \Theta}|S_\theta|^{1/2}\ll_{d,\epsilon} \sigma^{-(d-3)/2-\epsilon}|A|^{1/2}\|A\|.
    \end{align*}
\end{theorem}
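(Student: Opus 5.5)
The plan is to combine the Bourgain--Demeter decoupling estimate (Theorem \ref{thm:blackBox}) with the abstract capacity bound of Theorem \ref{thm:decouplingFourierNorm} in the case $h=2$.

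First, fix $\epsilon>0$ and $\sigma\in[R^{-1},c_d]$. Theorem \ref{thm:blackBox} shows that the induced partition $S=\bigsqcup_\theta S_\theta$ satisfies the partition-decoupling inequality (\ref{eq:partitionDecoupling}) with $h=2$ and constant
\begin{align*}
K=C_{d,\epsilon}\,\sigma^{-\frac{d-3}{4}-\epsilon}.
\end{align*}
The inequality there is stated for every coefficient function $a:S\rightarrow\mathbb{C}$. It therefore applies verbatim to our $S$. The empty caps contribute zero to the right-hand side, so they cause no trouble. We also have $\mathrm{Cap}_2(\emptyset)=0$ by convention, so empty caps contribute nothing to the left-hand sum either.

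Next, apply Theorem \ref{thm:decouplingFourierNorm} with $h=2$. A Freiman 2-isomorphism is in particular a bijective Freiman 2-homomorphism, so the hypotheses hold. This gives
\begin{align*}
\sum_\theta \mathrm{Cap}_2(S_\theta)^{1/2}\leq K^2|A|^{1/2}\|A\| \ll_{d,\epsilon}\sigma^{-\frac{d-3}{2}-2\epsilon}|A|^{1/2}\|A\|.
\end{align*}
Replacing $\epsilon$ by $\epsilon/2$ yields the first claim.

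For the consequence, I use the elementary bound $\mathrm{Cap}_2(X)\geq |X|$ for nonempty $X$, already used in Corollary \ref{cor:concentration}. To see it, take $\mu$ uniform on $X$. Then $\mathcal{E}_2(\mu)=E_2(X)/|X|^4$, and $E_2(X)\leq|X|^3$, so $\mathcal{E}_2(\mu)\leq 1/|X|$. Summing $|S_\theta|^{1/2}\leq \mathrm{Cap}_2(S_\theta)^{1/2}$ over $\theta$ then gives the second display.

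The only genuine subtlety is in the first step. It lies in the validity of the discrete periodic form of decoupling for the lattice sphere at cap scale $\sigma$, with the exponent $\sigma^{-(d-3)/4}$. This corresponds to decoupling at $p=4$ for the paraboloid/sphere in $\mathbb{R}^d$, where the hypersurface has dimension $d-1$; it is below the critical exponent, so the constant is $\delta^{-(d-3)/8}=\sigma^{-(d-3)/4}$. That is taken as a black box via Theorem \ref{thm:blackBox} and Appendix A, so everything else is bookkeeping of constants.
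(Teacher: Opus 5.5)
Your proposal is correct and matches the paper's proof. The paper likewise takes the decoupling constant $K\ll_{d,\epsilon}\sigma^{-(d-3)/4-\epsilon}$ from Theorem \ref{thm:blackBox}, applies Theorem \ref{thm:decouplingFourierNorm} with $h=2$, and relabels $\epsilon$; the second display follows from $\mathrm{Cap}_2(X)\geq |X|$ exactly as you verify. One slip in your closing remark, which does not affect the argument: $p=4$ lies strictly \emph{above} the critical exponent $2(d+1)/(d-1)$ when $d\geq 4$, not below it, and that is precisely why the constant carries the power loss $\delta^{-(d-3)/8}$ rather than only $\delta^{-\epsilon}$.
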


\begin{proof}
    The periodic cap-decoupling result in Theorem \ref{thm:blackBox} gives that the induced partition has constant \begin{align*}
        K\ll_{d,\epsilon} \sigma^{-(d-3)/4-\epsilon}.
    \end{align*} Applying Theorem \ref{thm:decouplingFourierNorm} with $h=2$ and relabeling $\epsilon$ proves the result.
\end{proof}

We may now prove Theorem \ref{thm:mainThm2}.

\begin{proof}[Proof of Theorem \ref{thm:mainThm2}] From Theorem \ref{thm:CapacityDecoupling} one has that \begin{align}\label{eq:capacityBoundDecoupling}
    \sum_{\theta\in \Theta}\mathrm{Cap}_2(S_\theta)^{1/2}\ll_{d,\epsilon} \sigma^{-(d-3)/2-\epsilon}|A|^{1/2}\|A\|.
\end{align} We now apply Proposition \ref{prop:capacity controls concentration} with $\delta = \frac{1}{100}$, $X=S$, and partition $S=\bigsqcup_{\theta\in \Theta} S_\theta$, where we drop any empty caps from the partition; set $P_\theta:=\mathbb{R}^d$. Notice in this case that $\mathcal{J}=\Theta$, since $\dim P_\theta\geq 2$ for all $\theta$. The proposition gives then a set $X'\subseteq S$ of the form $X'=\bigsqcup_{j=1}^J Y_j$, where $|S\setminus X'|\leq \frac{|S|}{100}$, $J\ll |S|^{-1}\left(\sum_{\theta\in \Theta}\mathrm{Cap}_2(S_\theta)^{1/2}\right)^2$, and for each $j\in [J]$, there is an index $\theta(j)\in \mathcal{J}'$, with $\mathcal{J}'\subset \Theta$ having size \begin{align*}
    |\mathcal{J}'|\ll |S|^{-1}\Big(\sum_{\theta\in \Theta}\mathrm{Cap}_2(S_\theta)^{1/2}\Big)^2,
\end{align*} and a proper affine hyperplane $Q_j\subsetneq \mathbb{R}^d$ such that \begin{align*}
    Y_j\subseteq S_{\theta(j)}\cap Q_j.
\end{align*} In particular, then, \begin{align*}
    X'\subseteq \bigcup_{j=1}^J S_{\theta(j)}
\end{align*} and so, because these are a subset of the caps $\{V_{\theta}:\theta\in \mathcal{J}'\}$, one only needs $|\mathcal{J}'|$ many caps to cover $99$ percent of $S$. Applying (\ref{eq:capacityBoundDecoupling}) then gives that $99$ percent of $S$ is covered by at most \begin{align}\label{eq:capCoveringBulk}O_{d,\epsilon}\Big(|S|^{-1} \sigma^{-(d-3)-\epsilon}|A|\|A\|^2\Big)=O_{d,\epsilon}\Big(\sigma^{-(d-3)-\epsilon}\alpha^{-1}\|A\|^2\Big) \end{align} many caps.\\

It remains to compute $p_S$, the percentage of caps that are used to cover the bulk of $S$. At $\sigma = |S|^{-1/(d-1)}$, a standard cap partition of $\mathcal{F}_{d,R^2}$ into $\sigma$-caps gives $\asymp_d |S|$ caps. This scale is admissible for the decoupling inequality: since $|S|\leq |\mathcal{F}_{d,R^2}|\ll_d R^{d-2}$, \begin{align*}
    |S|^{-1/(d-1)}\gg_d R^{-(d-2)/(d-1)}\geq R^{-1}
\end{align*} and $\sigma\leq c_d$ holds for $|S|$ sufficiently large. Thus, by dividing (\ref{eq:capCoveringBulk}) by $|S|$, one obtains that \begin{align*}
    p_S&\leq \min\Big\{1,O_{d,\epsilon}\Big(|S|^{-1+\frac{d-3}{d-1}+\epsilon}\alpha^{-1}\|A\|^2\Big)\Big\} \\ &=\min\Big\{1,O_{d,\epsilon}\Big(|A|^{-\frac{2}{d-1}+\epsilon}\alpha^{-\frac{d+1}{d-1}+\epsilon}\|A\|^2\Big)\Big\}.
\end{align*}
Note that from our application of Lemma~\ref{lem:richFibers}, the subspaces $Q_j = H_{z_j}$ for some $z_j\neq 0$. This proves the final statement about directionally localized spherical sections.
\end{proof}

\subsection{Proof of Theorem \ref{thm:mainThm3}} We now show Theorem \ref{thm:mainThm3}.

\begin{proof}[Proof of Theorem \ref{thm:mainThm3}]
Let $K$ denote the decoupling constant of the standard $\sigma$-cap partition; by Theorem \ref{thm:blackBox} one may take \begin{align*}
    K^4\ll_{d,\epsilon} \sigma^{-(d-3)-\epsilon}.
\end{align*} We then apply Theorem \ref{thm:herEnergy} with $h=2$ to deduce that for any $\rho\in (0,1)$, there is a set of indices $\Theta_{good}\subseteq \Theta$ such that $|S_\theta|>0$ for $\theta\in \Theta_{good}$, \begin{align*}
    |\Theta_{good}|\leq K^2|A|^{1/2}\|A\|,\quad \sum_{\theta\in \Theta_{good}}|S_\theta|\geq (1-\rho)|S|,
\end{align*} and for every $\theta\in \Theta_{good}$, \begin{align*}
    \mathrm{Cap}_2(S_\theta)\leq \frac{K^{4}\|A\|^{2}}{\rho \alpha}|S_\theta|.
\end{align*} Fix $\theta\in \Theta_{good}$ and take $\varnothing\neq Y\subseteq S_\theta$; set $\delta:=|Y|/|S_\theta|$. By monotonicity of additive capacity, \begin{align*}
    \mathrm{Cap}_2(Y)\leq \mathrm{Cap}_2(S_\theta)\leq \frac{K^4 \|A\|^2}{\rho \alpha}|S_\theta|.
\end{align*} We now apply Lemma \ref{lem:richFibers} with $P=\mathbb{R}^d$, $c=0$, and $X=Y$. Since $d\geq 5$ and $|\mathcal{F}_{d,R^2}|\geq 2$, all the hypotheses of the lemma are satisfied. This gives some $z\in Y+Y$, $z\neq 0$, such that 
\[
    |Y\cap (z-Y)|\geq \frac{|Y|^2}{4\mathrm{Cap}_2(Y)}\geq \frac{|Y|^2 \rho \alpha}{4K^4\|A\|^2|S_\theta|}\gg_{d,\epsilon} \sigma^{d-3+\epsilon}\frac{\delta \rho \alpha}{\|A\|^2}|Y|.\qedhere
\]
\end{proof}

\section{Extensions and limitations}\label{sec:extensionsLimitations}

We briefly discuss extensions of this work. First, all spherical arguments will extend to any level set of any fixed positive-definite integral quadratic form (any ellipsoid with positive Gaussian curvature): the Bourgain-Demeter decoupling results apply to this setting, and the geometric-type results in \S\ref{sec:mainProofs} apply here in a similar manner as the sphere. More generally, for any compact $C^2$ hypersurface with positive-definite second fundamental form, Bourgain-Demeter \cite{bourgainDemeter2017} have decoupling estimates here that give cap concentration for Freiman models; Theorem \ref{thm:iteratedDimensionReduction} does not immediately follow in this setting, because the geometry used here is unique to quadrics. The discrete restriction estimates for forms of many variables of \cite{Cook_Hughes_Palsson_2023} can be applied to get a similar theorem as Theorem~\ref{thm:global}, but since a partition-decoupling inequality over these surfaces is not generally available, one cannot get the stronger local results here.

\section{AI Use Disclosure}

Insights from conversations with generative AI were used to develop several aspects of the paper. This included developing ideas, testing hypotheses, advice on organization, positioning, and novelty, and assisting with revisions. The authors take full responsibility for the content of the paper.

\appendix
\section{Periodic discrete form of decoupling inequality}

Given $\omega\in \mathbb{S}^{d-1}$, let the attached spherical cap at scale $\delta^{1/2}$ be defined \begin{align*}
    \kappa_\omega:=\{x\in \mathbb{S}^{d-1}:|x-\omega|\leq \delta^{1/2}\},
\end{align*} and define its associated neighborhood \begin{align*}
    N_\delta(\kappa_\omega):=\{y\in \mathbb{R}^d:|y-\kappa_\omega|\leq \delta\}.
\end{align*} One may cover $\mathbb{S}^{d-1}$ with $\asymp_d \delta^{-(d-1)/2}$ such neighborhoods with finite overlap; we will call such a covering $\mathcal{P}_\delta$. Applying this with $\delta= \sigma^2$ gives a collection of caps, each with tangential diameter $O_d(\sigma)$, with $|\mathcal{P}_{\sigma^2}|\asymp_d \sigma^{-(d-1)}$. We may now use these neighborhoods to partition the lattice sphere. For every $m\in \mathcal{F}_{d,R^2}$, the normalized point $m/R$ lies on $\mathbb{S}^{d-1}$, and hence belongs to at least one thickened cap $\theta\in \mathcal{P}_{\sigma^2}$. Assign $m$ to such a plate, making an arbitrary choice if necessary. For $\theta\in \mathcal{P}_{\sigma^2}$, define \begin{align*}
    V_\theta:=\{m\in \mathcal{F}_{d,R^2}:m\text{ is assigned to }\theta\}.
\end{align*} Then $\mathcal{F}_{d,R^2}=\bigsqcup_{\theta\in \mathcal{P}_{\sigma^2}}V_\theta$. We call the resulting partition a standard $\sigma$-cap partition of $\mathcal{F}_{d,R^2}$ (and note that some of the sets $V_\theta$ may be empty). Every $V_\theta$ lies in a spherical cap of angular radius $O_d(\sigma)$. For $S\subseteq \mathcal{F}_{d,R^2}$, we write $S_\theta:=S\cap V_\theta$.

We use the following specialization of \cite[Theorem 1.1]{BourgainDemeter2015}.

\begin{theorem}[Bourgain-Demeter, \cite{BourgainDemeter2015}] Suppose that $f$ has Fourier support in $N_\delta(\mathbb{S}^{d-1})$, and decompose \begin{align*}
    f=\sum_{\tau \in \mathcal{P}_\delta}f_\tau,
\end{align*} where the Fourier support of $f_\tau$ is contained in the $\delta$-neighborhood of the cap $\tau$, or a fixed enlargement of this neighborhood. Then, for \begin{align*}
    p\geq \frac{2(d+1)}{d-1}
\end{align*} and $\epsilon>0$, \begin{align}
    \|f\|_{L^p(\mathbb{R}^d)}\ll_{d,p,\epsilon} \delta^{-\frac{d-1}{4}+\frac{d+1}{2p}-\epsilon}\Big(\sum_{\tau \in \mathcal{P}_\delta}\|f_\tau\|_{L^p(\mathbb{R}^d)}^2\Big)^{1/2}.
\end{align}
    
\end{theorem}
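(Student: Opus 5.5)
The statement is a direct specialization of \cite[Theorem 1.1]{BourgainDemeter2015}, so the plan is to quote that theorem and check that its hypotheses and exponent match the ones written here. Bourgain--Demeter prove the $\ell^2$ decoupling inequality for the $\delta$-neighborhood of a compact hypersurface with positive-definite second fundamental form, which covers $\mathbb{S}^{d-1}$. Their constant is $\delta^{-\epsilon}$ in the range $2\leq p\leq p_c:=\frac{2(d+1)}{d-1}$, and $\delta^{-\frac{d-1}{4}+\frac{d+1}{2p}-\epsilon}$ for $p\geq p_c$. The first step is therefore to confirm that $\mathcal{P}_\delta$ is an admissible family of caps for their theorem. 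Each $N_\delta(\kappa_\omega)$ is a $\delta^{1/2}\times\cdots\times\delta^{1/2}\times\delta$ plate, and the covering has bounded overlap, so it is comparable to theirs. Replacing a plate by a fixed enlargement changes only the implicit constant: each enlarged plate meets $O_d(1)$ standard plates, and we can regroup.

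For completeness, and because it explains the exponent, I would also indicate how the range $p\geq p_c$ follows from the critical case $p=p_c$, where the constant is $\delta^{-\epsilon}$. At $p=\infty$ the triangle inequality and Cauchy--Schwarz give
\begin{align*}
\|f\|_{L^\infty}\leq \sum_{\tau}\|f_\tau\|_{L^\infty}\leq |\mathcal{P}_\delta|^{1/2}\Big(\sum_\tau\|f_\tau\|_{L^\infty}^2\Big)^{1/2},
\end{align*}
and $|\mathcal{P}_\delta|^{1/2}\asymp \delta^{-(d-1)/4}$. Interpolating between $p_c$ and $\infty$ gives the exponent
\begin{align*}
-\frac{d-1}{4}\Big(1-\frac{p_c}{p}\Big)=-\frac{d-1}{4}+\frac{d+1}{2p},
\end{align*}
which is exactly the claimed power, up to $\delta^{-\epsilon}$.

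The main obstacle is that the decoupling functional is not a linear operator, so Riesz--Thorin does not apply directly. I would follow the standard route used by Bourgain--Demeter themselves. First reduce to the case where the pieces $f_\tau$ have comparable $L^\infty$ norms, which is possible by dyadic pigeonholing of the wave packets at a cost of $\log(1/\delta)$ factors, absorbed into $\delta^{-\epsilon}$. Then interpolate the resulting inequality, using vector-valued interpolation for the map $(f_\tau)_\tau\mapsto\sum_\tau f_\tau$ from $\ell^2(L^p)$ to $L^p$, in its restricted form. In practice this part is handled by citation: \cite[Theorem 1.1]{BourgainDemeter2015} already states the full range $p\geq p_c$.
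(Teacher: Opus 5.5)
Your proposal is correct and matches the paper: it simply invokes \cite[Theorem 1.1]{BourgainDemeter2015}, whose second part already covers $p\geq \frac{2(d+1)}{d-1}$ with exactly this exponent, and it handles fixed enlargements by splitting each enlarged plate into $O_d(1)$ plates of comparable dimensions, which is your regrouping step. Your interpolation digression has the right exponent bookkeeping but is not needed, and it does not need pigeonholing either: choose smooth bumps $\psi_\tau$ equal to $1$ on the Fourier support of $f_\tau$, so that $f_\tau=f_\tau*\widecheck{\psi_\tau}$; then $(g_\tau)\mapsto\sum_\tau g_\tau*\widecheck{\psi_\tau}$ is a linear map, and mixed-norm Riesz--Thorin between $\ell^2(L^{p_c})$ and $\ell^2(L^\infty)$ applies to it directly.
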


The formulation with fixed enlargements follows from the stated theorem by decomposing each enlarged plate into $O_d(1)$ plates of comparable dimensions.

Taking $p=4$, this is admissible for $d\geq 3$, which with $\delta=\sigma^2$ gives \begin{align}\label{eq:decouplingInequalityForm1}    \|f\|_{L^4(\mathbb{R}^d)}\ll_{d,\epsilon}\sigma^{-\frac{d-3}{4}-\epsilon}\Big(\sum_{\theta\in \mathcal{P}_{\sigma^2}}\|f_\theta\|^2_{L^4(\mathbb{R}^d)}\Big)^{1/2}.
\end{align}

\begin{proof}[Proof of Theorem \ref{thm:blackBox}]

Normalize the lattice sphere down to the unit sphere and define \begin{align*}
    F(x):=\sum_{m\in S}a_me\Big(\frac{m}{R}\cdot x\Big),\quad x\in \mathbb{R}^d.
\end{align*} Similarly, set \begin{align*}
    F_\theta(x):=\sum_{m\in S_\theta}a_me\Big(\frac{m}{R}\cdot x\Big).
\end{align*} These functions are both $R\mathbb{Z}^d$-periodic, and $F=\sum_{\theta\in \Theta}F_\theta$. Choose a nonzero function $\rho\in C_c^\infty(\mathbb{R}^d)$ with support in the unit ball, and set \begin{align*}
    \rho_\eta(\xi):=\eta^{-d}\rho(\xi/\eta).
\end{align*} For $0<\eta\ll \sigma^2$, define functions $f_{\eta,\theta}$ by \begin{align*}
    \widehat{f_{\eta,\theta}}(\xi):=\sum_{m\in S_\theta}a_m\rho_\eta\Big(\xi-\frac{m}{R}\Big).
\end{align*} Fourier inversion gives \begin{align*}
    f_{\eta,\theta}(x)=\widecheck{\rho}(\eta x)F_\theta(x).
\end{align*} Set \begin{align*}
    f_\eta:=\sum_{\theta\in \Theta}f_{\eta,\theta}=\widecheck{\rho}(\eta x)F(x).
\end{align*} Each point $m/R$ lies on $\mathbb{S}^{d-1}$. Since $\eta\ll\sigma^2$, the Fourier support of $f_\eta$ is contained in $N_{\sigma^2}(\mathbb{S}^{d-1})$. Moreover, by the definition of the standard cap partition, the Fourier support of each $f_{\eta,\theta}$ lies in a fixed enlargement of the neighborhood indexed by $\theta$. We may therefore apply (\ref{eq:decouplingInequalityForm1}) to deduce that \begin{align}\label{eq:decouplingInequalityForm2}
    \|f_\eta\|_{L^4(\mathbb{R}^d)}\ll_{d,\epsilon}\sigma^{-\frac{d-3}{4}-\epsilon}\Big(\sum_{\theta\in \Theta}\|f_{\eta,\theta}\|_{L^4(\mathbb{R}^d)}^2\Big)^{1/2}.
\end{align}

We now use a periodization trick. If $H$ is $R\mathbb{Z}^d$-periodic and $w$ is an integrable continuous function, then \begin{align}\label{eq:periodicityTrick}
    \lim_{\eta\rightarrow 0}\eta^d\int_{\mathbb{R}^d}w(\eta x)|H(x)|^4dx=\Big(\int_{\mathbb{R}^d}w(u)du\Big)\frac{1}{R^d}\int_{[0,R]^d}|H(x)|^4dx.
\end{align} This follows from decomposing $\mathbb{R}^d$ into translates of $[0,R]^d$ and recognizing the resulting sum as a Riemann sum.

We apply (\ref{eq:periodicityTrick}) with $w=|\widecheck{\rho}|^4$, to obtain that \begin{align*}
    \lim_{\eta\rightarrow 0}\eta^d\|f_\eta\|_4^4=\Big(\int_{\mathbb{R}^d}|\widecheck{\rho}(u)|^4du\Big)\frac{1}{R^d}\int_{[0,R]^d}|F(x)|^4dx.
\end{align*} Changing variables $x=Rt$ then provides that \begin{align*}
    \frac{1}{R^d}\int_{[0,R]^d}|F(x)|^4dx=\int_{\mathbb{T}^d}\Big|\sum_{m\in S}a_me(m\cdot t)\Big|^4dt.
\end{align*} Thus, \begin{align}
    \lim_{\eta \rightarrow 0}\eta^{d/4}\|f_\eta\|_{L^4(\mathbb{R}^d)}=\|\widecheck{\rho}\|_{L^4(\mathbb{R}^d)}\Big\|\sum_{m\in S}a_me(m\cdot t)\Big\|_{L^4(\mathbb{T}^d,t)}.
\end{align} Multiplying both sides of (\ref{eq:decouplingInequalityForm2}) by $\eta^{d/4}$ and letting $\eta\rightarrow 0$ then provides that \begin{align}\label{eq:limit}
\|\widecheck{\rho}\|_{L^4(\mathbb{R}^d)}\Big\|\sum_{m\in S}a_me(m\cdot t)\Big\|_{L^4(\mathbb{T}^d,t)}\ll_{d,\epsilon}\sigma^{-\frac{d-3}{4}-\epsilon}\lim_{\eta\rightarrow 0}\eta^{d/4}\Big(\sum_{\theta\in \Theta}\|f_{\eta,\theta}\|_{L^4(\mathbb{R}^d)}^2\Big)^{1/2}.
\end{align}

We now evaluate $\eta^{d/4}\|f_{\eta,\theta}\|_{L^4(\mathbb{R}^d)}$ as $\eta\rightarrow 0$. We may evaluate \begin{align*}
    (\eta^{d/4}\|f_{\eta,\theta}\|_{L^4(\mathbb{R}^d)})^4&=\eta^d\int_{\mathbb{R}^d}|\widecheck{\rho}(\eta x)|^4|F_\theta(x)|^4dx \\ &= \eta^d\sum_{k\in \mathbb{Z}^d}\int_{R(k+\mathbb{T}^d)}|\widecheck{\rho}(\eta x)|^4|F_\theta(x)|^4dx \\ &=(R\eta)^d\sum_{k\in \mathbb{Z}^d}\int_{\mathbb{T}^d}|\widecheck{\rho}(\eta R(k+t))|^4|F_\theta(R(k+t))|^4dt \\ &= (R\eta)^d \int_{\mathbb{T}^d}|F_\theta(Rt)|^4\sum_{k\in \mathbb{Z}^d}|\widecheck{\rho}(\eta R(k+t))|^4dt,
\end{align*} where we used that the integral was absolutely convergent to justify interchanging sum and integral, and applied $R$-periodicity of $F_\theta$. Via Poisson summation, one may show that as $\eta\rightarrow 0$, \begin{align*}
    \eta^d\sum_{k\in \mathbb{Z}^d}|\widecheck{\rho}(\eta R(k+t))|^4\rightarrow R^{-d}\int_{\mathbb{R}^d}|\widecheck{\rho}(\xi)|^4d\xi
\end{align*} uniformly over $t$, and so since $F_\theta(R\cdot)$ is a trigonometric polynomial, we deduce that \begin{align*}
    \lim_{\eta\rightarrow 0}(\eta^{d/4}\|f_{\eta,\theta}\|_{L^4(\mathbb{R}^d)})^4=\Big(\int_{\mathbb{T}^d}|F_\theta(Rt)|^4dt\Big)\Big(\int_{\mathbb{R}^d}|\widecheck{\rho}(\xi)|^4d\xi\Big).
\end{align*} Taking fourth roots, applying this to the inequality (\ref{eq:limit}), and using that $\Theta$ is finite, we then deduce that \begin{align}
    \|\widecheck{\rho}\|_{L^4(\mathbb{R}^d)}\Big\|\sum_{m\in S}a_me(m\cdot t)\Big\|_{L^4(\mathbb{T}^d,t)}\ll_{d,\epsilon}\sigma^{-\frac{d-3}{4}-\epsilon}\|\widecheck{\rho}\|_{L^4(\mathbb{R}^d)}\Big(\sum_{\theta\in \Theta}\|F_\theta(R\cdot )\|_{L^4(\mathbb{T}^d)}^2\Big)^{1/2}.
\end{align} Cancelling $\|\widecheck{\rho}\|_4$ from both sides (as it is strictly positive) and using that $F_\theta(Rt)=\sum_{m\in S_\theta}a_me(m\cdot t)$ then provides the desired result.
\end{proof}

\bibliographystyle{abbrv}
\bibliography{bibliography}

\end{document}